\newcommand*\mathinhead[2]{\texorpdfstring{$#1$}{#2}}
\numberwithin{equation}{section}
\theoremstyle{plain}
\newtheorem*{thm*}{Theorem}
\newtheorem{thm}{Theorem}[section]
\newtheorem{lem}[thm]{Lemma}
\newtheorem{prop}[thm]{Proposition}
\newtheorem{cor}[thm]{Corollary}
\theoremstyle{definition}
\newtheorem{defn}{Definition}[section]
\theoremstyle{remark}
\newtheorem*{rem}{\textbf{Remark}}
\DeclareMathAlphabet{\pazocal}{OMS}{zplm}{m}{n}
\begin{document}
	\title[$(1/2,+)$-caloric capacity of Cantor sets]{On the $(1/2,+)$-caloric capacity of Cantor sets.}
	\author{Joan Hernández}\thanks{The author was supported by the grant
		PRE2021-098469 (Mineco, Spain).}
	\maketitle
	
	\begin{abstract}
		In the present paper we characterize the $(1/2,+)$-caloric capacity (associated with the $1/2$-fractional heat equation) of the usual corner-like Cantor set of $\mathbb{R}^{n+1}$. The results obtained for the latter are analogous to those found for Newtonian capacity. Moreover, we also characterize the BMO and $\text{Lip}_\alpha$ variants ($0<\alpha<1$) of the $1/2$-caloric capacity in terms of the Hausdorff contents $\pazocal{H}^n_\infty$ and $\pazocal{H}^{n+\alpha}_\infty$ respectively.
		
		\bigskip
		
		\noindent\textbf{AMS 2020 Mathematics Subject Classification:}  42B20 (primary); 28A12 (secondary).
		
		\medskip
		
		\noindent \textbf{Keywords:} Fractional heat equation, singular integrals, Cantor set.
	\end{abstract}

	\section{Introduction}
	\label{sec1}
	
	The work done by Mateu, Prat \& Tolsa in \cite{MPrTo} and subsequently by Mateu \& Prat in \cite{MPr}, motivates the study of caloric capacities in a similar manner as it has been done for classical objects such as analytic, harmonic or Newtonian capacities. In the former article, $(1,1/2)$-Lipschitz caloric capacity was introduced and, along with it, the notion of equivalence between the nullity of this quantity and the removability of compact subsets for the heat equation, i.e. the one associated with the differential operator $\Theta:=(-\Delta_x)+\partial_t$, where $(x,t)\in\mathbb{R}^n\times \mathbb{R}$. As one may suspect, the different nature of the \textit{spatial} and \textit{temporal} variables of the equation is key when formalizing the previous notions. Indeed, for a given compact subset $E\subset \mathbb{R}^{n+1}$ and $f$ a solution of the heat equation in $\mathbb{R}^{n+1}\setminus{E}$, that is $\Theta f = 0$ in $\mathbb{R}^{n+1}\setminus{E}$; the characterization of the removability of $E$ (i.e. the possibility to extend the solution to the whole $\mathbb{R}^{n+1}$) in terms of its caloric capacity, can be done as in \cite{MPrTo} if $f$ satisfies the following normalization conditions: 
	\begin{equation*}
		\|\nabla_x f\|_{L^{\infty}(\mathbb{R}^{n+1})}<\infty, \hspace{1cm} \|\partial_t^{1/2} f\|_{\ast, p}<\infty,
	\end{equation*}
	where the norm $\|\cdot\|_{\ast, p}$ stands for the usual BMO norm of $\mathbb{R}^{n+1}$ but computed with respect to \textit{parabolic} cubes. The choice to endow $\mathbb{R}^{n+1}$ with a parabolic metric topology, that is, the metric induced by the distance
    \begin{equation*}
        d_p\big( (x,t),(y,s) \big):=\max\{|x-y|,|t-s|^{1/2}\};
    \end{equation*}
    becomes natural in light of the results presented by Hofmann \cite[Lemma 1]{Ho} or Hofmann \& Lewis \cite[Theorem 7.4]{HoL}. Indeed, recall that a function $f(x,t)$ defined in $\mathbb{R}^{n+1}$ is Lip $1/2$ (or Hölder $1/2$) in the $t$ variable if
    \begin{equation*}
        \|f\|_{\text{Lip}_{1/2},t}:=\sup_{\substack{x\in \mathbb{R}^n\\ t,u\in \mathbb{R}, t\neq u}} \frac{|f(x,t)-f(x,u)|}{|t-u|^{1/2}}<\infty.
    \end{equation*}
    Then, the following estimate holds
	\begin{equation*}
		\|f\|_{\text{Lip}_{1/2},t}\lesssim \|\nabla_x f\|_{L^\infty(\mathbb{R}^{n+1})}+\|\partial_t^{1/2}f\|_{\ast,p}.
	\end{equation*}
	This implies that if $f$ satisfies the previously mentioned normalization conditions, it also satisfies a \textit{ $($1,1/2$\,)$-Lipschitz condition}, that in turn explains the name given to the caloric capacity presented in \cite{MPrTo}.\medskip\\
	In the footsteps of \cite{MPrTo}, Mateu \& Prat studied the corresponding caloric capacities associated with the fractional heat equation in \cite{MPr}. That is, the equation associated with the pseudo-differential operator $\Theta^s:=(-\Delta_x)^s+\partial_t$, for $0<s<1$. In such paper, the authors carried out their analysis distinguishing the cases $s=1/2$, $1/2<s<1$ and $0<s<1/2$, focusing mainly on the first. The study of the second and third cases turned out to be rather technical and cumbersome, and it resulted in the possibility to only obtain the value of the critical dimension of the capacity for the second case, and the bound from above by a certain $s$-parabolic Hausdorff content for the third. Nevertheless, the study of the first case was quite fruitful, deducing a similar removability result for the $\Theta^{1/2}$-equation, as it was done for the genuine heat equation. For instance, if $f$ is a solution of the $1/2$-heat equation in $\mathbb{R}^{n+1}\setminus{E}$ satisfying
	\begin{equation*}
		\|f\|_{L^\infty(\mathbb{R}^{n+1})}<\infty,
	\end{equation*}
	then $E$ will be removable if and only if the $1/2$-caloric capacity of $E$ is null. Notice that a particularity of the choice $s=1/2$ is that the spatial and temporal variables need no longer to be distinguished in the normalization condition. In addition, the proper topology to endow $\mathbb{R}^{n+1}$ also becomes the usual euclidean topology. All in all, the study of this case is simplified and, in fact, additional results can be obtained in the planar setting $(n=1)$, such as the \textit{non-comparability} of the $1/2$-caloric capacity with analytic and Newtonian capacities, despite that the three share critical dimension 1.\medskip\\
	In the present paper we aim at obtaining further results for the case $s=1/2$ and give a more precise description of the $1/2$-caloric capacity, once its definition is restricted to positive Borel regular measures. Such version will be referred to as ($1/2,+$)-caloric capacity. For instance, Section \ref{sec4} is devoted to the estimation of the ($1/2,+$)-caloric capacity of the usual corner-like Cantor set of $\mathbb{R}^{n+1}$, that finally yields Corollary \ref{cor4.3}. The behavior obtained is similar to that described by Eiderman in \cite{Ei} for radial nonnegative kernels. In our setting, the kernel will be nonnegative, but not radial. Nevertheless, to circumvent such inconvenience, the author compares the usual ($1/2,+$)-caloric capacity with an auxiliary one, defined also through a nonnegative kernel but lacking a temporal indicator function. Such feature turns out to be essential to deduce, in a rather straightforward manner, the desired estimate. In fact, it also yields the comparability of the ($1/2,+$)-caloric capacity with the analogous capacity associated with the \textit{conjugate} operator (and thus conjugate equation),
	\begin{equation*}
		\overline{\Theta}^{1/2}:=(-\Delta_x)^{1/2}-\partial_t.
	\end{equation*}
	In Sections \ref{sec5} and \ref{sec6} we characterize the BMO and $\text{Lip}_\alpha$ variants ($0<\alpha<1$) of the $1/2$-caloric capacity, obtaining in Theorems \ref{thm5.3} and \ref{thm6.4} their comparability with $\pazocal{H}^n_\infty$ and $\pazocal{H}^{n+\alpha}_\infty$ respectively. The previous study has been motivated by the one carried out for the BMO variant of analytic capacity by Astala, Iwaniec \& Martin in \cite[\textsection 13.5.1]{AIMar}, which in turn was inspired by \cite{Ka}; and that for the $\text{Lip}_\alpha$ variant of the same capacity presented by Mel'nikov \cite{Me}.\newpage
	\textit{About the notation used in this text}: as usual, the letter $C$ stands for an absolute positive constant that can depend on the dimension of the ambient space, and whose value may change at different occurrences. The notation $A\lesssim B$ means that there exists a positive absolute (dimensional) constant, such that $A\leq CB$. Moreover, $A\approx B$ is equivalent to $A\lesssim B \lesssim A$. Also, $A \simeq B$ will mean $A= CB$. We also emphasize that the gradient symbol $\nabla$ will refer to $(\nabla_x,\partial_t)$, with $x\in \mathbb{R}^n$ and $t\in \mathbb{R}$. 
	
	\bigskip
	\section{Notation and preliminary results}
	\label{sec2}
	\bigskip
	
	Our ambient space will be $\mathbb{R}^{n+1}$, and a generic point will be denoted as $\overline{x}=(x,t)\in\mathbb{R}^{n+1}$, where $x\in\mathbb{R}^n$ will be usually referred to as the \textit{spatial} variable, and $t\in \mathbb{R}$ the \textit{time} variable. Let $\Theta^{1/2}$ be the \textit{$1/2$-heat operator}, that is
	\begin{equation*}
		\Theta^{1/2} := (-\Delta)^{1/2} + \partial_t,
	\end{equation*}
	where $(-\Delta)^{1/2}=(-\Delta_x)^{1/2}$ is a pseudo-differential operator known as the \textit{$1/2$-Laplacian} with respect to the spatial variable. It may be defined through its Fourier transform,
	\begin{equation*}
		\widehat{(-\Delta)^{1/2}}f(\xi,t)=|\xi|\widehat{f}(\xi,t),
	\end{equation*}
	or by its integral representation
	\begin{align*}
		(-\Delta)^{1/2} f(x,t)&\simeq \text{p.v.}\int_{\mathbb{R}^n}\frac{f(x,t)-f(y,t)}{|x-y|^{n+1}}\text{d}\pazocal{L}^{n}(y)  \\
		&\simeq \int_{\mathbb{R}^n}\frac{f(x+y,t)-2f(x,t)+f(x-y,t)}{|y|^{n+1}}\text{d}\pazocal{L}^n(y). 
	\end{align*}
	The reader may find more details about the properties of such operator in \cite[\textsection{3}]{DPV} or \cite{St}. Borrowing the notation of \cite{MPr}, let $P$ be the fundamental solution of the $1/2$-heat equation in $\mathbb{R}^{n+1}$, which is given by \cite[Eq. 2.2]{Va}
	\begin{equation*}
		P(\overline{x})=\frac{t}{\big( t^2+|x|^2 \big)^{(n+1)/2}}\, \chi_{\{t>0\}}(\overline{x}),
	\end{equation*}
	where $\chi$ is the usual indicator function. For short, such expression will be simply written as
	\begin{equation*}
		P(\overline{x})=\frac{t}{|\overline{x}|^{n+1}}\, \chi_{\{t>0\}}.
	\end{equation*}
	Notice that the previous kernel is not differentiable at any point of the form $(x,0)$. Another fundamental function that will appear in the sequel is $P^{\ast}$,
	\begin{equation*}
		P^{\ast}(\overline{x}):=P(-\overline{x})= \frac{-t}{|\overline{x}|^{n+1}}\, \chi_{\{t<0\}}.
	\end{equation*}
	Observe that, on the one hand,
	\begin{align*}
		(-\Delta)^{1/2}&P^{\ast}(\overline{x})=C'\int_{\mathbb{R}^n}\frac{P^{\ast}(x+y,t)-2P^{\ast}(x,t)+P^{\ast}(x-y,t)}{|y|^{n+1}}\text{d}\pazocal{L}^n(y)\\
		&\hspace{-0.2cm}=C'\int_{\mathbb{R}^n}\frac{P(-x-y,-t)-2P(-x,-t)+P(-x+y,-t)}{|y|^{n+1}}\text{d}\pazocal{L}^n(y) =(-\Delta)^{1/2}P(-\overline{x}),
	\end{align*}
	while on the other,
	\begin{equation*}
		\partial_tP^{\ast}(\overline{x})=-\partial_t P(-\overline{x}).
	\end{equation*}
	Therefore, if we define the operator
	\begin{equation*}
		\overline{\Theta}^{1/2} := (-\Delta)^{1/2} - \partial_t,
	\end{equation*}
	we have that
	\begin{equation*}
		\overline{\Theta}^{1/2}P^{\ast}(\overline{x})= \Theta^{1/2}P(-\overline{x}),
	\end{equation*}
	implying that $P^{\ast}$ is the fundamental solution of $\overline{\Theta}^{1/2}$.
	
	\bigskip
	
	\begin{defn}[$1/2$-caloric capacity]
		\label{def2.1}
		Given a compact subset $E\subset \mathbb{R}^{n+1}$, define its \textit{$1/2$-caloric capacity} as
		\begin{equation*}
			\gamma_{\Theta^{1/2}}(E)=\sup |\langle T, 1 \rangle| ,
		\end{equation*}
		where the supremum is taken over all distributions $T$ with $\text{supp}(T)\subseteq E$ satisfying
		\begin{equation*}
			\|P\ast T\|_{\infty}:=\|P\ast T\|_{L^\infty(\mathbb{R}^{n+1})} \leq 1.
		\end{equation*}
		Such distributions will be called \textit{admissible for $\gamma_{\Theta^{1/2}}(E)$}.
	\end{defn}

    We also define the ($1/2,+$)-\textit{caloric capacity}, denoted by $\gamma_{\Theta^{1/2},+}$, in the same way as $\gamma_{\Theta^{1/2}}$, but with the supremum only taken over positive Borel regular measures $\mu$ with $\text{supp}(\mu)\subseteq E$ and such that $\|P\ast \mu\|_{\infty} \leq 1$.\medskip\\
    We shall also introduce yet another variant of the previous capacity, $\widetilde{\gamma}_{\Theta^{1/2},+}$, that will be referred to as ($1/2,+$)\textit{-symmetric caloric capacity}. Admissible measures for $\widetilde{\gamma}_{\Theta^{1/2},+}$ must also satisfy $\|P^{\ast}\ast \mu\|_{\infty} \leq 1$ as well as an $n$-growth condition with constant 1. Recall that a Borel measure $\mu$ in $\mathbb{R}^{n+1}$ has $s$-\textit{growth \text{\normalfont{(}}with constant $C$\text{\normalfont{)}}} if there exists some absolute constant $C>0$ such that
	\begin{equation*}
		\mu\big( B(\overline{x},r) \big) \leq Cr^s, \hspace{0.5cm} \text{for all }\, \overline{x}\in \mathbb{R}^{n+1},\, r>0.
	\end{equation*}
	It is clear that this property is invariant if formulated using cubes instead of balls. In the sequel we shall present a result that justifies the name given to $\widetilde{\gamma}_{\Theta^{1/2},+}$. Also, it is straightforward that 
	\begin{equation*}
		\widetilde{\gamma}_{\Theta^{1/2},+}(E)\leq \gamma_{\Theta^{1/2},+}(E)\leq \gamma_{\Theta^{1/2}}(E).
	\end{equation*}
	We have analogous definitions associated with the operator $\overline{\Theta}^{1/2}$, giving rise to the objects $\gamma_{\overline{\Theta}^{1/2}}, \gamma_{\overline{\Theta}^{1/2},+}$ and $\widetilde{\gamma}_{\overline{\Theta}^{1/2},+}$. Moreover, it is usual to extend all of the above definitions to a greater variety of sets. Namely, if $E\subseteq \mathbb{R}^{n+1}$ is any Borel set,
	\begin{equation*}
		\gamma_{\Theta^{1/2}}(E):=\sup_{\substack{K\subseteq E\\ K \text{compact}}} \gamma_{\Theta^{1/2}}(K),
	\end{equation*}
	and similarly for the rest of capacities. \medskip\\
	The main properties of $\gamma_{\Theta^{1/2}}$ regarding localization and comparability to the Hausdorff measure are exhaustively covered in \cite{MPr}. We highlight \cite[Theorem 4.2]{MPr}, that concerns the equivalence between the null sets for the $\gamma_{\Theta^{1/2}}$ capacity and the removable sets for the $\Theta^{1/2}$-equation. More precisely:
	
	\begin{defn}[$1/2$-caloric removability]
		\label{def2.2}
		A compact subset $E\subset \mathbb{R}^{n+1}$ is said to be \textit{$1/2$-caloric removable} if any bounded function $f:\mathbb{R}^{n+1}\to \mathbb{R}$ satisfying the $1/2$-heat equation in $\mathbb{R}^{n+1}\setminus{E}$, also satisfies the same equation in $E$.
	\end{defn}
	
	\begin{thm}{\normalfont{(\cite[Theorem 4.2]{MPr})}\textbf{.}}
		\label{thm2.1}
		A compact subset $E\subset \mathbb{R}^{n+1}$ is $1/2$-caloric removable if and only if $\gamma_{\Theta^{1/2}}(E)=0$.
	\end{thm}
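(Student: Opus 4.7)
The plan is to follow the classical Painlevé-type blueprint: the defining functional of $\gamma_{\Theta^{1/2}}$ pairs an admissible distribution with the constant $1$, so vanishing of the capacity is exactly what rules out nontrivial bounded solutions of $\Theta^{1/2} u = 0$ with singular support on $E$. The easy direction is $(\gamma_{\Theta^{1/2}}(E) > 0 \Rightarrow E \text{ not removable})$: fix an admissible distribution $T$ with $\langle T, 1\rangle \neq 0$; then $f := P * T$ is bounded (by admissibility) and satisfies $\Theta^{1/2} f = T$ distributionally, so $\Theta^{1/2} f = 0$ on $\mathbb{R}^{n+1} \setminus E$. If $E$ were $1/2$-caloric removable, the equation would extend to all of $\mathbb{R}^{n+1}$, yielding $T \equiv 0$ and contradicting $\langle T, 1 \rangle \neq 0$.

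For the converse, assume $\gamma_{\Theta^{1/2}}(E) = 0$ and let $f \in L^\infty(\mathbb{R}^{n+1})$ satisfy $\Theta^{1/2} f = 0$ in $\mathbb{R}^{n+1} \setminus E$. The distribution $T := \Theta^{1/2} f$ is supported in $E$, and the goal is $T \equiv 0$. Testing against an arbitrary $\varphi \in C_c^\infty(\mathbb{R}^{n+1})$ reduces this to showing $\langle \varphi T, 1 \rangle = 0$, which in turn follows once one exhibits $\varphi T$ as (a multiple of) an admissible distribution for $\gamma_{\Theta^{1/2}}(E)$, that is, $\|P * (\varphi T)\|_\infty < \infty$. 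Using the commutator identity $\varphi\, \Theta^{1/2} f = \Theta^{1/2}(\varphi f) - [\Theta^{1/2}, \varphi] f$, with
\begin{equation*}
[\Theta^{1/2}, \varphi] g = [(-\Delta)^{1/2}, \varphi] g + (\partial_t \varphi)\, g,
\end{equation*}
together with the fundamental-solution property $P * \Theta^{1/2}(\varphi f) = \varphi f$ (valid distributionally since $\varphi f$ has compact support), convolving against $P$ gives
\begin{equation*}
P * (\varphi T) = \varphi f - P * \bigl( [\Theta^{1/2}, \varphi] f \bigr),
\end{equation*}
whose first summand is manifestly bounded.

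The main obstacle is therefore to bound the commutator term globally. The mechanism is that $[(-\Delta)^{1/2}, \varphi]$ is morally an operator of order zero: using the integral representation of $(-\Delta)^{1/2}$ together with the smoothness and compact support of $\varphi$, one checks that $[(-\Delta)^{1/2}, \varphi] f$ is bounded and decays like $|\overline{x}|^{-(n+1)}$ at spatial infinity, while $(\partial_t \varphi) f$ is bounded with compact support. The final estimate is the global $L^\infty$-boundedness of convolution of such a function with the non-integrable kernel $P(\overline{x}) = t\,|\overline{x}|^{-(n+1)} \chi_{\{t>0\}}$, carried out by splitting the integral into local and far-field contributions and exploiting the decay just obtained; this is the technical heart of the argument, since $P$ only has $n$-dimensional pointwise decay and large-$t$ integrability has to be extracted from the decay of the integrand rather than from $P$ itself. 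Once $\|P * (\varphi T)\|_\infty < \infty$ is secured, a suitable rescaling of $\varphi T$ is admissible for $\gamma_{\Theta^{1/2}}(E)$, so the hypothesis forces $\langle T, \varphi \rangle = 0$ for every $\varphi$; hence $T \equiv 0$ and $f$ solves the $1/2$-heat equation on the whole of $\mathbb{R}^{n+1}$, proving that $E$ is $1/2$-caloric removable.
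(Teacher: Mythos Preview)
The paper does not prove Theorem~\ref{thm2.1}; it is stated as a citation of \cite[Theorem~4.2]{MPr} and no argument is given here. Your proposal is therefore not competing against a proof in the present paper but rather sketching the argument that \cite{MPr} carries out.

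That said, your outline is the correct Painlev\'e-type strategy and matches the approach of \cite{MPr}. The easy direction is exactly as you describe. For the hard direction, the reduction to localization---showing $\|P\ast(\varphi T)\|_\infty<\infty$ for every test function $\varphi$---is indeed the crux, and your commutator decomposition
\[
P\ast(\varphi T)=\varphi f - P\ast\big([(-\Delta)^{1/2},\varphi]f\big)-P\ast\big((\partial_t\varphi)f\big)
\]
is the right identity. Two small points of care: first, the commutator $[(-\Delta)^{1/2},\varphi]f$ is supported only in the temporal slab $\mathbb{R}^n\times I_Q$ (where $I_Q$ is the $t$-projection of $\text{supp}(\varphi)$) and decays like $|x|^{-(n+1)}$ in the \emph{spatial} variable---not $|\overline{x}|^{-(n+1)}$ as you wrote; this distinction matters when you convolve with $P$, because the temporal compact support is what lets you control the integral for large $t$. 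Second, the identity $P\ast\Theta^{1/2}(\varphi f)=\varphi f$ requires interpreting $\Theta^{1/2}(\varphi f)$ distributionally and using that $\overline{\Theta}^{1/2}\psi\in L^1(\mathbb{R}^{n+1})$ for test functions $\psi$ (cf.\ the remark in the proof of Proposition~\ref{prop3.1}.\textit{3}, which points to \cite[\S3]{MPr}). With these clarifications your sketch is sound; the detailed estimates you flag as the ``technical heart'' are precisely the localization bounds of \cite[\S3]{MPr}, which also underlie Theorem~\ref{thm2.2} of the present paper.
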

	
	Moreover, the $1/2$-caloric capacity of a subset is tightly related to a certain Hausdorff content of the latter. Such relation is conditioned, in particular, by a growth restriction that admissible distributions for $\gamma_{\Theta^{1/2}}$ must satisfy.
	
	\begin{thm}
		\label{thm2.2}
		Let $T$ be a distribution in $\mathbb{R}^{n+1}$ with $\|P\ast T\|_\infty\leq 1$. If $\varphi$ is a $\pazocal{C}^1$ function supported on $Q\subset \mathbb{R}^{n+1}$ with $\|\nabla \varphi\|_\infty \leq \ell(Q)^{-1}$, then
		\begin{equation*}
			|\langle T, \varphi \rangle |\leq C \ell(Q)^n,
		\end{equation*}
		for some absolute dimensional constant $C>0$.
	\end{thm}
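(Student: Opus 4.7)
The plan is to exploit that $P^{\ast}$ is the fundamental solution of the conjugate operator $\overline{\Theta}^{1/2}$, as established earlier in the section. Formally this means $\overline{\Theta}^{1/2}P^{\ast}=\delta_0$, so that for any test function $\varphi$ one has the representation $\varphi = P^{\ast}\ast(\overline{\Theta}^{1/2}\varphi)$. Pairing with $T$ and dualizing the convolution yields
\begin{equation*}
    \langle T,\varphi\rangle = \langle T, P^{\ast}\ast \overline{\Theta}^{1/2}\varphi\rangle = \langle P\ast T,\overline{\Theta}^{1/2}\varphi\rangle.
\end{equation*}
Since $P\ast T\in L^{\infty}$ with norm at most $1$, and $\overline{\Theta}^{1/2}\varphi = (-\Delta)^{1/2}\varphi - \partial_t\varphi$ will turn out to lie in $L^1(\mathbb{R}^{n+1})$, the pairing is an honest integral and the problem reduces to proving
\begin{equation*}
    \|\overline{\Theta}^{1/2}\varphi\|_{L^1(\mathbb{R}^{n+1})} \leq \|(-\Delta)^{1/2}\varphi\|_{L^1} + \|\partial_t\varphi\|_{L^1} \lesssim \ell(Q)^n.
\end{equation*}

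The time-derivative part is immediate: $\|\partial_t\varphi\|_{L^1}\leq \|\partial_t\varphi\|_{\infty}\,|Q|\leq \ell(Q)^{-1}\cdot \ell(Q)^{n+1}=\ell(Q)^n$. For the spatial part, I would freeze the time variable and use, for each fixed $t$, the Riesz-transform factorization $(-\Delta)^{1/2}\varphi(\cdot,t) = \sum_{j=1}^n R_j\,\partial_{x_j}\varphi(\cdot,t)$ on $\mathbb{R}^n$. For each $t$, the function $g_j(\cdot,t):=\partial_{x_j}\varphi(\cdot,t)$ is continuous, supported on the spatial slice of $Q$ (an $n$-dimensional cube of side $\ell:=\ell(Q)$), satisfies $\|g_j(\cdot,t)\|_{\infty}\leq 1/\ell$, and, crucially, has vanishing integral over $\mathbb{R}^n$ since $\varphi$ is compactly supported in the spatial variable.

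The key technical step is thus the following $L^1$-bound: for any continuous $g\colon\mathbb{R}^n\to\mathbb{R}$ with $\mathrm{supp}\,g\subset B(0,\ell)$, $\|g\|_{\infty}\leq 1/\ell$ and $\int_{\mathbb{R}^n}g=0$, one should have $\|R_jg\|_{L^1(\mathbb{R}^n)}\lesssim \ell^{n-1}$. I would split $\mathbb{R}^n=\{|x|\leq 2\ell\}\cup\{|x|>2\ell\}$; on the inner region the $L^2$-boundedness of $R_j$ combined with Cauchy--Schwarz and the elementary estimate $\|g\|_{L^2}^2\leq \|g\|_{\infty}\|g\|_{L^1}\lesssim \ell^{n-2}$ yields a contribution $\lesssim \ell^{n-1}$; on the outer region the mean-zero hypothesis lets one subtract the Riesz kernel evaluated at a base point and Taylor-expand, gaining a factor $\ell/|x|$ and producing the pointwise bound $|R_jg(x)|\lesssim \ell\,\|g\|_{L^1}|x|^{-n-1}$, which also integrates to $\lesssim \ell^{n-1}$. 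Integrating in $t$ (which costs only a factor $\ell$ from the $t$-support of $\varphi$) finally gives $\|(-\Delta)^{1/2}\varphi\|_{L^1(\mathbb{R}^{n+1})}\lesssim \ell^n$. The principal obstacle is precisely this $L^1$ estimate, since Riesz transforms fail to be bounded on $L^1$; the mean-zero property inherited from the compact support of $\varphi$ is what rescues the argument.
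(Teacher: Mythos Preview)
The paper does not include its own proof of this statement (it is quoted from \cite{MPr}); your argument is correct and is exactly the $L^\infty$ specialization of the method the paper later uses for the BMO and $\text{Lip}_\alpha$ analogues (Theorems~\ref{thm5.1} and~\ref{thm6.1}): one writes $\langle T,\varphi\rangle=\langle P\ast T,\overline{\Theta}^{1/2}\varphi\rangle$, bounds the $\partial_t\varphi$ contribution directly, factors $(-\Delta)^{1/2}\varphi=\sum_j R_j\partial_{x_j}\varphi$, uses the $L^2$-boundedness of the Riesz transforms on the region near $Q$, and exploits the vanishing spatial integral of $\partial_{x_j}\varphi$ together with the Calder\'on--Zygmund smoothness of the Riesz kernel far from $Q$. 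Your presentation is in fact slightly cleaner than the paper's BMO proof because the $L^\infty$ hypothesis lets you estimate $\|\overline{\Theta}^{1/2}\varphi\|_{L^1}$ outright, with no need to subtract averages or invoke John--Nirenberg.
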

	
	If the previous property holds for a distribution $T$, we say that $T$ has $n$-\textit{growth} (\textit{with constant $C$}). It can be checked that this definition agrees with the usual definition of $n$-growth if $T$ is a Borel measure.
	
	\begin{thm}
		\label{thm2.3}
		For every compact subset $E\subset \mathbb{R}^{n+1}$,
		\begin{equation*}
			\gamma_{\Theta^{1/2}}(E)\lesssim \pazocal{H}^{n}_{\infty}(E).
		\end{equation*}
		Moreover, if $\text{dim}_{\pazocal{H}}(E)>n$, then $\gamma_{\Theta^{1/2}}(E)>0$.
	\end{thm}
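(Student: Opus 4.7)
My plan is to handle the two assertions separately. The upper bound $\gamma_{\Theta^{1/2}}(E)\lesssim \pazocal{H}^n_\infty(E)$ rests on the $n$-growth estimate of Theorem \ref{thm2.2} applied to a partition of unity adapted to an efficient cover of $E$; the positivity claim under the assumption $\text{dim}_{\pazocal{H}}(E)>n$ is a standard Frostman argument combined with a pointwise bound on $P\ast\mu$.

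For the upper bound, fix $\varepsilon>0$ and cover $E$ by cubes $\{Q_i\}$ with $\sum_i \ell(Q_i)^n<\pazocal{H}^n_\infty(E)+\varepsilon$. I would then construct a smooth partition of unity $\{\varphi_i\}$ subordinate to a slight dilation of $\{Q_i\}$, with bounded overlap, $\|\nabla\varphi_i\|_\infty\lesssim \ell(Q_i)^{-1}$, and $\sum_i \varphi_i\equiv 1$ on a neighborhood of $E$. For any admissible $T$ for $\gamma_{\Theta^{1/2}}(E)$, Theorem \ref{thm2.2} applied to each $\varphi_i$ (after absorbing the gradient constant into the conclusion) yields $|\langle T,\varphi_i\rangle|\lesssim \ell(Q_i)^n$, so the series $\sum_i \langle T,\varphi_i\rangle$ converges absolutely. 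Since $\text{supp}(T)\subseteq E$ and $\sum_i \varphi_i\equiv 1$ on a neighborhood of $\text{supp}(T)$, one has
\begin{equation*}
    \langle T,1\rangle = \sum_i \langle T,\varphi_i\rangle,
\end{equation*}
hence $|\langle T,1\rangle|\lesssim \pazocal{H}^n_\infty(E)+\varepsilon$. Letting $\varepsilon\to 0$ and taking the supremum over admissible $T$ completes this part.

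For the positivity claim, pick $s\in (n,\text{dim}_{\pazocal{H}}(E))$ and invoke Frostman's lemma to produce a nontrivial positive Borel measure $\mu$ supported on $E$ with $\mu(B(\overline{x},r))\leq r^s$ for every $\overline{x}\in\mathbb{R}^{n+1}$ and $r>0$. Since $|P(\overline{x})|\leq |\overline{x}|^{-n}$ directly from its defining formula,
\begin{equation*}
    |P\ast\mu(\overline{y})|\leq \int \frac{d\mu(\overline{x})}{|\overline{y}-\overline{x}|^n}.
\end{equation*}
Splitting the integrand into the far region $\{|\overline{y}-\overline{x}|\geq 1\}$ (bounded by $\mu(E)$) and a dyadic decomposition of the near region, the $s$-growth of $\mu$ gives a bound $\lesssim \sum_{k\geq 0}2^{-k(s-n)}<\infty$ since $s>n$. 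Therefore $\|P\ast\mu\|_\infty<\infty$, and a suitable rescaling of $\mu$ is admissible for $\gamma_{\Theta^{1/2}}(E)$, forcing $\gamma_{\Theta^{1/2}}(E)>0$.

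The main obstacle lies in the first assertion: one must arrange the cube cover and its associated partition of unity so that the precise gradient normalization required by Theorem \ref{thm2.2} holds term-by-term with a uniform constant, and the bounded overlap does not introduce a multiplicative factor that would prevent the limit $\varepsilon\to 0$. The lower bound is comparatively routine once Frostman's lemma is in place; the only mildly subtle point is that $P$ is not smooth across $\{t=0\}$, though this plays no role in the crude pointwise estimate $|P(\overline{x})|\leq |\overline{x}|^{-n}$ used above.
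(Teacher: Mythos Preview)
Your proposal is correct and follows the standard route. The paper does not give its own proof of Theorem~\ref{thm2.3} (it is quoted as a preliminary from \cite{MPr}), but the identical method appears in the paper's proofs of the analogous Theorems~\ref{thm5.3} and~\ref{thm6.4}: a Harvey--Polking partition of unity \cite{HPo} subordinate to a finite cube cover extracted by compactness---which resolves your stated concern about gradient normalization and overlap---together with the growth estimate for the upper bound, and Frostman's lemma with the pointwise bound $|P(\overline{x})|\leq |\overline{x}|^{-n}$ for the positivity.
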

	
	From the previous results we infer that the only possible candidate for the critical dimension of $\gamma_{\Theta^{1/2}}$ is $n$. And, in fact, this is the case, since there exist examples of compact subsets with positive $n$-dimensional Hausdorff measure, and some of them are $1/2$-caloric removable \cite[\textsection{5}]{MPr}, while other are not (they can be obtained as a consequence of \cite[Theorem 4.3]{MPr} via subsets of graphs of Lipschitz functions with positive $\pazocal{H}^n$-measure).\medskip\\
	One may also ask if for the planar case ($n=1$), the capacity $\gamma_{\Theta^{1/2}}$ is comparable to analytic capacity or Newtonian capacity, two classical objects related to complex analysis and potential theory sharing the same critical dimension with $\gamma_{\Theta^{1/2}}$. By \cite[Proposition 6.1]{MPr} we see that this is not the case, since there exist subsets of $\mathbb{R}^2$ with positive $\gamma_{\Theta^{1/2}}$ and null Newtonian capacity (horizontal line segments), and null $\gamma_{\Theta^{1/2}}$ but positive analytic capacity (vertical line segments).
	
	\bigskip
	\section{Properties of \mathinhead{\gamma_{\Theta^{1/2},+}}{}}
	\label{sec3}
	\bigskip
	
	In the sequel we will be concerned with estimating the ($1/2,+$)-caloric capacity of a family of generalized Cantor sets of $\mathbb{R}^{n+1}$. Previous to that, we shall present some important features of $\gamma_{\Theta^{1/2},+}$ that we consider of their own interest.
	
	\begin{prop} 
		\label{prop3.1}
		Let $E\subset \mathbb{R}^{n+1}$ be a Borel subset and $\lambda>0, \tau\in \mathbb{R}^{n+1}$. Set $\tau(E):= E+\tau$ and denote by $\lambda E$ the dilation of $E$ by $\lambda$. The following identities hold:
		\begin{enumerate}
			\itemsep-0.25em
			\item[\textit{1}.] Translation invariance: $\gamma_{\Theta^{1/2},+}(E)=\gamma_{\Theta^{1/2},+}(\tau(E))$.
			\item[\textit{2}.] $\gamma_{\Theta^{1/2},+}(\lambda E)=\lambda^n\gamma_{\Theta^{1/2},+}(E)$.
			\item[\textit{3}.] Outer regularity: If $(E_k)_k$ is a nested sequence of compact subsets of $\mathbb{R}^{n+1}$ that decreases to $\pazocal{E}:=\cap_{k=1}^\infty E_k$,
			\begin{equation*}
				\lim_{k\to\infty}\gamma_{\Theta^{1/2},+}(E_k)=\gamma_{\Theta^{1/2},+}(\pazocal{E}).
			\end{equation*}
			\item[\textit{4}.] Countable subadditivity: Let $E_1,E_2,\ldots$ be disjoint Borel subsets of $\mathbb{R}^{n+1}$. Then,
			\begin{equation*}
				\gamma_{\Theta^{1/2},+}\Bigg( \bigcup_{j=1}^\infty E_j \Bigg)\leq \sum_{j=1}^{\infty} \gamma_{\Theta^{1/2},+}(E_j).
			\end{equation*}
		\end{enumerate}
		
	\end{prop}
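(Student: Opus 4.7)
The first two claims follow from routine manipulations of admissible measures. For (1), the pushforward of an admissible $\mu$ under the translation $\overline{x}\mapsto \overline{x}+\tau$ is supported on $\tau(E)$, has the same total mass, and has $P$-potential $(P\ast\mu)(\cdot-\tau)$, so its uniform bound is preserved; taking suprema in both directions yields equality. For (2), the key point is that $P$ is homogeneous of degree $-n$, since $t/|\overline{x}|^{n+1}$ has homogeneity $1-(n+1)=-n$. Pushing $\mu$ forward under $\overline{x}\mapsto \lambda\overline{x}$ and rescaling by $\lambda^n$ yields an admissible measure on $\lambda E$ of mass $\lambda^n\mu(E)$; running the argument in reverse gives the matching upper bound.

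For the countable subadditivity (4), the positivity of both the kernel $P$ and the admissible measures makes the argument direct. By the definition on Borel sets, it suffices to prove the inequality for any admissible $\mu$ on a compact $K\subseteq \bigcup_j E_j$. Decompose $\mu = \sum_j \mu|_{E_j}$; each summand is a nonnegative Borel measure supported in $E_j$ whose potential is dominated pointwise by $P\ast\mu\leq 1$ (precisely because $P\geq 0$). Inner regularity of $\mu|_{E_j}$ over compact subsets of $E_j$ then gives $\mu|_{E_j}(E_j)\leq \gamma_{\Theta^{1/2},+}(E_j)$, and summation concludes.

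The delicate claim is the outer regularity (3). Monotonicity of the capacity under inclusion shows $\gamma_{\Theta^{1/2},+}(E_k)$ is decreasing and bounded below by $\gamma_{\Theta^{1/2},+}(\mathcal{E})$. For the reverse inequality, pick admissible $\mu_k$ on $E_k$ with $\mu_k(E_k)\geq \gamma_{\Theta^{1/2},+}(E_k)-1/k$; these live in the fixed compact $E_1$ with uniformly bounded mass, so Banach--Alaoglu extracts a narrowly convergent subsequence $\mu_{k_j}\rightharpoonup \mu$. Narrow convergence on a common compact support preserves total mass, and a standard argument (any point outside $\mathcal{E}$ has a ball disjoint from $E_k$ for large $k$, by finite intersection in nested compacts) shows $\mathrm{supp}(\mu)\subseteq\mathcal{E}$. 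The main obstacle is to verify $\|P\ast\mu\|_\infty\leq 1$, since $P$ has a singularity at the origin that obstructs direct pointwise passage to the limit. This is handled by exploiting that $P$ is nonnegative and lower semicontinuous on $\mathbb{R}^{n+1}$ (the apparent jump across $\{t=0\}$ disappears away from the origin because $\lim_{t\to 0^+} t/|\overline{x}|^{n+1}=0$ for $\overline{x}\neq 0$, and $P(0)=0\leq \liminf P$ trivially). For every fixed $\overline{x}$, the function $\overline{y}\mapsto P(\overline{x}-\overline{y})$ is then nonnegative and lower semicontinuous, so the lower-semicontinuous version of the Portmanteau theorem gives $P\ast\mu(\overline{x}) \leq \liminf_j P\ast\mu_{k_j}(\overline{x})\leq 1$. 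Hence $\mu$ is admissible for $\mathcal{E}$ with $\mu(\mathcal{E})\geq \lim\gamma_{\Theta^{1/2},+}(E_k)$, concluding the proof.
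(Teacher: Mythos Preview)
Your proof is correct. Parts 1, 2, and 4 follow the same route as the paper, stated more tersely. For part 3 you take a genuinely different and more direct approach. The paper extracts a weak-$\ast$ limit $S$ of the \emph{potentials} $P\ast\mu_k$ in $L^\infty=(L^1)^\ast$, defines the limiting object distributionally by $\langle\mu,\varphi\rangle:=\langle S,\overline{\Theta}^{1/2}\varphi\rangle$, checks it is a positive Radon measure supported on $\pazocal{E}$, and then identifies $S=P\ast\mu$ via a mollification argument. You instead pass to a weak limit of the \emph{measures} $\mu_k$ themselves (they sit in the fixed compact $E_1$ with uniformly bounded mass, by the $n$-growth of admissible measures) and transfer the potential bound through the limit using that $\overline{y}\mapsto P(\overline{x}-\overline{y})$ is nonnegative and lower semicontinuous, so the Portmanteau inequality applies. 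Your route is shorter and avoids distribution theory entirely; the trade-off is that it leans essentially on the positivity and lower semicontinuity of $P$, whereas the paper's argument does not, and (as the paper notes in the remark following its proof) extends to give outer regularity of the full capacity $\gamma_{\Theta^{1/2}}$ taken over distributions.

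One point you should make explicit: admissibility gives $P\ast\mu_{k_j}\le 1$ only $\pazocal{L}^{n+1}$-almost everywhere, not at every $\overline{x}$, whereas your Portmanteau step is pointwise. This is harmless because $P\ast\mu_{k_j}$ is itself lower semicontinuous (Fatou's lemma, since $P\ge 0$ is lsc), so the set $\{P\ast\mu_{k_j}>1\}$ is open and Lebesgue-null, hence empty; the bound is therefore pointwise and your argument goes through.
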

	\begin{proof}
		During this proof we shall write $\gamma_{+}:=\gamma_{\Theta^{1/2},+}$ to ease notation. To verify \textit{1}, we pick $E \subset \mathbb{R}^{n+1}$ compact and prove that for any $\mu$ admissible for $\gamma_{+}(E)$ there exists a measure $\mu_{\tau}$, admissible for $\gamma_{+}(\tau(E))$, such that $\mu(E)=\mu_{\tau}(\tau(E))$. It is clear that once this property is verified, the result will follow.
        %if we verify this property, then $\gamma_{+}(\tau(E))\geq \gamma_{+}(E)=\gamma_{+}(-\tau(\tau(E)))\geq \gamma_{+}(\tau(E))$, and the result follows. 
        Let $\mu$ be admissible for $\gamma_{+}(E)$ and define $\mu_{\tau}(X):=\mu(X-\tau)$, for any $X\subseteq \mathbb{R}^{n+1}$ that is $\mu$-measurable. This way $\mu_{\tau}$ is clearly a positive Borel regular measure supported on $\tau(E)$ with $\mu_{\tau}(\tau(E))=\mu(E)$. In addition, for any $\overline{x}\in\mathbb{R}^{n+1}$,
		\begin{align*}
			|P\ast \mu_{\tau}(\overline{x})|=\bigg\rvert\int_{\tau(E)} P(\overline{x}-\overline{y})\text{d}\mu_{\tau}(\overline{y})\bigg\rvert=\bigg\rvert\int_{E} P(\overline{x}-\tau-\overline{u})\text{d}\mu(\overline{u})\bigg\rvert=|P\ast\mu(\overline{x}-\tau)|\leq 1,
		\end{align*}
		implying that $\mu_{\tau}$ is admissible for $\gamma_{+}(\tau(E))$ and we are done. To deal with $E$ an arbitrary Borel subset of $\mathbb{R}^{n+1}$, just notice that by Theorem \ref{thm2.2} admissible measures for $\gamma_+$ are locally finite and Borel regular, and thus Radon \cite[Corollary 1.11]{Ma}. So the quantity $\mu(E)$ can be computed as the limit $\lim_{k\to\infty}\mu(E_k)$, where $E_k$ is a proper sequence of compact subsets that approximates $E$.\medskip\\
		The proof of \textit{2} is analogous. Indeed, take the measure $\mu_\lambda(X):=\lambda^{n}\mu(\lambda^{-1}X)$, supported on $\lambda E$, and just notice that for any $\overline{x}\in \mathbb{R}^{n+1}$,
		\begin{align*}
			|P\ast \mu_{\lambda}(\overline{x})|=\bigg\rvert\int_{\lambda E} P(\overline{x}-\overline{y})\text{d}\mu_{\lambda}(\overline{y})\bigg\rvert=\lambda^n\bigg\rvert\int_{E} P(\overline{x}-\lambda\overline{u})\text{d}\mu(\overline{u})\bigg\rvert=|P\ast\mu(\lambda^{-1}\overline{x})|\leq 1.
		\end{align*}
		%Therefore $\gamma_+(\lambda E)\geq \lambda^n\gamma_+(E)=\lambda^n\gamma_+(\lambda^{-1}(\lambda E))\geq \gamma_+(\lambda E)$, and the result follows.\medskip\\
		Moving on to \textit{3}, observe that $\gamma_+(\pazocal{E})\leq \gamma_+(E_k)$ for any $k$. Hence $\gamma_+(\pazocal{E})\leq \lim_{k\to\infty}\gamma_{+}(E_k)$ and we are left to prove the converse inequality. To do so, let us pick for each $k$ an admissible measure $\mu_k$ for $\gamma_+(E_k)$ with
		\begin{equation*}
			\gamma_+(E_k)-\frac{1}{k} \leq \mu_k(E_k) \leq \gamma_+(E_k),
		\end{equation*}
		We shall verify that there exists an admissible measure $\mu$ for $\gamma_+(\pazocal{E})$ so that for each test function $\varphi$, $\lim_{k\to\infty} \langle \mu_k, \varphi \rangle = \langle \mu, \varphi \rangle$, where we have used the notation $\langle \mu, \varphi \rangle := \int \varphi \, \text{d}\mu$. If this is the case, for $\varphi$ test function with $\varphi \equiv 1$ in a neighborhood of $\pazocal{E}$,
		\begin{equation*}
			\lim_{k\to\infty} \gamma_+(E_k)\leq \lim_{k\to\infty}  \mu_k(E_k) = \lim_{k\to\infty} \langle \mu_k, \varphi \rangle = \langle \mu, \varphi \rangle = \mu(\pazocal{E}) \leq \gamma_+(\pazocal{E}),
		\end{equation*}
		and we would be done. To construct such $\mu$, let $\varphi\in \pazocal{C}_c^{\infty}(\mathbb{R}^{n+1})$ and observe that $\langle \mu_k, \varphi \rangle = \langle \Theta^{1/2}P\ast \mu_k, \varphi \rangle = \langle P\ast \mu_k, \overline{\Theta}^{1/2}\varphi \rangle$. By assumption $P\ast \mu_k$ belongs to the unit ball of $L^{\infty}(\mathbb{R}^{n+1})\cong L^1(\mathbb{R}^{n+1})^\ast$ and moreover, proceeding as in \cite[\textsection 3]{MPr}, it is clear that $\overline{\Theta}^{1/2}\varphi \in L^1(\mathbb{R}^{n+1})$. Therefore, since $L^1(\mathbb{R}^{n+1})$ is separable, by the sequential version of Banach-Alaoglu's theorem we may assume that there exists some $S\in L^{\infty}(\mathbb{R}^{n+1})$ with $\|S\|_\infty\leq 1$ and $P\ast \mu_k\to S$ as $k\to \infty$ in a $\text{weak}^\star$-$L^\infty$ sense. Therefore,
		\begin{equation*}
			\lim_{k\to\infty} \langle \mu_k, \varphi \rangle = \langle S, \overline{\Theta}^{1/2}\varphi \rangle, \hspace{0.5cm} \forall \varphi \in \pazocal{C}^\infty_c(\mathbb{R}^{n+1}).
		\end{equation*}
		Let us define a distribution (a priori) $\mu$ acting on test functions as $\langle \mu, \varphi \rangle := \langle S, \overline{\Theta}^{1/2}\varphi \rangle$, so that we have $\lim_{k\to\infty} \langle \mu_k, \varphi \rangle = \langle \mu, \varphi \rangle$ for any $\varphi \in \pazocal{C}_c^\infty(\mathbb{R}^{n+1})$. Observe that by the latter identity, for any $\varphi\geq 0$ we have $\langle \mu, \varphi \rangle \geq 0$. It is not difficult to prove that such property implies that $\mu$ is a distribution of order $0$ (we refer the reader to the proof of \cite[Theorem 2.7]{Ca}, for example), so applying \cite[Theorem 2.5]{Ca} and Riesz's representation theorem, we deduce that in fact $\mu$ is a positive Radon measure. In addition, since the supports of $\mu_k$ are contained in $E_k$ and $E_k\downarrow \pazocal{E}$, it follows that $\text{supp}(\mu)\subseteq \pazocal{E}$. Therefore, if we prove that $\|P\ast \mu\|_\infty\leq 1$ we will be done, since $\mu$ would become an admissible measure for $\gamma_+(\pazocal{E})$. \medskip\\
		Such estimate will follow from the equality $P\ast \mu = S$. To verify it, we regularize $P\ast \mu_k$ and $\mu_k$: take $\psi\in \pazocal{C}_c^\infty(B(0,1))$ positive and radial with $\int \psi = 1$ and set $\psi_\varepsilon:=\varepsilon^{-(n+1)}\psi(\cdot/\varepsilon)$. Then,
		\begin{equation*}
			\lim_{k\to\infty} \big( \psi_\varepsilon \ast P \ast \mu_k \big)(\overline{x})= \psi_\varepsilon \ast S (\overline{x}), \hspace{0.5cm} \overline{x}\in \mathbb{R}^{n+1},
		\end{equation*}
		since $P\ast \mu_k$ converges to $S$ in a $\text{weak}^\star$-$L^\infty$ sense. On the other hand, as $\psi_\varepsilon\ast P \in \pazocal{C}^\infty(\mathbb{R}^{n+1})$ and by definition $\mu_k$ converges to $\mu$ in the weak topology of distributions, we have
		\begin{equation*}
			\lim_{k\to\infty} \big( \psi_\varepsilon \ast P \ast \mu_k \big)(\overline{x})= \big( \psi_\varepsilon \ast P \ast \mu \big) (\overline{x}), \hspace{0.5cm} \overline{x}\in \mathbb{R}^{n+1}.
		\end{equation*}
		So $\psi_\varepsilon\ast S = \psi_\varepsilon \ast P \ast \mu$ for every $\varepsilon>0$, so $S=P\ast \mu$, and the proof of \textit{3} is complete.\medskip\\
		Finally we prove \textit{4}. Abusing notation, let us set $E:= \bigcup_{j=1}^\infty E_j$, which is also a Borel subset of $\mathbb{R}^{n+1}$, and fix $K\subset E \subset \mathbb{R}^{n+1}$ compact. Let $\mu$ be admissible for $\gamma_{+}(K)$. Observe that for any $X\subseteq \mathbb{R}^{n+1}$ $\mu$-measurable, one has
		\begin{align*}
			\mu(X)=\mu\Bigg( \bigcup_{j=1}^\infty (K\cap E_j)\cap X \Bigg)=\sum_{j=1}^\infty \mu|_{K\cap E_j}(X),
		\end{align*}
		so in particular, since $K$ is also a Borel set and thus $\mu$-measurable,
		\begin{align*}
			\mu(K)=\sum_{j=1}^\infty \mu|_{E_j}(K).
		\end{align*}
		If we take the supremum over all admissible measures for $\gamma_{+}(K)$ on both sides of the previous inequality, we have
		\begin{align*}
			\gamma_{+}(K)\leq \sum_{j=1}^\infty \;  \sup_{\substack{\text{supp}(\mu)\subseteq K \\ \|P\ast \mu\|_\infty \leq 1}}\; \mu|_{E_j}(K).
		\end{align*}
		We claim that for each $j\geq 1$ the following is true:
		\begin{equation}
			\label{eq3.1}
			\sup_{\substack{\text{supp}(\mu)\subseteq K \\ \|P\ast \mu\|_\infty \leq 1}}\; \mu|_{E_j}(K) \leq \gamma_{+}(E_j\cap K).
		\end{equation}
		To verify such estimate we assume that it does not hold and reach a contradiction. So suppose that there exists $\mu$ admissible for $\gamma_{+}(K)$ with
		\begin{equation*}
			\mu|_{E_j}(K)>\gamma_{+}(E_j\cap K).
		\end{equation*}
		Then, for any compact subset $F\subseteq E_j\cap K $ we have $\mu|_{E_j}(K) > \gamma_{+}(F)$. Clearly $\mu|_{F}$ is admissible for $\gamma_+(F)$. Indeed, for any $\overline{x}\in \mathbb{R}^{n+1}$,
		\begin{equation*}
			|P\ast \mu|_{F}(\overline{x})| = \int_{F} P(\overline{x}-\overline{y})\text{d}\mu(\overline{y})\leq \int_{K} P(\overline{x}-\overline{y})\text{d}\mu(\overline{y})\leq \|P\ast \mu\|_{\infty} \leq 1,
		\end{equation*}
		and the Borel regularity follows from that of $\mu$ and \cite[Theorem 1.9]{Ma}, that can be applied by the $n$-growth of $\mu$. Thus $\gamma_{+}(F)\geq \mu(F)$. Hence, by hypothesis, 
		\begin{equation*}
			\mu(E_j\cap K)>\mu(F), \hspace{0.75cm} \forall F \subseteq E_{j}\cap K \;\text{ with } F \text{ compact},
		\end{equation*}
		which contradicts that $\mu$ enjoys an inner regularity property, since it is a Radon measure on $\mathbb{R}^{n+1}$. Therefore, \eqref{eq3.1} must hold, which implies
		\begin{equation*}
			\gamma_{+}(K)\leq \sum_{j=1}^\infty\gamma_{+}(E_j\cap K)\leq \sum_{j=1}^\infty\gamma_{+}(E_j).
		\end{equation*}
		Then, since $K$ was any compact subset contained in $E$, the desired estimate follows.
	\end{proof}

    \begin{rem}
		The argument we have presented for property \textit{3} can be easily adapted for general distributions. That is, it can be checked that, in fact, $\gamma_{\Theta^{1/2}}$ also enjoys the outer regularity property.
	\end{rem}

    The next result describes the behavior of $\gamma_{\Theta^{1/2},+}$ under canonical reflections of $\mathbb{R}^{n+1}$.
	
	\begin{prop}
		\label{prop3.2}
		Let $E\subset \mathbb{R}^{n+1}$ be a Borel set and for each $i\in\{1,\ldots,n\}$ denote by $\pazocal{R}_{i}$ the reflection with respect to the hyperplane $\{x_i=0\}$, and  by $\pazocal{R}_{t}$ the reflection with respect to $\{t=0\}$. Then,
		\begin{equation*}
			\gamma_{\Theta^{1/2},+}(E)= \gamma_{\Theta^{1/2},+}(\pazocal{R}_{i}(E)), \hspace{0.5cm} 1\leq i\leq n,
		\end{equation*}
		and moreover,
		\begin{equation*}
			\gamma_{\Theta^{1/2},+}(E)= \gamma_{\overline{\Theta}^{1/2},+}(\pazocal{R}_{t}(E)).
		\end{equation*}
	\end{prop}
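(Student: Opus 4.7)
The plan is to mirror the proof of part \textit{1} of Proposition \ref{prop3.1}: for each reflection, I would construct a pushforward correspondence $\mu \mapsto \mu'$ between admissible measures for the two capacities that preserves total mass. Since each reflection is an involution, showing that this map sends admissibles to admissibles already yields both inequalities simultaneously; the passage from compact $E$ to an arbitrary Borel set follows verbatim from the argument given for translation invariance, invoking Radonness of admissible measures via Theorem \ref{thm2.2}.

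For a spatial reflection $\mathcal{R}_i$, given $\mu$ admissible for $\gamma_{\Theta^{1/2},+}(E)$ I would set $\mu_i(X) := \mu(\mathcal{R}_i(X))$. This is a positive Borel regular measure supported on $\mathcal{R}_i(E)$ with $\mu_i(\mathcal{R}_i(E)) = \mu(E)$. The core observation is that
\begin{equation*}
P(\mathcal{R}_i \overline{x}) = \frac{t}{|\mathcal{R}_i \overline{x}|^{n+1}}\chi_{\{t > 0\}} = \frac{t}{|\overline{x}|^{n+1}}\chi_{\{t > 0\}} = P(\overline{x}),
\end{equation*}
since $\mathcal{R}_i$ preserves $|x|$ and leaves $t$ untouched. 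A change of variables then gives
\begin{equation*}
P \ast \mu_i(\overline{x}) = \int P(\overline{x} - \mathcal{R}_i \overline{u})\,\mathrm{d}\mu(\overline{u}) = \int P(\mathcal{R}_i \overline{x} - \overline{u})\,\mathrm{d}\mu(\overline{u}) = P \ast \mu(\mathcal{R}_i \overline{x}),
\end{equation*}
so $\|P \ast \mu_i\|_\infty \leq 1$ and $\mu_i$ is admissible for $\gamma_{\Theta^{1/2},+}(\mathcal{R}_i(E))$.

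For the temporal reflection $\mathcal{R}_t$, I would again define $\mu_t(X) := \mu(\mathcal{R}_t(X))$, supported on $\mathcal{R}_t(E)$ with $\mu_t(\mathcal{R}_t(E)) = \mu(E)$. Here the crucial kernel identity is
\begin{equation*}
P(\mathcal{R}_t \overline{z}) = \frac{-t}{|\overline{z}|^{n+1}}\chi_{\{t < 0\}} = P^{\ast}(\overline{z}),
\end{equation*}
and a direct computation shows $\mathcal{R}_t(\overline{x} - \mathcal{R}_t \overline{u}) = \mathcal{R}_t \overline{x} - \overline{u}$, from which $P^{\ast}(\overline{x} - \mathcal{R}_t \overline{u}) = P(\mathcal{R}_t \overline{x} - \overline{u})$. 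Consequently,
\begin{equation*}
P^{\ast} \ast \mu_t(\overline{x}) = \int P^{\ast}(\overline{x} - \mathcal{R}_t \overline{u})\,\mathrm{d}\mu(\overline{u}) = P \ast \mu(\mathcal{R}_t \overline{x}),
\end{equation*}
so $\|P^{\ast} \ast \mu_t\|_\infty \leq 1$, meaning $\mu_t$ is admissible for $\gamma_{\overline{\Theta}^{1/2},+}(\mathcal{R}_t(E))$.

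Since $\mathcal{R}_i$ and $\mathcal{R}_t$ are involutions and the above arguments use no more than the symmetry $P \circ \mathcal{R}_i = P$ and $P \circ \mathcal{R}_t = P^{\ast}$ (and their symmetric counterparts), applying the same construction in reverse yields the opposite inequalities. I do not anticipate any substantive obstacle: the only point requiring care is the bookkeeping verifying that $P^{\ast}$ relates to $P$ through $\mathcal{R}_t$ in exactly the way needed, which is immediate from the definitions recorded in Section \ref{sec2}.
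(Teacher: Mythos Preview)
Your proposal is correct and follows essentially the same approach as the paper: both arguments push forward an admissible measure through the reflection, verify via the kernel identities $P\circ\pazocal{R}_i=P$ and $P\circ\pazocal{R}_t=P^\ast$ that the resulting measure is admissible for the capacity of the reflected set, and appeal to the involution property for the reverse inequality. The only cosmetic difference is that the paper writes $\mu_i(X):=\mu(\pazocal{R}_i^{-1}(X))$, which coincides with your $\mu(\pazocal{R}_i(X))$ since reflections are self-inverse.
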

	
	\begin{proof}
		Fix $i\in\{1,\ldots,n\}$ and check, as in the proof of properties \textit{1} and \textit{2} of Proposition \ref{prop3.1}, that for any $\mu$ admissible for and $\gamma_{\Theta^{1/2},+}(E)$, there is $\mu_{i}$, admissible for $\gamma_{\Theta^{1/2},+}(\pazocal{R}_{i}(E))$, such that $\mu(E)=\mu_{i}(\pazocal{R}_{i}(E))$. So we fix $\mu$ admissible for $\gamma_{\Theta^{1/2},+}(E)$ and define
		\begin{equation*}
			\mu_{i}(X):=\mu\big(\pazocal{R}_{i}^{-1}(X)\big), \hspace{0.5cm} \forall X\subseteq \mathbb{R}^{n+1}\; \mu\text{-measurable}.
		\end{equation*}
		Again, $\mu_i$ is a positive Borel regular measure supported on $\pazocal{R}_{i}(E)$ such that $\mu(E)=\mu_i(\pazocal{R}_{i}(E))$. Finally, to verify the admissibility of $\mu_i$, notice that for any $\overline{x}\in \mathbb{R}^{n+1}$ we have 
		\begin{align*}
			|P\ast \mu_{i}(\overline{x})|&= \int_{\pazocal{R}_{i}(E)} P(\overline{x}-\overline{y})\text{d}\mu_{i}(\overline{y}) =\ \int_{E} P\big(\overline{x}-\pazocal{R}_{i}(\overline{u})\big)\text{d}\mu(\overline{u})
		\end{align*}
		Observe that $\pazocal{R}_{i}(\overline{u})=(u_1,\ldots,-u_i,\ldots,u_{n+1})$, so using the particular definition of $P$,
		\begin{align*}
			\int_{E} P\big(\overline{x}-\pazocal{R}_{i}(\overline{u})\big)\text{d}\mu(\overline{u}) = \int_{E} P\big(\pazocal{R}_{i}(\overline{x})-\overline{u}\big)\text{d}\mu(\overline{u}) =  P\ast \mu(\pazocal{R}_{i}(\overline{x}))  
			\leq 1,
		\end{align*}
		that is what we wanted to prove. On the other hand, if $i=n+1$, that is, if $x_i=t$, the computations are similar, but let us make them explicit to emphasize the role of the indicator function, that is responsible for the change of $\Theta^{1/2}$ into $\overline{\Theta}^{1/2}$:
		\begin{align*}
			|P^{\ast}\ast \mu_{n+1}(\overline{x})|&= \int_{E} P^{\ast}\big(\overline{x}-\pazocal{R}_{t}(\overline{u})\big)\text{d}\mu(\overline{u}) =  \int_E \frac{-t-u}{|\pazocal{R}_{t}(\overline{x})-\overline{u}|^{n+1}}\chi_{\{-t-u>0\}}(\overline{u})\text{d}\mu(\overline{u}) \\
			&= \int_{E}P\big( \pazocal{R}_{t}(\overline{x})-\overline{u}\big)\text{d}\mu(\overline{u}) =P\ast\mu(\pazocal{R}_{t}(\overline{x}))\leq 1,
		\end{align*}
		that implies the desired result.
	\end{proof}
	
	\begin{rem}
		Observe that combining the first result of Proposition \ref{prop3.1} and Proposition \ref{prop3.2}, the latter result also holds for any affine canonical reflection. That is, any reflection with respect to hyperplanes of the form $\{x_i=c\}$ or $\{t=c\}$, for any $c\in\mathbb{R}$. This implies, in particular, that if $E$ presents any temporal axis of symmetry, then its $\gamma_{\Theta^{1/2},+}$ and $\gamma_{\overline{\Theta}^{1/2},+}$ capacities coincide.
	\end{rem}
	\begin{rem}
		Notice that we have also obtained that if the measure $\mu$ satisfies the condition $\|P\ast \mu\|_{\infty}\leq 1$, then
		\begin{equation*}
			\|P\ast \mu_{\tau}\|_{\infty}\leq 1 \hspace{0.5cm} \text{and} \hspace{0.5cm}  \|P\ast \mu_{i}\|_{\infty}\leq 1, \hspace{0.25cm} \text{for} \hspace{0.25cm} i=1,2,\ldots,n.
		\end{equation*}
	\end{rem}
	
	\bigskip
	\subsection{Comparability between \mathinhead{\gamma_{\Theta^{1/2},+}}{} and \mathinhead{\widetilde{\gamma}_{\Theta^{1/2},+}}{}}
	\label{subsec3.2}
	\bigskip
	
	One of the main characteristics of the kernels $P$ and $P^{\ast}$ is the presence of an indicator function with respect to the $t$-variable. Such fact seems to endow the temporal axis with a distinct feature when it comes to constructing removable sets for the $\Theta^{1/2}$-equation, as it is exemplified in \cite[Proposition 6.1]{MPr} with the vertical line segment $\{0\}\times [0,1]$. And what about the time-reflected line segment $\{0\}\times [0,-1]$? It is clear, by the translation invariance of $\gamma_{\Theta^{1/2},+}$, that its capacity is equally $0$.\medskip\\
	When trying to find a subset $E\subset \mathbb{R}^{n+1}$ with non-comparable $\gamma_{\Theta^{1/2},+}$ and $\gamma_{\overline{\Theta}^{1/2},+}$ capacities, the above trivial observation suggests that it may be not possible. In fact, the following result was a first motivation to carry out the study of the present subsection:
	
	\begin{prop}
		\label{prop3.3}
		The $\gamma_{\Theta^{1/2},+}$ capacity of any non-horizontal line segment is null.
	\end{prop}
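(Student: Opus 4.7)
The plan is to show that any positive measure $\mu$ admissible for $\gamma_{\Theta^{1/2},+}(L)$ must be identically zero. First I would use the translation invariance from Proposition \ref{prop3.1} to put one endpoint of $L$ at the origin, so that $L=\{rv:r\in[0,\ell]\}$ for some unit vector $v=(v_x,v_t)$ with $v_t\neq 0$; after reversing the parametrization if necessary, I may assume $v_t>0$. Let $\nu$ denote the pushforward of $\mu$ under $r\mapsto rv$, a finite positive Borel measure on $[0,\ell]$.

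For small $\eta>0$ consider the tubular neighborhood $T_\eta:=\{sv+w:s\in[0,2\ell],\ w\in v^\perp,\ |w|\leq \eta\}$, which has Lebesgue measure $\pazocal{L}^{n+1}(T_\eta)\lesssim \ell\eta^n$. The admissibility of $\mu$ immediately gives the upper estimate $\int_{T_\eta}P\ast\mu\,d\pazocal{L}^{n+1}\leq \pazocal{L}^{n+1}(T_\eta)\lesssim \ell\eta^n$. For the matching lower bound, for $\overline{x}=sv+w\in T_\eta$ and $r\in[0,\ell]$ one has $|\overline{x}-rv|^2=(s-r)^2+|w|^2$ and the $t$-component of $\overline{x}-rv$ equals $(s-r)v_t+w_t$; since $|w_t|\leq|w|\leq\eta$, restricting to $s\geq r+2\eta/v_t$ forces this $t$-component to be $\geq v_t(s-r)/2>0$, so that
\begin{equation*}
P(\overline{x}-rv)\geq \frac{v_t(s-r)/2}{((s-r)^2+|w|^2)^{(n+1)/2}}.
\end{equation*}
By Fubini's theorem I obtain
\begin{equation*}
\int_{T_\eta}P\ast\mu\,d\pazocal{L}^{n+1}\gtrsim v_t\int_0^\ell\int_{2\eta/v_t}^{2\ell-r}\int_{\substack{|w|\leq\eta\\ w\in v^\perp}}\frac{u\,dw\,du}{(u^2+|w|^2)^{(n+1)/2}}\,d\nu(r).
\end{equation*}

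Passing to polar coordinates in the $n$-dimensional disc $v^\perp\cap B(0,\eta)$ and splitting the $u$-range at $u\sim\eta$, the inner double integral is bounded below by $c\,\eta$ when $n\geq 2$ and by $c\,\eta\log(1/\eta)$ when $n=1$. Comparing with the upper bound yields $\nu([0,\ell])\lesssim \ell\eta^{n-1}$ for $n\geq 2$ and $\nu([0,\ell])\lesssim \ell/\log(1/\eta)$ for $n=1$; letting $\eta\to 0^+$ forces $\nu\equiv 0$, hence $\mu\equiv 0$ and $\gamma_{\Theta^{1/2},+}(L)=0$.

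The main obstacle is the explicit estimation of the tubular integral, especially in the planar case $n=1$, where the gain over the trivial upper bound is only logarithmic---this reflects the borderline nature of the statement, since horizontal segments (the excluded case) carry positive $\gamma_{\Theta^{1/2},+}$ by \cite[Proposition 6.1]{MPr}. For $n\geq 2$ the proposition is in fact already a direct consequence of Theorem \ref{thm2.3} together with $\pazocal{H}^n_\infty(L)=0$ for any one-dimensional segment, so the substance of the statement really lies in the planar case.
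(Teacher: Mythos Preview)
Your proposal is correct and follows a genuinely different route from the paper's proof. The paper (following the method of \cite[Proposition~6.1]{MPr}) restricts immediately to the plane, evaluates $P\ast\mu$ at points $\overline{x}=(u_0\cos\alpha,u_0\sin\alpha)$ lying on the segment itself to obtain
\[
\sin\alpha\int_0^{u_0}\frac{d\mu(u)}{u_0-u}\leq 1,
\]
and then uses a covering argument: for each $u_0$ one extracts a short interval $[u_0-\ell,u_0]$ with $\mu$-mass $\leq \ell\eta/\sin\alpha$, and finitely many such intervals cover all but a small tail, forcing $\mu(E)$ arbitrarily small. Your approach avoids both the pointwise evaluation on a Lebesgue-null set and the covering step: you simply compare the integral of $P\ast\mu$ over a thin tube $T_\eta$ against the trivial upper bound $|T_\eta|$ and let $\eta\to 0$. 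This is arguably cleaner, works uniformly in all dimensions, and makes the dichotomy transparent---polynomial gain in $\eta$ for $n\geq 2$, only logarithmic for $n=1$. Your observation that the case $n\geq 2$ is already immediate from Theorem~\ref{thm2.3} is correct and matches the paper's tacit reduction (``it is clear that we may assume $n=1$'').

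One small presentational comment: the phrase ``splitting the $u$-range at $u\sim\eta$'' is slightly misleading, since your lower limit $2\eta/v_t\geq 2\eta$ (as $v_t\leq 1$) already places the entire $u$-range above $\eta$. No split is actually needed---in the planar case $\int_{-\eta}^{\eta}\frac{u\,dw}{u^2+w^2}=2\arctan(\eta/u)\approx 2\eta/u$ throughout the range, and integrating in $u$ gives the $\eta\log(1/\eta)$ directly; similarly for $n\geq 2$ the $w$-integral is $\approx c_n\eta^n/u^{n+1}$ and the $u$-integral gives $\approx c\,\eta$. Just rephrase that sentence when you fill in the details.
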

	
	\begin{proof}
		It is clear that we may assume $n=1$, that is, the ambient space is $\mathbb{R}^{2}$. Denote by $E$ the unit segment with one of its end-points at the origin and with angle $\alpha\in (0,\pi)$ between the positive direction of the $x$-axis and $E$. We shall follow the same method of proof given for \cite[Proposition 6.1] {MPr}, that is: we will assume $\gamma_{\Theta^{1/2},+}(E)>0$ and reach a contradiction.\medskip\\
		Under the previous assumption there exists an admissible measure $\mu$ for $\gamma_{\Theta^{1/2},+}(E)$ with $\mu(E)>0$. Let us parameterize $E$ as $u\mapsto (u\cos{\alpha}, u\sin{\alpha}), \, u\in[0,1]$ and note that since $\mu$ has linear growth (Theorem \ref{thm2.2}), given $\eta>0$ we can take $c\in(0,1)$ such that
		\begin{equation*}
			\mu\big(\big\{  (u\cos{\alpha}, u\sin{\alpha})\;:\; c\leq u \leq 1 \big\}\big) < \eta.
		\end{equation*}
		Writing explicitly the normalization condition $\|P\ast \mu\|_{\infty}\leq 1$, we have
		\begin{equation*}
			P\ast \mu(\overline{x})=\int_0^1\frac{t-u\sin{\alpha}}{\big( x-u\cos{\alpha} \big)^2+\big( t-u\sin{\alpha} \big)^2}\,\chi_{\{t-u\sin{\alpha}>0\}}\text{d}\mu(u)\leq 1,
		\end{equation*}
		for $\pazocal{L}^2$-a.e. $\overline{x}\in \mathbb{R}^{2}$. Therefore, if we set $F:=\{  (u\cos{\alpha}, u\sin{\alpha})\;:\; 0\leq u <c\, \}$ and choose $\overline{x}=(u_0\cos{\alpha}, u_0\sin{\alpha})\in F$, we get
		\begin{equation*}
			\sin{\alpha}\int_0^{u_0}\frac{\text{d}\mu(u)}{u_0-u}\leq 1.
		\end{equation*}
		So for any $\overline{x}\in F$ there exists $\ell=\ell(\overline{x})>0$ such that
		\begin{equation*}
			\sin{\alpha}\int_{u_0-\ell}^{u_0}\frac{\text{d}\mu(u)}{u_0-u}\leq \eta.
		\end{equation*}
		Hence, since $F$ is an interval, there exists a finite number of almost disjoint intervals $I_j$ with $|I_j|=\ell_j=\ell(\overline{x}_j)$ such that $F\subset \bigcup_{j=1}^N I_j$ and
		\begin{equation*}
			\mu(I_j)=\int_{u_{0,j}-\ell_j}^{u_{0,j}}\text{d}\mu(u)\leq \int_{u_{0,j}-\ell_j}^{u_{0,j}}\frac{\ell_j}{u_{0,j}-u}\text{d}\mu(u)\leq \ell_j\frac{\eta}{\sin{\alpha}}.
		\end{equation*}
		All in all,
		\begin{align*}
			\mu(E) < \mu(F)+\eta \lesssim \sum_{j=1}^{N}\mu(I_j)+\eta \leq \eta\Bigg(\frac{1}{\sin{\alpha}}\sum_{j=1}^N \ell_n + 1  \Bigg) \lesssim \eta\bigg( \frac{c}{\sin{\alpha}} + 1 \bigg),
		\end{align*}
		and this leads to a contradiction, since $\eta$ can be chosen arbitrarily small. Therefore, $\gamma_{\Theta^{1/2},+}(E)=0$ and this, together with the first and fourth properties of Proposition \ref{prop3.1} suffices to generalize the result for any other line segment.
	\end{proof}
	
	\begin{rem}
		We have proved this result only for $\gamma_{\Theta^{1/2},+}$ just for the sake of simplicity and to focus our study on its properties. However, by exactly the same method of proof of \cite[Proposition 6.1]{MPr} (involving the approximation of distributions by signed measures) one can obtain the same result for $\gamma_{\Theta^{1/2}}$.
	\end{rem}
	
	The second aspect that motivated the study of the comparability between $\gamma_{\Theta^{1/2},+}$ and $\widetilde{\gamma}_{\Theta^{1/2},+}$ is related to the different equivalent definitions admitted by the latter capacity. We stress that in the proofs given for the forthcoming results, we will \textit{exploit the fact that $P$ is a nonnegative kernel}. To ease notation, let us simply set
	\begin{equation*}
		\widetilde{\gamma}_+:=\widetilde{\gamma}_{\Theta^{1/2},+}.
	\end{equation*}
	As it is pointed out in \cite[\textsection 4]{MPr}, one of the main advantages of working with $\widetilde{\gamma}_{+}$ instead of just $\gamma_{\Theta^{1/2},+}$ is that it can be characterized in a similar manner as those capacities defined through anti-symmetric kernels, by means of the $L^2$-bound of a particular operator. To make such property explicit, we shall first introduce some notation. \medskip\\
    For a given real compactly supported Borel regular measure $\mu$ with $n$-growth, we define the operator $\pazocal{P}_\mu$ acting on elements of $L^1_{\text{loc}}(\mu)$ as
	\begin{equation*}
		\pazocal{P}_{\mu}f(\overline{x}):=\int_{\mathbb{R}^{n+1}}P(\overline{x}-\overline{y})f(\overline{y})\text{d}\mu(\overline{y}), \hspace{0.5cm} \overline{x}\notin \text{supp}(\mu).
	\end{equation*}
	Since $P(\overline{x})\lesssim |\overline{x}|^{-n}$, it is clear that the previous expression is defined pointwise on $\mathbb{R}^{n+1}\setminus{\text{supp}(\overline{x})}$; but the convergence of the integral may fail for $\overline{x}\in \text{supp}(\mu)$. This motivates the definition of a truncated version of $\pazocal{P}$,
	\begin{equation*}
		\pazocal{P}_{\mu,\varepsilon}f(\overline{x}):=\int_{|\overline{x}-\overline{y}|>\varepsilon}P(\overline{x}-\overline{y})f(\overline{y})\text{d}\mu(\overline{y}),\hspace{0.5cm} \overline{x}\in \mathbb{R}^{n+1}, \; \varepsilon>0.
	\end{equation*}
	For a given $1\leq p \leq \infty$, we will say that $\pazocal{P}_\mu f$ belongs to $L^p(\mu)$ if the $L^p(\mu)$-norm of the truncations $\|\pazocal{P}_{\mu,\varepsilon}f\|_{L^p(\mu)}$ is  uniformly bounded on $\varepsilon$, and we write
	\begin{equation*}
		\|\pazocal{P}_\mu f\|_{L^p(\mu)}:=\sup_{\varepsilon>0}\|\pazocal{P}_{\mu,\varepsilon}f\|_{L^p(\mu)}
	\end{equation*}
	We will say that the operator $\pazocal{P}_\mu$ is bounded on $L^p(\mu)$ if the operators $\pazocal{P}_{\mu,\varepsilon}$ are bounded on $L^p(\mu)$ uniformly on $\varepsilon$, and we equally set
	\begin{equation*}
		\|\pazocal{P}_\mu\|_{L^p(\mu)\to L^p(\mu)}:=\sup_{\varepsilon>0}\|\pazocal{P}_{\mu,\varepsilon}\|_{L^p(\mu)\to L^p(\mu)}.
	\end{equation*}
	We also introduce the \textit{transposed} operator associated with $\pazocal{P}_\mu$,
	\begin{equation*}
		\pazocal{P}_{\mu}^{\ast}f(\overline{x}):=\int_{\mathbb{R}^{n+1}}P^{\ast}(\overline{x}-\overline{y})f(\overline{y})\text{d}\mu(\overline{y}), \hspace{0.5cm} f\in L^1_{\text{loc}}(\mu),\;\overline{x}\notin \text{supp}(\mu);
	\end{equation*}
	with all its corresponding definitions relative to truncations and $L^p(\mu)$-boundedness. Now we are ready to state a crucial property of the capacity $\widetilde{\gamma}_+$. To compactify notation, let $\Sigma(E)$ be the collection of all positive Borel regular measures supported on $E$ that have $n$-growth with constant $1$.
 
	\begin{thm}{\normalfont{(\cite[Theorem 4.3]{MPr})}\textbf{.}}
		\label{thm3.4}
		For any $E\subset \mathbb{R}^{n+1}$ compact subset,
		\begin{equation*}
			\widetilde{\gamma}_{+}(E)\approx \gamma_{2,+}(E) := \sup\Big\{\mu(E)\;:\; \mu\in \Sigma(E),\; \|\pazocal{P}_\mu\|_{L^2(\mu)\to L^2(\mu)}\leq 1  \Big\},
		\end{equation*}
		where the implicit constant in the above estimate does not depend on $E$.
	\end{thm}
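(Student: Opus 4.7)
The plan is to prove the two inequalities separately, with the harder direction being $\gamma_{2,+}(E)\lesssim\widetilde{\gamma}_+(E)$. For the easier direction $\widetilde{\gamma}_+(E)\lesssim\gamma_{2,+}(E)$, I pick $\mu$ admissible for $\widetilde{\gamma}_+(E)$, so $\mu\in\Sigma(E)$ with $\|P\ast\mu\|_\infty\leq 1$ and $\|P^\ast\ast\mu\|_\infty\leq 1$, and verify that $P$ is a standard $n$-dimensional Calderón--Zygmund kernel on $\mathbb{R}^{n+1}$: the size bound $P(\overline{x})\leq|\overline{x}|^{-n}$ is immediate, and the smoothness $|P(\overline{u})-P(\overline{v})|\lesssim|\overline{u}-\overline{v}|/|\overline{u}|^{n+1}$ for $|\overline{u}-\overline{v}|\leq|\overline{u}|/2$ follows by a case split on the signs of $t_u,t_v$ (the mean value theorem when both are positive, and the trivial bound $|\overline{u}-\overline{v}|\geq|t_u-t_v|$ when the signs differ). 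Since $\mu$ has $n$-growth and $\pazocal{P}_\mu 1 = P\ast\mu$, $\pazocal{P}^\ast_\mu 1 = P^\ast\ast\mu$ both lie in $L^\infty$, the non-homogeneous $T(1)$ theorem of Nazarov--Treil--Volberg yields $\|\pazocal{P}_\mu\|_{L^2(\mu)\to L^2(\mu)}\leq C_0$, so after rescaling, $\mu/C_0$ is admissible for $\gamma_{2,+}(E)$.

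For the converse, take $\mu$ admissible for $\gamma_{2,+}(E)$. Since $P\geq 0$, the truncations $\pazocal{P}_{\mu,\varepsilon}1$ increase pointwise to $P\ast\mu$ as $\varepsilon\to 0$, so monotone convergence combined with $\|\pazocal{P}_{\mu,\varepsilon}1\|_{L^2(\mu)}\leq\mu(E)^{1/2}$ gives $\|P\ast\mu\|_{L^2(\mu)}\leq\mu(E)^{1/2}$, and likewise for $P^\ast\ast\mu$. Chebyshev's inequality then produces a subset $F\subseteq\text{supp}(\mu)$ with $\mu(F)\geq\mu(E)/2$ on which both $P\ast\mu$ and $P^\ast\ast\mu$ are bounded by an absolute constant $C_1$. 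Setting $\nu:=\mu|_F$, on $\text{supp}(\nu)\subseteq F$ we have $P\ast\nu\leq P\ast\mu\leq C_1$ and $P^\ast\ast\nu\leq P^\ast\ast\mu\leq C_1$, so it remains to propagate these bounds from $\text{supp}(\nu)$ to all of $\mathbb{R}^{n+1}$.

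This propagation is the main technical obstacle. For $\overline{x}\notin\text{supp}(\nu)$, I would choose $\overline{y}\in\text{supp}(\nu)$ realizing $r:=\text{dist}(\overline{x},\text{supp}(\nu))$ and split
\begin{equation*}
P\ast\nu(\overline{x})=\int_{|\overline{x}-\overline{z}|<2r}P(\overline{x}-\overline{z})\,\text{d}\nu(\overline{z})+\int_{|\overline{x}-\overline{z}|\geq 2r}P(\overline{x}-\overline{z})\,\text{d}\nu(\overline{z}).
\end{equation*}
The near piece is dominated by $r^{-n}\nu(B(\overline{x},2r))\lesssim 1$ via the $n$-growth of $\nu$. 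For the far piece, the CZ smoothness yields $P(\overline{x}-\overline{z})\leq P(\overline{y}-\overline{z})+Cr/|\overline{x}-\overline{z}|^{n+1}$; the first term integrates to at most $P\ast\nu(\overline{y})\leq C_1$, while the second contributes $O(1)$ after dyadic summation against the $n$-growth. An identical argument handles $P^\ast\ast\nu$, so $\|P\ast\nu\|_\infty,\|P^\ast\ast\nu\|_\infty\lesssim 1$ and a rescaling of $\nu$ becomes admissible for $\widetilde{\gamma}_+(E)$, giving $\widetilde{\gamma}_+(E)\gtrsim\nu(E)\geq\mu(E)/2$. One could imagine invoking a maximum principle for the operator $\Theta^{1/2}$ to achieve the propagation in one stroke, but the nonlocal nature of $(-\Delta)^{1/2}$ makes such a principle delicate; the CZ-smoothness decomposition bypasses the issue.
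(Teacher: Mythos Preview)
The paper does not prove this statement itself; it is quoted from \cite[Theorem~4.3]{MPr}. That said, the surrounding material in the paper (Lemmas~\ref{lem3.6} and~\ref{lem3.8}) does establish the direction $\widetilde{\gamma}_+(E)\lesssim\gamma_{2,+}(E)$ independently, by a route close to yours but with one essential extra step that you omit.

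Your argument for $\widetilde{\gamma}_+(E)\lesssim\gamma_{2,+}(E)$ has a gap: from $\|P\ast\mu\|_{L^\infty(\mathbb{R}^{n+1})}\leq 1$, a Lebesgue-a.e.\ bound, you pass directly to the hypotheses of the non-homogeneous $T(1)$ theorem, which are stated in terms of $L^\infty(\mu)$ or $\text{BMO}_\rho(\mu)$. Since $\text{supp}(\mu)$ is typically Lebesgue-null, the Lebesgue bound says nothing a priori about the truncated potentials $\pazocal{P}_{\mu,\varepsilon}1$ on $\text{supp}(\mu)$. This passage is precisely the content of Lemma~\ref{lem3.6} and observation~\eqref{eq3.2} in the paper, and it requires a genuine (if standard) argument. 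You also do not verify the weak-boundedness hypothesis, though once the $L^\infty(\mu)$ bound is in hand this is immediate from the nonnegativity of $P$, as in Lemma~\ref{lem3.10}.

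For the reverse inequality $\gamma_{2,+}(E)\lesssim\widetilde{\gamma}_+(E)$, your Chebyshev--restrict--propagate argument is correct and is the standard technique for results of this type. The paper offers nothing to compare against here, as it defers entirely to \cite{MPr} for this direction.
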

	
	We move on with our study introducing, for any $\overline{x}\neq 0$, the following kernel
	\begin{equation*}
		P_{\text{\normalfont{sy}}}(\overline{x}):=\frac{1}{2}\big[ P(\overline{x})+P(-\overline{x}) \big] = \frac{1}{2}\bigg[ \frac{t}{|\overline{x}|^{n+1}}\chi_{\{t>0\}}-\frac{t}{|\overline{x}|^{n+1}}\chi_{\{t<0\}} \bigg] = \frac{|t|}{2|\overline{x}|^{n+1}}.
	\end{equation*}
	
	\begin{lem}
		\label{lem3.5}
		Let $E\subset \mathbb{R}^{n+1}$ be compact and define
		\begin{equation*}
			\gamma_{\text{\normalfont{sy}},+}(E):=\sup_{\mu}\Big\{ \mu(E)\;:\; \mu\in \Sigma(E),\; \|P_{\text{\normalfont{sy}}}\ast \mu\|_{\infty}\leq 1  \Big\}.
		\end{equation*}
		Then,
		\begin{equation*}
			\frac{1}{2} \gamma_{\text{\normalfont{sy}},+}(E) \leq \widetilde{\gamma}_{+}(E) \leq \gamma_{\text{\normalfont{sy}},+}(E).
		\end{equation*}
	\end{lem}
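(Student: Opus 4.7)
The plan is to obtain both inequalities directly from the definitions, exploiting two observations: first, the identity $P_{\text{sy}}=\tfrac{1}{2}(P+P^{\ast})$ (using $P^{\ast}(\overline{x})=P(-\overline{x})$), and second, the crucial nonnegativity of both $P$ and $P^{\ast}$, which makes $P_{\text{sy}}$ a pointwise average rather than merely a symmetrization with possible cancellation. Convolving the identity against a nonnegative $\mu$ yields
\begin{equation*}
    P_{\text{sy}}\ast \mu(\overline{x}) \;=\; \tfrac{1}{2}\bigl[(P\ast\mu)(\overline{x})+(P^{\ast}\ast\mu)(\overline{x})\bigr],
\end{equation*}
and all three potentials are nonnegative. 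This identity will drive both inequalities.

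For the upper bound $\widetilde{\gamma}_{+}(E)\leq \gamma_{\text{sy},+}(E)$, I would take $\mu$ admissible for $\widetilde{\gamma}_{+}(E)$, so $\mu\in \Sigma(E)$ with $\|P\ast \mu\|_{\infty}\leq 1$ and $\|P^{\ast}\ast \mu\|_{\infty}\leq 1$. The displayed identity, together with the triangle inequality, immediately gives $\|P_{\text{sy}}\ast \mu\|_{\infty}\leq 1$. Since the $n$-growth with constant $1$ is inherited unchanged, $\mu$ is admissible for $\gamma_{\text{sy},+}(E)$, and taking the supremum over $\mu$ yields the inequality.

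For the lower bound $\tfrac{1}{2}\gamma_{\text{sy},+}(E)\leq \widetilde{\gamma}_{+}(E)$, I would start with $\mu$ admissible for $\gamma_{\text{sy},+}(E)$ and set $\nu:=\mu/2$. Since $P,P^{\ast}\geq 0$, the trivial pointwise bounds $P\leq 2P_{\text{sy}}$ and $P^{\ast}\leq 2P_{\text{sy}}$ hold, hence
\begin{equation*}
    P\ast \nu \;\leq\; P_{\text{sy}}\ast \mu \;\leq\; 1, \qquad P^{\ast}\ast \nu \;\leq\; P_{\text{sy}}\ast \mu \;\leq\; 1.
\end{equation*}
The measure $\nu$ has $n$-growth with constant $1/2\leq 1$, so $\nu\in \Sigma(E)$ and $\nu$ is admissible for $\widetilde{\gamma}_{+}(E)$. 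Therefore $\tfrac{1}{2}\mu(E)=\nu(E)\leq \widetilde{\gamma}_{+}(E)$, and taking the supremum over $\mu$ completes the argument.

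I do not expect any serious obstacle here: the whole proof rests on nonnegativity plus a one-line rescaling. The only point requiring a moment of care is the lower bound, where one must remember to divide $\mu$ by $2$ precisely so that both $P\ast \nu$ and $P^{\ast}\ast \nu$ stay under $1$ simultaneously (which is why the factor $1/2$ appears in the statement and cannot be improved by this argument); the $n$-growth constant is then preserved automatically.
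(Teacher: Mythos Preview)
Your proposal is correct and follows essentially the same approach as the paper: both directions rely on the identity $P_{\text{sy}}=\tfrac12(P+P^{\ast})$, the nonnegativity of $P$ and $P^{\ast}$ giving the pointwise bounds $P,P^{\ast}\leq 2P_{\text{sy}}$, and the rescaling $\mu\mapsto \mu/2$ to pass from admissibility for $\gamma_{\text{sy},+}$ to admissibility for $\widetilde{\gamma}_{+}$. Your explicit remark that the $n$-growth constant of $\nu=\mu/2$ drops to $1/2\leq 1$ is a small clarification the paper leaves implicit, but otherwise the arguments coincide.
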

	\begin{proof}
		Take $\mu$ any admissible measure for $\widetilde{\gamma}_{+}(E)$ and observe that by definition of $P_{\text{\normalfont{sy}}}$,
		\begin{equation*}
			\|P_{\text{\normalfont{sy}}}\ast \mu\|_{\infty}\leq \frac{1}{2}\Big[ \|P\ast \mu\|_{\infty}+\|P^{\ast}\ast \mu\|_{\infty} \Big]\leq 1,
		\end{equation*}
		that yields $\widetilde{\gamma}_{+}(E) \leq \gamma_{\text{\normalfont{sy}},+}(E)$. Conversely, if we consider any $\mu \in \Sigma(E)$ with $\|P_{\text{\normalfont{sy}}}\ast \mu\|_{\infty}\leq 1$, since $P$ is nonnegative, $P\leq 2P_{\text{\normalfont{sy}}}$ and $P^{\ast}\leq 2P_{\text{\normalfont{sy}}}$, and therefore
		\begin{equation*}
			\|P\ast \mu\|_{\infty}\leq 2, \hspace{0.5cm} \|P^{\ast}\ast \mu\|_{\infty}\leq 2.
		\end{equation*}
		So $\mu/2$ becomes admissible for $\widetilde{\gamma}_{+}(E)$ and we deduce the remaining inequality.
	\end{proof}
	
	Hence $\widetilde{\gamma}_{+}$ is comparable to the capacity defined through the symmetric kernel $P_{\text{\normalfont{sy}}}$. That is the reason to call it $(1/2,+)$-\textit{symmetric} caloric capacity. Let us now proceed with introducing another auxiliary capacity:
	\begin{equation*}
		\widetilde{\gamma}_{+}'(E):= \sup_{\mu}\Big\{\mu(E)\;:\; \mu\in \Sigma(E),\; \|P\ast \mu\|_{L^\infty(\mu)}\leq 1, \; \|P^{\ast}\ast \mu\|_{L^\infty(\mu)}\leq 1  \Big\}.
	\end{equation*}
	
	\begin{lem}
		\label{lem3.6}
		For a compact set $E\subset \mathbb{R}^{n+1}$,
		\begin{equation*}
			\widetilde{\gamma}_{+}(E)\lesssim \widetilde{\gamma}_{+}'(E).
		\end{equation*}
	\end{lem}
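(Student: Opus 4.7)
The inequality $\widetilde{\gamma}_{+}(E)\lesssim\widetilde{\gamma}_{+}'(E)$ is the easy half of the expected comparability: the admissibility condition defining $\widetilde{\gamma}_{+}$ is strictly stronger than the one defining $\widetilde{\gamma}_{+}'$, so my plan is to show that every measure admissible for the former is already admissible for the latter, yielding the estimate in fact with implicit constant $1$.

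Concretely, I would fix $\mu$ admissible for $\widetilde{\gamma}_{+}(E)$ --- so $\mu\in\Sigma(E)$, $\|P\ast\mu\|_{\infty}\leq 1$ and $\|P^{\ast}\ast\mu\|_{\infty}\leq 1$, with both norms taken in $L^{\infty}(\mathbb{R}^{n+1})$ --- and verify that $\mu$ is admissible for $\widetilde{\gamma}_{+}'(E)$. The key ingredient is the lower semicontinuity of $P\ast\mu$ and $P^{\ast}\ast\mu$ on $\mathbb{R}^{n+1}$, which follows from Fatou's lemma because the kernels $P$ and $P^{\ast}$ are nonnegative. For a nonnegative lower semicontinuous function $f$, the set $\{f>\lambda\}$ is open, so it is either empty or has positive Lebesgue measure; hence $\|f\|_{L^{\infty}(\mathbb{R}^{n+1})}$ coincides with $\sup_{\overline{x}\in\mathbb{R}^{n+1}}f(\overline{x})$. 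In particular the hypothesis $\|P\ast\mu\|_{\infty}\leq 1$ forces $P\ast\mu(\overline{x})\leq 1$ for every $\overline{x}\in\mathbb{R}^{n+1}$, from which
\begin{equation*}
\|P\ast\mu\|_{L^{\infty}(\mu)}\leq\sup_{\overline{x}\in\mathbb{R}^{n+1}}P\ast\mu(\overline{x})=\|P\ast\mu\|_{\infty}\leq 1,
\end{equation*}
and the same argument gives $\|P^{\ast}\ast\mu\|_{L^{\infty}(\mu)}\leq 1$. Thus $\mu$ is admissible for $\widetilde{\gamma}_{+}'(E)$, so $\mu(E)\leq\widetilde{\gamma}_{+}'(E)$, and taking the supremum over admissible $\mu$ finishes the proof.

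There is no substantive obstacle in this direction; the only subtlety is the $L^{\infty}$-versus-$L^{\infty}(\mu)$ comparison above, which genuinely relies on the positivity of $P$ and $P^{\ast}$ (without positivity, a function controlled only off a $\mathcal{L}^{n+1}$-null set could blow up on $\mathrm{supp}(\mu)$). The genuinely interesting content of the full comparability $\widetilde{\gamma}_{+}\approx\widetilde{\gamma}_{+}'$ sits in the reverse bound $\widetilde{\gamma}_{+}'(E)\lesssim\widetilde{\gamma}_{+}(E)$, which would demand a maximum-principle-type mechanism to upgrade a pointwise bound for $P\ast\mu$ on $\mathrm{supp}(\mu)$ to a pointwise bound on all of $\mathbb{R}^{n+1}$; but that is not what Lemma \ref{lem3.6} asserts.
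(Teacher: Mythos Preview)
Your proof is correct and gives the sharper conclusion $\widetilde{\gamma}_{+}(E)\leq \widetilde{\gamma}_{+}'(E)$ with implicit constant $1$. It is, however, a genuinely different and more elementary argument than the one the paper gives.

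The paper does \emph{not} pass through lower semicontinuity. Instead it runs a standard Calder\'on--Zygmund comparison (following Mattila--Paramonov): for $\overline{x}\in\text{supp}(\mu)$ and $\varepsilon>0$, it first bounds the $L^1$ average of $P\ast\mu|_{B(\overline{x},\varepsilon)}$ on $B(\overline{x},\varepsilon/2)$ using the $n$-growth of $\mu$, then invokes Lebesgue differentiation to pick a nearby point $\overline{z}$ where both $P\ast\mu(\overline{z})$ and $P\ast\mu|_{B(\overline{x},\varepsilon)}(\overline{z})$ are controlled, and finally compares the truncated potential $\pazocal{P}_{\mu,\varepsilon}1(\overline{x})$ to $P\ast\mu(\overline{z})$ via the kernel smoothness estimate from \cite[Lemma~2.1]{MPr}. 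This yields $\|\pazocal{P}_{\mu,\varepsilon}1\|_{L^\infty(\mu)}\leq C$ uniformly in $\varepsilon$, with a dimensional constant $C>1$. The advantage of the paper's route is that it is the template argument valid for any $n$-dimensional CZ kernel, not just nonnegative ones; the advantage of yours is that it is shorter, uses only Fatou, and gives the optimal constant. One small point worth making explicit in your write-up: in the paper's conventions, $\|P\ast\mu\|_{L^\infty(\mu)}$ means $\sup_{\varepsilon>0}\|\pazocal{P}_{\mu,\varepsilon}1\|_{L^\infty(\mu)}$, so after you deduce $P\ast\mu(\overline{x})\leq 1$ everywhere you should add the one-line observation that $0\leq \pazocal{P}_{\mu,\varepsilon}1\leq P\ast\mu$ pointwise (again by nonnegativity of $P$ and $\mu$) to conclude. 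Your final paragraph's guess about the reverse inequality is off: in the paper the bound $\widetilde{\gamma}_{+}'\lesssim\widetilde{\gamma}_{+}$ is obtained not by any maximum principle but by passing through $\gamma_{2,+}$ via a local $Tb$/$T1$ theorem, see Lemma~3.8 and Theorem~3.4.
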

	\begin{proof}
		As it is pointed out at the beginning of the proof of \cite[Theorem 4.3]{MPr}, the arguments to show the above estimate are standard. However, for the sake of completeness, we shall present them (they are inspired by \cite[Lemma 5.4]{MaP}). \medskip\\
		Let $\mu\in \Sigma(E)$ with $ \|P\ast \mu\|_{\infty}\leq 1$ and $ \|P^{\ast}\ast \mu\|_{\infty}\leq 1$. We aim to prove $ \|P\ast \mu\|_{L^\infty(\mu)}\lesssim 1$ and $ \|P^{\ast}\ast \mu\|_{L^\infty(\mu)}\lesssim 1$. We will study the first inequality (the computations for the second will be analogous), and to verify it we will argue by contradiction. That is, we assume that there exists $F\subseteq E$ with $\mu(F)>0$ so that $P\ast \mu$ attains arbitrarily big values there (notice that this does not contradict $ \|P\ast \mu\|_{\infty}\leq 1$, since  $F$ may be $\pazocal{L}^{n+1}$-null). Under this assumption, consider $0<\varepsilon<1$ and $\overline{x}\in F$, and observe that
		\begin{align*}
			\|P\ast \mu|_{B(\overline{x},\varepsilon)}\|_{L^1(B(\overline{x},\varepsilon/2))}&\lesssim  \int_{B(\overline{x},\varepsilon/2)}\bigg(\int_{B(\overline{x},\varepsilon)}\frac{\text{d}\mu(\overline{y})}{|\overline{z}-\overline{y}|^{n}}\bigg)\text{d}\pazocal{L}^{n+1}(\overline{z})\\
			&\leq  \int_{B(\overline{x},\varepsilon)}\bigg(\int_{B(\overline{y},2\varepsilon)} \frac{\text{d}\pazocal{L}^{n+1}(\overline{z})}{|\overline{z}-\overline{y}|^{n}} \bigg)\text{d}\mu(\overline{y}) \lesssim \varepsilon\,\mu(B(\overline{x},\varepsilon))\\
			&\leq \varepsilon^{n+1} \simeq \pazocal{L}^{n+1}\big( B(\overline{x},\varepsilon/2) \big).
		\end{align*}
		So by the Lebesgue differentiation theorem we can pick $\overline{z}\in B(\overline{x},\varepsilon/2)$ with $|P\ast \mu(\overline{z})|\leq \|P\ast \mu\|_\infty\leq 1$ and such that for a positive dimensional constant $A_1$,
		\begin{equation*}
			\big\rvert P\ast \mu|_{B(\overline{x},\varepsilon)}(\overline{z})\big\rvert \leq A_1.
		\end{equation*}
		Notice that the constant $A_1$ does not depend on $\overline{x}$, since the procedure above can be repeated for any other point obtaining the same estimate. Therefore,
		\begin{align*}
			\big\rvert P\ast \mu|_{\mathbb{R}^{n+1}\setminus{B(\overline{x},\varepsilon)}}(\overline{x})&-P\ast \mu(\overline{z})\big\rvert \\
			&=\Bigg\rvert \int_{|\overline{x}-\overline{y}|>\varepsilon}P(\overline{x}-\overline{y})\text{d}\mu(\overline{y})- \int_{\mathbb{R}^{n+1}}P(\overline{z}-\overline{y})\text{d}\mu(\overline{y}) \Bigg\rvert\\
			&\leq \int_{|\overline{x}-\overline{y}|>\varepsilon}\big\rvert P(\overline{x}-\overline{y})-P(\overline{z}-\overline{y}) \big\rvert \text{d}\mu(\overline{y})+A_1.
		\end{align*}
		
		Write $\overline{x}=(x,t_x),\,\overline{y}=(y,t_y)$ and $\overline{z}=(z,t_z)$ and consider the auxiliary point $\widehat{x}=(z-y,t_x-t_y)$. Applying the mean value theorem (component-wise) exactly as it is done in the proof of \cite[Lemma 2.1]{MPr} we have
		\begin{align*}
			\int_{|\overline{x}-\overline{y}|>\varepsilon}\big\rvert &P(\overline{x}-\overline{y})-P(\overline{z}-\overline{y}) \big\rvert \text{d}\mu(\overline{y}) \\
			&\leq  \int_{|\overline{x}-\overline{y}|>\varepsilon}\big\rvert P(\overline{x}-\overline{y})-P(\widehat{x}) \big\rvert \text{d}\mu(\overline{y}) + \int_{|\overline{x}-\overline{y}|>\varepsilon}\big\rvert P(\widehat{x})-P(\overline{z}-\overline{y}) \big\rvert \text{d}\mu(\overline{y})\\
			&\leq A_2\varepsilon \int_{|\overline{x}-\overline{y}|>\varepsilon} \frac{\text{d}\mu(\overline{y})}{|\overline{x}-\overline{y}|^{n+1}}+A_3\varepsilon \int_{|\overline{x}-\overline{y}|>\varepsilon} \frac{\text{d}\mu(\overline{y})}{|\overline{x}-\overline{y}|^{n+1}}
		\end{align*}
		
		Splitting the domain of integration into the annuli $A_j:=B\big(\overline{x},2^{j+1}\varepsilon\big)\setminus{B\big(\overline{x},2^{j}\varepsilon\big)}$ for $j\geq 0$, and using the $n$-growth of $\mu$ we obtain, regarding the above integrals,
		\begin{align*}
			\varepsilon \int_{|\overline{x}-\overline{y}|>\varepsilon}\frac{\text{d}\mu(\overline{y})}{|\overline{x}-\overline{y}|^{n+1}}=\varepsilon\sum_{j=1}^\infty\int_{A_j}\frac{\text{d}\mu(\overline{y})}{|\overline{x}-\overline{y}|^{n+1}} \leq \varepsilon\sum_{j=1}^\infty\frac{\big(2^{j+1}\varepsilon\big)^{n}}{(2^j\varepsilon)^{n+1}} = \sum_{j=1}^\infty\frac{1}{ 2^j} = 1.
		\end{align*}
		Therefore,
		\begin{align*}
			\big\rvert P\ast \mu|_{\mathbb{R}^{n+1}\setminus{B(\overline{x},\varepsilon)}}(\overline{x}) \big\rvert &\leq |P\ast \mu|_{\mathbb{R}^{n+1}\setminus{B(\overline{x},\varepsilon)}}(\overline{x})- P\ast \mu (\overline{z})|+|P\ast \mu(\overline{z})|\\
			&\leq A_2+A_3+A_1+\|P\ast \mu \|_{\infty}<\infty,
		\end{align*}
		and as $\overline{x}\in F$ was arbitrary and the previous constants are absolute, we get the contradiction we were looking for.
	\end{proof}
	
	Observe that in the previous proof we have deduced that for a given $\mu\in \Sigma(E)$,
	\begin{equation}
		\label{eq3.2}
		\text{if } \hspace{0.25cm} \|P\ast \mu\|_{\infty}\leq 1, \hspace{0.25cm} \text{ then } \hspace{0.25cm} \|P\ast \mu\|_{L^\infty(\mu)}\lesssim 1.
	\end{equation}
	
	To establish one additional inequality, we continue by introducing some more standard terminology adapted to our particular setting.
	
	\begin{defn}[$m$-dimensional Calderón-Zygmund kernel]
		\label{def3.1}
		Consider the (kernel) function $K:\mathbb{R}^{n+1}\times \mathbb{R}^{n+1}\setminus{\{(\overline{x},\overline{y})\in \mathbb{R}^{n+1}\times\mathbb{R}^{n+1}: \overline{x}=\overline{y}\}}\to \mathbb{C}$, with the property that there exist $C_1>0$ and $m>0$ such that for any $\overline{x}\neq \overline{y}$,
		\begin{equation*}
			|K(\overline{x},\overline{y})|\leq \frac{C_1}{|\overline{x}-\overline{y}|^m}.
		\end{equation*}
		Suppose also that for any $\overline{x}'$ with $|\overline{x}-\overline{x}'|\leq |\overline{x}-\overline{y}|/2$, there exist $C_2>0,\eta>0$ so that
		\begin{equation*}
			|K(\overline{x},\overline{y})-K(\overline{x}',\overline{y})|+|K(\overline{y},\overline{x})-K(\overline{y},\overline{x}')|\leq \frac{C_2|\overline{x}-\overline{x}'|^{\eta}}{|\overline{x}-\overline{y}|^{m+\eta}}.
		\end{equation*}
		If such estimates are satisfied, $K$ is called an $m$-\textit{dimensional Calderón-Zygmund kernel}. It is clear that the function $K(\overline{x},\overline{y}):=P(\overline{x}-\overline{y})$ is an $n$-dimensional Calderón-Zygmund kernel with $\eta=1$.
	\end{defn}
	
	We are now ready to state \cite[Theorem 3.21]{To4} in our particular context. Let us also remark that such result (in a general setting of $n$-dimensional Calderón-Zygmund operators with respect to measures with $n$-growth which may be non-doubling) was originally proved in \cite{NTV}, although we present a refinement that can be obtained from the results in \cite{NTV2} or \cite{To2}, as mentioned in \cite[\textsection 3.7.2]{To4}.
	
	\begin{thm}
		\label{thm3.7}
		Let $E\subset \mathbb{R}^{n+1}$ be a compact subset and $\mu\in \Sigma(E)$. Then, the operator $\pazocal{P}_\mu$ extends to a bounded operator on $L^2(\mu)$ if and only if there exists $c>0$ such that
		\begin{equation*}
			\|\pazocal{P}_{\mu,\varepsilon}\chi_Q\|_{L^2(\mu|_Q)}\leq c\mu(2Q)^{1/2}\hspace{0.5cm} \text{and} \hspace{0.5cm} \|\pazocal{P}_{\mu,\varepsilon}^{\ast}\chi_Q\|_{L^2(\mu|_Q)}\leq c\mu(2Q)^{1/2},
		\end{equation*}
		uniformly on $\varepsilon>0$, for any cube $Q\subset \mathbb{R}^{n+1}$.
	\end{thm}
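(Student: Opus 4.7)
The plan is to deduce Theorem \ref{thm3.7} as a direct application of the non-homogeneous $T1$ theorem of Nazarov--Treil--Volberg, in the refined form stated in \cite[Theorem 3.21]{To4}. The task then reduces to verifying that the kernel $K(\overline{x},\overline{y}):=P(\overline{x}-\overline{y})$ and the measure $\mu$ satisfy the hypotheses of that theorem, and that the testing conditions stated here match (up to harmless constants) those in the cited result.

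The necessity direction is essentially immediate. If $\pazocal{P}_{\mu}$ extends to a bounded operator on $L^{2}(\mu)$ with norm $C_0$, then for every cube $Q\subset \mathbb{R}^{n+1}$ and every $\varepsilon>0$,
\begin{equation*}
\|\pazocal{P}_{\mu,\varepsilon}\chi_{Q}\|_{L^{2}(\mu|_{Q})} \leq \|\pazocal{P}_{\mu,\varepsilon}\chi_{Q}\|_{L^{2}(\mu)} \leq C_0\|\chi_{Q}\|_{L^{2}(\mu)} = C_0\mu(Q)^{1/2}\leq C_0\mu(2Q)^{1/2},
\end{equation*}
and the same chain of inequalities applies to $\pazocal{P}_{\mu,\varepsilon}^{\ast}$, since by a routine duality computation $\pazocal{P}_{\mu}^{\ast}$ is the adjoint of $\pazocal{P}_{\mu}$ on $L^{2}(\mu)$ and therefore also bounded with the same norm.

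For the sufficiency direction, three points must be checked in order to apply \cite[Theorem 3.21]{To4}. First, $K(\overline{x},\overline{y})=P(\overline{x}-\overline{y})$ is an $n$-dimensional Calderón--Zygmund kernel with regularity exponent $\eta=1$; this was already recorded in Definition \ref{def3.1}, the size bound $P(\overline{x})\lesssim |\overline{x}|^{-n}$ being obvious from $|t|\leq |\overline{x}|$ and the Hölder-type smoothness following from a componentwise mean value theorem exactly as in the proof of \cite[Lemma 2.1]{MPr} (the same argument already appeared in the proof of Lemma \ref{lem3.6}). Second, $\mu\in \Sigma(E)$ has $n$-growth with constant $1$ by definition, which is precisely the growth hypothesis required by the cited theorem. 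Third, the two testing conditions assumed in the statement are exactly the $T1$ testing conditions of \cite[Theorem 3.21]{To4} (the slightly weaker form with $\mu(2Q)^{1/2}$ on the right-hand side is the one stated there, and it is the form coming from the refinements in \cite{NTV2, To2}). Once these three points are verified, \cite[Theorem 3.21]{To4} yields the uniform $L^{2}(\mu)$-boundedness of $\pazocal{P}_{\mu,\varepsilon}$ in $\varepsilon>0$, with norm controlled by the testing constant $c$ and the growth constant of $\mu$.

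The main obstacle is not in the present paper at all: the hard work is packed into the non-doubling $T1$ theorem itself, whose proof relies on the random dyadic lattices of Nazarov--Treil--Volberg together with the good-$\lambda$ decomposition for non-doubling measures. What remains for us is purely a matter of matching hypotheses, and the only point specific to the $1/2$-heat setting is the kernel regularity, which has already been verified.
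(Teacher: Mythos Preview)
Your proposal is correct and matches the paper's treatment exactly: the paper does not prove Theorem \ref{thm3.7} but merely states it as a specialization of \cite[Theorem 3.21]{To4} (originally \cite{NTV}, refined in \cite{NTV2, To2}), and your plan of verifying that $K(\overline{x},\overline{y})=P(\overline{x}-\overline{y})$ is an $n$-dimensional Calder\'on--Zygmund kernel and that $\mu\in\Sigma(E)$ has the required growth is precisely the matching of hypotheses the paper leaves implicit. Nothing further is needed.
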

	In light of such result, the following lemma follows from the nonnegativity of $P$:
	\begin{lem}
		\label{lem3.8}
		For $E\subset \mathbb{R}^{n+1}$ compact subset,
		\begin{equation*}
			\widetilde{\gamma}_{+}'(E)\lesssim \gamma_{2,+}(E).
		\end{equation*}
	\end{lem}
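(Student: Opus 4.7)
The plan is to apply Theorem~\ref{thm3.7} (the $T1$-type criterion of Nazarov--Treil--Volberg in its refined form) to upgrade the $L^\infty(\mu)$-control built into the definition of $\widetilde{\gamma}_+'$ to an $L^2(\mu)\to L^2(\mu)$ bound for $\pazocal{P}_\mu$, which is exactly the requirement for admissibility in $\gamma_{2,+}$. Concretely, starting from $\mu\in\Sigma(E)$ with $\|P\ast\mu\|_{L^\infty(\mu)}\leq 1$ and $\|P^{\ast}\ast\mu\|_{L^\infty(\mu)}\leq 1$ (the latter understood uniformly in $\varepsilon$ via the truncated definition of $\|\cdot\|_{L^p(\mu)}$), and noting that $P(\overline{x}-\overline{y})$ is an $n$-dimensional Calderón--Zygmund kernel (Definition~\ref{def3.1}) while $\mu$ has $n$-growth, one only needs to verify the two testing conditions of Theorem~\ref{thm3.7} on arbitrary cubes $Q$, uniformly in $\varepsilon>0$.

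The key input is the \emph{nonnegativity} of $P$ and $P^{\ast}$, precisely the property highlighted before the statement. Fix any cube $Q\subset\mathbb{R}^{n+1}$, any $\varepsilon>0$ and any $\overline{x}\in \text{supp}(\mu)$. Since $P\geq 0$, dropping the restriction $\overline{y}\in Q$ only enlarges the integral, so
\begin{equation*}
0\leq \pazocal{P}_{\mu,\varepsilon}\chi_Q(\overline{x}) = \int_{\substack{|\overline{x}-\overline{y}|>\varepsilon\\ \overline{y}\in Q}} P(\overline{x}-\overline{y})\,\text{d}\mu(\overline{y}) \leq \int_{|\overline{x}-\overline{y}|>\varepsilon} P(\overline{x}-\overline{y})\,\text{d}\mu(\overline{y}) = \pazocal{P}_{\mu,\varepsilon}1(\overline{x}).
\end{equation*}
The hypothesis $\|P\ast\mu\|_{L^\infty(\mu)}\leq 1$ then gives $\pazocal{P}_{\mu,\varepsilon}\chi_Q(\overline{x})\leq 1$ for $\mu$-a.e.\ $\overline{x}$, uniformly in $\varepsilon$. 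Integrating over $Q$ yields the first testing condition,
\begin{equation*}
\|\pazocal{P}_{\mu,\varepsilon}\chi_Q\|_{L^2(\mu|_Q)}^2 \leq \mu(Q) \leq \mu(2Q),
\end{equation*}
with absolute constant $1$. The identical argument applied to $\pazocal{P}_\mu^{\ast}$, using $P^{\ast}\geq 0$ and $\|P^{\ast}\ast\mu\|_{L^\infty(\mu)}\leq 1$, produces the corresponding bound for the transpose. Theorem~\ref{thm3.7} then delivers an absolute constant $C_0>0$ with $\|\pazocal{P}_\mu\|_{L^2(\mu)\to L^2(\mu)}\leq C_0$.

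To conclude, I would rescale: set $c:=\max(C_0,1)$ and $\nu:=\mu/c$. Then $\nu\in\Sigma(E)$ (the $n$-growth constant decreases to $1/c\leq 1$), and the standard scaling $\pazocal{P}_\nu=c^{-1}\pazocal{P}_\mu$, $\|\cdot\|_{L^2(\nu)}=c^{-1/2}\|\cdot\|_{L^2(\mu)}$ gives
\begin{equation*}
\|\pazocal{P}_\nu\|_{L^2(\nu)\to L^2(\nu)} = c^{-1}\|\pazocal{P}_\mu\|_{L^2(\mu)\to L^2(\mu)}\leq 1,
\end{equation*}
so $\nu$ is admissible for $\gamma_{2,+}(E)$ and $\gamma_{2,+}(E)\geq \nu(E)=\mu(E)/c$. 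Taking the supremum over all $\mu$ admissible for $\widetilde{\gamma}_+'(E)$ finishes the proof. I do not expect a substantive obstacle; the only point requiring mild care is ensuring that the $L^\infty(\mu)$-hypotheses transfer to \emph{uniform}-in-$\varepsilon$ pointwise bounds for the truncations, which is built into the convention $\|\pazocal{P}_\mu f\|_{L^\infty(\mu)}=\sup_\varepsilon \|\pazocal{P}_{\mu,\varepsilon}f\|_{L^\infty(\mu)}$ introduced earlier.
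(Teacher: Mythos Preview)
Your proposal is correct and follows essentially the same approach as the paper: use the nonnegativity of $P$ and $P^{\ast}$ to dominate $\pazocal{P}_{\mu,\varepsilon}\chi_Q$ pointwise by $\pazocal{P}_{\mu,\varepsilon}1$, obtain the testing conditions of Theorem~\ref{thm3.7}, and conclude $L^2(\mu)$-boundedness of $\pazocal{P}_\mu$ with an absolute constant. The only cosmetic difference is that you spell out the final rescaling step explicitly, whereas the paper leaves it implicit in the phrase ``we deduce the desired estimate.''
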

	
	\begin{proof}
		Let $\mu$ be admissible for $ \widetilde{\gamma}_{+}'(E)$. Since by definition of this capacity we have $\|\pazocal{P}_{\mu,\varepsilon}1\|_{L^\infty(\mu)}\leq 1$ and $\|\pazocal{P}_{\mu,\varepsilon}^{\ast}1\|_{L^\infty(\mu)}\leq 1$ uniformly on $\varepsilon>0$, and both the kernel $P$ and measure $\mu$ are nonnegative, we have for every $\varepsilon>0$ and every cube $Q\subset \mathbb{R}^{n+1}$,
		\begin{align*}
			\|\pazocal{P}_{\mu,\varepsilon}&\chi_Q\|_{L^2(\mu|_Q)}=\Bigg( \int_Q\bigg\rvert \int_{Q\cap \{|\overline{x}-\overline{y}|>\varepsilon\}} P(\overline{x}-\overline{y})\text{d}\mu(\overline{y}) \bigg\rvert^2\text{d}\mu(\overline{x}) \Bigg)^{1/2}\\
			&\leq \Bigg( \int_Q\bigg\rvert \int_{|\overline{x}-\overline{y}|>\varepsilon} P(\overline{x}-\overline{y})\text{d}\mu(\overline{y}) \bigg\rvert^2\text{d}\mu(\overline{x}) \Bigg)^{1/2} \leq \|\pazocal{P}_{\mu,\varepsilon}1\|_{L^\infty(\mu)}\,\mu(Q)^{1/2}\leq \mu(2Q)^{1/2},
		\end{align*}
		and analogously for $\pazocal{P}_{\mu,\varepsilon}^{\ast}$. Therefore, by a direct application of Theorem \ref{thm3.7} we deduce the desired estimate.
	\end{proof}
	
	Combining all the above results we deduce the following corollary, which encapsulates the different ways to understand $\widetilde{\gamma}_+$:
	
	\begin{cor}
		\label{cor3.9}
		For $E\subset \mathbb{R}^{n+1}$ compact subset,
		\begin{equation*}
			\widetilde{\gamma}_+(E)\approx \widetilde{\gamma}_{+}'(E) \approx \gamma_{2,+}(E) \approx \gamma_{\text{\normalfont{sy}},+}(E).
		\end{equation*}
	\end{cor}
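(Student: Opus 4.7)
The plan is to observe that every ingredient in the four-way equivalence has already been proved in the previous lemmas and in Theorem \ref{thm3.4}; the corollary is essentially a matter of threading these estimates together so that each of the four capacities is shown to be comparable to $\widetilde{\gamma}_+$. The chain I would write down is the following.

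First I would record the two comparisons that are already ``one-line'' consequences of what is on the page: Lemma \ref{lem3.5} gives $\widetilde{\gamma}_+(E) \approx \gamma_{\text{sy},+}(E)$ with explicit constant $2$, and Theorem \ref{thm3.4} gives $\widetilde{\gamma}_+(E) \approx \gamma_{2,+}(E)$. Together these pin down two of the four corners of the equivalence to $\widetilde{\gamma}_+(E)$.

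Next I would close the loop involving $\widetilde{\gamma}_+'$ by combining Lemmas \ref{lem3.6} and \ref{lem3.8} with the previous step. More precisely, Lemma \ref{lem3.6} supplies the bound $\widetilde{\gamma}_+(E) \lesssim \widetilde{\gamma}_+'(E)$, while Lemma \ref{lem3.8} supplies $\widetilde{\gamma}_+'(E) \lesssim \gamma_{2,+}(E)$. Chaining these with Theorem \ref{thm3.4} gives
\begin{equation*}
\widetilde{\gamma}_+(E) \lesssim \widetilde{\gamma}_+'(E) \lesssim \gamma_{2,+}(E) \lesssim \widetilde{\gamma}_+(E),
\end{equation*}
so all four quantities sandwich each other up to absolute constants and the corollary follows.

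Since every one of the four inequalities in the chain is already proved, there is no real obstacle; the only thing to double-check is bookkeeping, namely that the directions in Lemmas \ref{lem3.5}, \ref{lem3.6}, \ref{lem3.8} and Theorem \ref{thm3.4} are oriented so that they actually compose into a single closed loop. The non-trivial analytic content sits inside the cited results themselves: the nonnegativity of $P$ (used in Lemmas \ref{lem3.5} and \ref{lem3.8} to bypass cancellation), the regularization-plus-Lebesgue-differentiation argument of Lemma \ref{lem3.6} that upgrades an $L^\infty(\pazocal{L}^{n+1})$ bound on $P\ast \mu$ to an $L^\infty(\mu)$ bound, and the $T(1)$-type theorem (Theorem \ref{thm3.7}) that converts testing conditions for $\pazocal{P}_\mu$ into genuine $L^2(\mu)$-boundedness. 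So the proof reduces to writing one sentence per inclusion and observing that the resulting cycle of $\lesssim$'s forces all four capacities to be comparable.
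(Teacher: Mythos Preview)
Your proposal is correct and matches the paper's approach exactly: the paper gives no proof beyond the sentence ``Combining all the above results we deduce the following corollary,'' and your chain $\widetilde{\gamma}_+(E) \lesssim \widetilde{\gamma}_+'(E) \lesssim \gamma_{2,+}(E) \approx \widetilde{\gamma}_+(E)$ together with $\widetilde{\gamma}_+(E)\approx \gamma_{\text{sy},+}(E)$ is precisely the intended combination of Lemma~\ref{lem3.5}, Lemma~\ref{lem3.6}, Lemma~\ref{lem3.8}, and Theorem~\ref{thm3.4}.
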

	
	To be able to compare $\gamma_{\Theta^{1/2},+}$ and $\widetilde{\gamma}_+$, we will need two additional definitions, firstly introduced in \cite{To1}. We remark that 
	\begin{itemize}
		\item[] in the forthcoming Definitions \ref{def3.2} and \ref{def3.3}, as well as Lemma \ref{lem3.10}, $\mu$ \textit{will always be a positive compactly supported Borel regular measure on $\mathbb{R}^{n+1}$ with $n$-growth}.
	\end{itemize}
	Notice that the previous conditions ensure that the degree of growth of $\mu$ is the same as the degree of homogeneity of $P$, understood as an $n$-dimensional Calderón-Zygmund kernel on $\mathbb{R}^{n+1}$. Moreover, as we have pointed out in the proof of Proposition \ref{prop3.1}, observe that $\mu$ is locally finite and therefore becomes a Radon measure.
	
	\begin{defn}[$\text{BMO}_\rho(\mu)$]
		\label{def3.2}
		Given $\rho>1$ and $f\in L^1_{\text{loc}}(\mu)$, we say that $f$ belongs to $\textit{BMO}_\rho(\mu)$ if for some constant $c>0$,
		\begin{equation*}
			\sup_{Q} \frac{1}{\mu(\rho Q)} \int_Q \big\rvert f(\overline{x}) - f_{Q,\mu}\big\rvert \text{d}\mu(\overline{x}) \leq c,
		\end{equation*}
		where the supremum is taken among all cubes such that $\mu(Q)\neq 0$, and $f_{Q,\mu}$ is the average of $f$ in $Q$ with respect to $\mu$. The infimum over all values $c$ satisfying the above inequality is the so-called $\textit{BMO}_\rho(\mu)$ \textit{norm of} $f$.
	\end{defn}
	Notice that if $f\in L^\infty(\mu)$, then $f\in \text{BMO}_\rho(\mu)$ for any $\rho>1$. Moreover, for any $a\in \mathbb{R}$ and any cube $Q\subset \mathbb{R}^{n+1}$,
	\begin{align*}
		\int_Q \big\rvert f(\overline{x}) &-f_{Q,\mu}\big\rvert \text{d}\mu(\overline{x}) \\
		&\leq \Bigg[  \int_Q \big\rvert f(\overline{x}) - a \big\rvert \text{d}\mu(\overline{x}) + \int_Q \big\rvert a - f_{Q,\mu}\big\rvert \text{d}\mu(\overline{x}) \Bigg] \leq 2 \int_Q \big\rvert f(\overline{x}) - a\big\rvert \text{d}\mu(\overline{x}).
	\end{align*}
	Therefore, if for each cube $Q$ with $\mu(Q)\neq 0$ we are able to find $c_Q$ so that
	\begin{equation*}
		\frac{1}{\mu(\rho Q)}\int_Q \big\rvert f(\overline{x}) - c_Q\big\rvert \text{d}\mu(\overline{x}) \leq c,
	\end{equation*}
	where $c$ is constant independent of $Q$, we deduce that $f$ belongs to $\text{BMO}_\rho(\mu)$.
	
	\begin{defn}[$\mu$-weakly bounded]
		\label{def3.3}
		We will say that the operator $\pazocal{P}_\mu$ is \textit{$\mu$-weakly bounded} if for any cube $Q\subset \mathbb{R}^{n+1}$,
		\begin{align*}
			\big\rvert \big\langle \pazocal{P}_{\mu,\varepsilon} \chi_{Q}, \chi_{Q} \big\rangle \big\rvert := \Bigg\rvert \int_Q \bigg( \int_{Q\cap \{|\overline{x}-\overline{y}|>\varepsilon\}} P(\overline{x}-\overline{y})\text{d}\mu(\overline{y})\bigg)\text{d}\mu(\overline{x}) \Bigg\rvert \leq c \mu(2Q),
		\end{align*}
		uniformly on $\varepsilon>0$.
	\end{defn}
	
	\begin{lem}
		\label{lem3.10}
		Assume that $\|P\ast \mu\|_{L^{\infty}(\mu)}\leq 1$. Then, the operator $\pazocal{P}_\mu$ is $\mu$-weakly bounded and $P^{\ast}\ast \mu \in \text{\normalfont{BMO}}_\rho(\mu)$ for $\rho\geq 2$.
	\end{lem}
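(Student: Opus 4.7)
My plan splits the lemma into its two conclusions and handles each separately, exploiting repeatedly that the kernel $P$ is nonnegative. Throughout, I will work with the truncated operators $\pazocal{P}_{\mu,\varepsilon}$ (and their adjoints) in order to keep estimates uniform in $\varepsilon > 0$.

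The weak boundedness assertion is expected to fall out immediately from positivity. For any cube $Q \subset \mathbb{R}^{n+1}$ and any $\varepsilon > 0$, since $P \geq 0$ I have
\begin{equation*}
\big|\langle \pazocal{P}_{\mu,\varepsilon}\chi_Q, \chi_Q\rangle\big| = \int_Q \int_{Q \cap \{|\overline{x}-\overline{y}|>\varepsilon\}} P(\overline{x}-\overline{y})\,\text{d}\mu(\overline{y})\,\text{d}\mu(\overline{x}) \leq \int_Q \pazocal{P}_{\mu,\varepsilon}1(\overline{x})\,\text{d}\mu(\overline{x}).
\end{equation*}
The hypothesis unpacks as $\sup_\varepsilon \|\pazocal{P}_{\mu,\varepsilon}1\|_{L^\infty(\mu)} \leq 1$, so $\pazocal{P}_{\mu,\varepsilon}1 \leq 1$ $\mu$-a.e. for every $\varepsilon$, and the right-hand side is at most $\mu(Q) \leq \mu(2Q)$.

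For the BMO claim my plan is the classical Calderón--Zygmund splitting into a near part and a far part. Fix a cube $Q$ with $\mu(Q)>0$ and $\rho \geq 2$, pick any $\overline{x}_Q \in Q$, and set
\begin{equation*}
c_Q := \int_{\mathbb{R}^{n+1}\setminus \rho Q} P^{\ast}(\overline{x}_Q - \overline{y})\,\text{d}\mu(\overline{y}),
\end{equation*}
which is an absolutely convergent finite integral because $\mu$ is compactly supported and $|\overline{x}_Q - \overline{y}| \gtrsim \ell(Q)$ on the integration region. By the remark after Definition \ref{def3.2} it is enough to show $\mu(\rho Q)^{-1}\int_Q |P^{\ast}\ast \mu - c_Q|\,\text{d}\mu \lesssim 1$, and I decompose $P^{\ast}\ast\mu(\overline{x}) - c_Q = A(\overline{x}) + B(\overline{x})$ for $\overline{x} \in Q$, with $A(\overline{x}) := \int_{\rho Q} P^{\ast}(\overline{x} - \overline{y})\text{d}\mu(\overline{y})$ and $B(\overline{x}) := \int_{\mathbb{R}^{n+1}\setminus \rho Q}[P^{\ast}(\overline{x} - \overline{y}) - P^{\ast}(\overline{x}_Q - \overline{y})]\,\text{d}\mu(\overline{y})$.

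For the near piece $A$ I symmetrise: since $P^{\ast}(\overline{x} - \overline{y}) = P(\overline{y} - \overline{x})$, Fubini combined with monotone convergence yields
\begin{equation*}
\int_Q A\,\text{d}\mu = \int_{\rho Q}\Big(\int_Q P(\overline{y} - \overline{x})\text{d}\mu(\overline{x})\Big)\text{d}\mu(\overline{y}) \leq \int_{\rho Q}\Big(\int P(\overline{y} - \overline{x})\text{d}\mu(\overline{x})\Big)\text{d}\mu(\overline{y}) \leq \mu(\rho Q),
\end{equation*}
the last step using that the hypothesis forces $\int P(\overline{y} - \overline{x})\text{d}\mu(\overline{x}) \leq 1$ for $\mu$-a.e. $\overline{y}$. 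For the far piece $B$ I invoke the observation after Definition \ref{def3.1} that $P^{\ast}$ is an $n$-dimensional Calderón--Zygmund kernel with $\eta = 1$. The choice $\rho \geq 2$ keeps $|\overline{x} - \overline{y}|$ comparable to $|\overline{x}_Q - \overline{y}|$ on the far region, so the kernel regularity delivers $|P^{\ast}(\overline{x} - \overline{y}) - P^{\ast}(\overline{x}_Q - \overline{y})| \lesssim \ell(Q)/|\overline{x} - \overline{y}|^{n+1}$. A dyadic annular decomposition around $\overline{x}$ together with the $n$-growth of $\mu$ reduces this to a convergent geometric series, giving $|B(\overline{x})| \lesssim 1$ uniformly in $\overline{x} \in Q$. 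Summing the two contributions yields $\int_Q |P^{\ast}\ast \mu - c_Q|\,\text{d}\mu \lesssim \mu(\rho Q) + \mu(Q) \lesssim \mu(\rho Q)$, as required.

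The main obstacle I anticipate is purely bookkeeping: the hypothesis is phrased in terms of truncations of $P\ast\mu$, while the Fubini step for $A$ needs a bound on the untruncated integral $\int P(\overline{y} - \overline{x})\text{d}\mu(\overline{x})$. Positivity of $P$ combined with monotone convergence identifies this integral with $\sup_\varepsilon \pazocal{P}_{\mu,\varepsilon}1(\overline{y})$, so the hypothesis transfers cleanly; running the same reasoning on the truncated versions $\pazocal{P}^{\ast}_{\mu,\varepsilon}1$ then yields the BMO bound uniformly in $\varepsilon$. Beyond that technicality, the argument is the textbook BMO estimate for Calderón--Zygmund operators, specialised to the positive kernel $P^{\ast}$.
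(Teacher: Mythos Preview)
Your proof is correct and follows essentially the same route as the paper's: weak boundedness from positivity of $P$, and the $\text{BMO}_\rho(\mu)$ estimate via a near/far decomposition, handling the near term by Tonelli plus the hypothesis $\|P\ast\mu\|_{L^\infty(\mu)}\leq 1$ and the far term by the Calder\'on--Zygmund regularity of $P^{\ast}$ combined with the $n$-growth of $\mu$ over dyadic annuli. The only cosmetic differences are that the paper splits at $2Q$ (rather than $\rho Q$) and takes the reference point to be the center of $Q$ (rather than an arbitrary $\overline{x}_Q\in Q$), and it also inserts a one-line Tonelli check that $P^{\ast}\ast\mu\in L^1(\mu)$ before starting the BMO estimate; none of these changes the argument.
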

	\begin{proof}
		Condition $\|P\ast \mu\|_{L^{\infty}(\mu)}\leq 1$ can be simply rewritten as $\sup_{\varepsilon>0}\|\pazocal{P}_{\mu,\varepsilon}1\|_{L^{\infty}(\mu)}\leq 1$; so for any $\varepsilon>0$, by the nonnegativity of $P$ and $\mu$,
		\begin{align*}
			\Bigg\rvert \int_Q \bigg( \int_{Q\cap \{|\overline{x}-\overline{y}|>\varepsilon\}} &P(\overline{x}-\overline{y})\text{d}\mu(\overline{y})\bigg)\text{d}\mu(\overline{x})  \Bigg\rvert 
			\leq \|\pazocal{P}_{\mu,\varepsilon}1\|_{L^{\infty}(\mu)}\mu(Q) \leq \mu(2Q).
		\end{align*}
		Hence $\pazocal{P}_\mu$ is $\mu$-weakly bounded. Finally, notice that by Tonelli's theorem,
		\begin{equation*}
			\int_{\mathbb{R}^{n+1}} P^{\ast}\ast \mu(\overline{x}) \text{d}\mu(\overline{x}) = \int_{\mathbb{R}^{n+1}} P\ast \mu(\overline{y}) \text{d}\mu(\overline{y}) \leq \mu(\mathbb{R}^{n+1})<\infty,
		\end{equation*}
		so $P^{\ast}\ast \mu \in L^1(\mu) \subset L^1_{\text{loc}}(\mu)$, and $P^{\ast}\ast \mu$ is indeed a candidate to belong to $\text{BMO}_\rho(\mu)$. To estimate its $\text{BMO}_\rho(\mu)$ norm, fix a cube $Q=Q(\overline{x}_0,\ell(Q)), \,\mu(Q)\neq 0$ and consider the characteristic function $\chi_{2Q}$ associated with $2Q$. Denote also $\chi_{2Q^c}:=1-\chi_{2Q}$. Consider the constant
		\begin{equation*}
			c_Q:=P^{\ast}\ast (\chi_{2Q^c}\mu)(\overline{x}_0),  
		\end{equation*}
		that is an expression pointwise well-defined, since $\overline{x}_0 \notin \text{supp}(\chi_{2Q^c}\mu)$. Indeed,
		\begin{align*}
			c_Q = \int_{\mathbb{R}^{n+1}\setminus{2Q}}P^{\ast}(\overline{x}_0-\overline{z})\text{d}\mu(\overline{z})&\leq \int_{\mathbb{R}^{n+1}\setminus{2Q}}\frac{\text{d}\mu(\overline{z})}{|\overline{x}_0-\overline{z}|^{n}}\leq \frac{\mu(\mathbb{R}^{n+1})}{(2\ell(Q))^{n}}<\infty.
		\end{align*}
		Observe that the following estimate holds
		\begin{align*}
			\frac{1}{\mu(\rho Q)}&\int_Q |P^{\ast}\ast \mu(\overline{y})-c_Q|\text{d}\mu(\overline{y})\\
			&\leq \frac{1}{\mu(\rho Q)}\int_Q P^{\ast}\ast (\chi_{2Q}\mu)(\overline{y}) \text{d}\mu(\overline{y})\\
			&\hspace{1.4cm}+\frac{1}{\mu(\rho Q)}\int_Q|P^{\ast}\ast (\chi_{2Q^c}\mu)(\overline{y})-P^{\ast}\ast (\chi_{2Q^c}\mu)(\overline{x}_0)|\text{d}\mu(\overline{y})\\
			&\leq \frac{1}{\mu(\rho Q)}\int_Q\bigg(\int_{2Q}P^{\ast}(\overline{y}-\overline{z}) \text{d}\mu(\overline{z})\bigg)\text{d}\mu(\overline{y})\\
			&\hspace{1.4cm}+\frac{1}{\mu(\rho Q)}\int_Q\bigg( \int_{\mathbb{R}^{n+1}\setminus{2Q}}|P^{\ast}(\overline{y}-\overline{z})-P^{\ast}(\overline{x}_0-\overline{z})| \text{d}\mu(\overline{z})\bigg) \text{d}\mu(\overline{y}) =:I_1+I_2.
		\end{align*}
		For $I_1$, by Tonelli's theorem together with the assumptions $\|P\ast \mu\|_{L^\infty(\mu)}\leq 1$ and $\rho\geq 2$,
		\begin{align*}
			I_1 = \frac{1}{\mu(\rho Q)}\int_{2Q} P\ast \mu (\overline{z})\text{d}\mu(\overline{z}) \leq \frac{\mu(2Q)}{\mu(\rho Q)} \leq 1.
		\end{align*}
		For $I_2$, apply the third estimate of \cite[Lemma 2.1]{MPr}, Theorem \ref{thm2.2} and split the domain of integration into annuli to obtain
		\begin{align*}
			I_2 &\lesssim \frac{1}{\mu(\rho Q)}\int_Q\bigg( \int_{\mathbb{R}^{n+1}\setminus{2Q}}\frac{|\overline{y}-\overline{x}_0|}{|\overline{z}-\overline{x}_0|^{n+1}} \text{d}\mu(\overline{z})\bigg) \text{d}\mu(\overline{y}) \leq  \ell(Q)\frac{\mu(Q)}{\mu(\rho Q)} \int_{\mathbb{R}^{n+1}\setminus{2Q}}  \frac{\text{d}\mu(\overline{z})}{|\overline{z}-\overline{x}_0|^{n+1}}\\
			&\leq  \ell(Q) \sum_{j=1}^{\infty}\int_{2^{j+1}Q\setminus{2^jQ}}\frac{\text{d}\mu(\overline{z})}{|\overline{z}-\overline{x}_0|^{n+1}}\lesssim \ell(Q)\sum_{j=1}^{\infty}\frac{(2^{j+1}\ell(Q))^{n}}{(2^j\ell(Q))^{n+1}}\lesssim\sum_{j=1}^\infty\frac{1}{2^j} = 1,
		\end{align*}
		and the desired result follows.
	\end{proof}
	The previous lemma allows us to prove the main result of this subsection:
	\begin{thm}
		\label{thm3.11}
		Let $E\subset \mathbb{R}^{n+1}$ be a compact subset. Then, if $\gamma_+:=\gamma_{\Theta^{1/2},+}$,
		\begin{equation*}
			\gamma_+(E)\approx \widetilde{\gamma}_+(E).
		\end{equation*}
	\end{thm}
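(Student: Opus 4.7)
The inequality $\widetilde{\gamma}_+(E)\leq \gamma_+(E)$ is already established right after Definition 2.1, so the entire task reduces to proving the reverse estimate $\gamma_+(E)\lesssim \widetilde{\gamma}_+(E)$. By Corollary \ref{cor3.9} it suffices to show $\gamma_+(E)\lesssim \gamma_{2,+}(E)$, i.e.\ to upgrade any measure $\mu$ admissible for $\gamma_+(E)$ into one (up to a dimensional renormalization) that is admissible for $\gamma_{2,+}(E)$. So I would fix such a $\mu$, with $\mathrm{supp}(\mu)\subseteq E$ and $\|P\ast\mu\|_\infty\leq 1$, and aim to prove $\|\pazocal{P}_\mu\|_{L^2(\mu)\to L^2(\mu)}\lesssim 1$.

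The first step is to verify that $\mu$ has controlled $n$-growth: applying Theorem \ref{thm2.2} to the distribution induced by $\mu$ yields $\mu(B(\overline{x},r))\leq C_0 r^n$ with $C_0$ dimensional, so $\mu/C_0\in\Sigma(E)$. Next, the pointwise bound $\|P\ast\mu\|_\infty\leq 1$ can be promoted to a $\mu$-a.e.\ bound through the observation \eqref{eq3.2}, giving $\|P\ast\mu\|_{L^\infty(\mu)}\lesssim 1$. At this point the hypotheses of Lemma \ref{lem3.10} are met (after a harmless dimensional renormalization of $\mu$), and it delivers two crucial facts: the operator $\pazocal{P}_\mu$ is $\mu$-weakly bounded, and $P^\ast\ast\mu\in\mathrm{BMO}_\rho(\mu)$ for any $\rho\geq 2$. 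Moreover, $P\ast\mu\in L^\infty(\mu)$ obviously lies in $\mathrm{BMO}_\rho(\mu)$ as well.

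The heart of the argument is then a $T(1)$-type theorem for $n$-dimensional Calderón--Zygmund operators with respect to non-doubling $n$-growth measures: if the kernel is Calderón--Zygmund of dimension $n$ (which $P$ is, by Definition \ref{def3.1}), if the associated operator is $\mu$-weakly bounded, and if $\pazocal{P}_\mu 1$ and $\pazocal{P}_\mu^\ast 1$ both lie in $\mathrm{BMO}_\rho(\mu)$, then $\pazocal{P}_\mu$ extends to a bounded operator on $L^2(\mu)$, with operator norm controlled by the three quantities above. This is the $T(1)$ theorem of Nazarov--Treil--Volberg in its non-doubling form (see \cite{To4}, and compare with the statement of Theorem \ref{thm3.7}, which is the testing-conditions version). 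Invoking it here produces an absolute constant $C_1$ with $\|\pazocal{P}_\mu\|_{L^2(\mu)\to L^2(\mu)}\leq C_1$.

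Putting it together, the renormalized measure $\mu/(C_0 C_1)$ belongs to $\Sigma(E)$ and satisfies the $L^2(\mu)$-bound condition in the definition of $\gamma_{2,+}(E)$, so $\mu(E)\lesssim \gamma_{2,+}(E)\approx \widetilde{\gamma}_+(E)$ by Corollary \ref{cor3.9}. Taking the supremum over all admissible $\mu$ yields $\gamma_+(E)\lesssim \widetilde{\gamma}_+(E)$. The principal obstacle is the invocation of the $T(1)$ theorem in the non-doubling setting: one must make sure that the hypotheses as stated in the reference genuinely apply to the kernel $P$ (which Definition \ref{def3.1} records is an $n$-dimensional CZ kernel with Hölder exponent $\eta=1$), and that the BMO assumption there is of the $\mathrm{BMO}_\rho$ flavor used in Lemma \ref{lem3.10}, rather than the classical BMO with respect to the cube itself; this discrepancy is precisely what makes the non-doubling machinery nontrivial, but it is exactly the setting addressed in the references cited before Theorem \ref{thm3.7}.
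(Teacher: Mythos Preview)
Your proposal is correct and follows essentially the same route as the paper: reduce to $\gamma_+(E)\lesssim\gamma_{2,+}(E)$, use Theorem \ref{thm2.2} for $n$-growth, use \eqref{eq3.2} and Lemma \ref{lem3.10} to verify the $\mathrm{BMO}_\rho(\mu)$ and weak boundedness hypotheses, and then invoke the non-doubling $T(1)$ theorem (the paper cites \cite[Theorem 1.3]{To2} specifically) to obtain $L^2(\mu)$-boundedness. Your closing caveat about matching the $\mathrm{BMO}_\rho$ hypothesis is well taken, and indeed the reference \cite{To2} is formulated precisely in that framework.
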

	\begin{proof}
		It suffices to prove $\gamma_+(E)\lesssim \widetilde{\gamma}_+(E)$. To this end, consider $\mu$ an admissible measure for $\gamma_+(E)$, i.e. $\mu$ is a positive Borel regular measure supported on $E$ such that $\|P\ast \mu\|_{\infty}\leq 1$. We know by Theorem \ref{thm2.2} that $\mu$ has $n$-growth with an absolute dimensional constant $C>0$. Hence, up to such constant, $\mu \in \Sigma(E)$. Moreover, by relation \eqref{eq3.2} and Lemma \ref{lem3.10} we have, for any $\rho \geq 2$,
		\begin{enumerate}[nolistsep]
			\item[\textit{1}.] $\pazocal{P}_\mu 1\in L^\infty(\mu)$ and thus $\pazocal{P}_\mu 1 \in \text{BMO}_{\rho}(\mu)$,
			\item[\textit{2}.] $\pazocal{P}^{\ast}_\mu 1 \in \text{BMO}_{\rho}(\mu)$,
			\item[\textit{3}.] $\pazocal{P}_\mu 1$ is $\mu$-weakly bounded.
		\end{enumerate}
		Applying a suitable $T1$-theorem, namely \cite[Theorem 1.3]{To2}, we deduce $\|\pazocal{P}_\mu\|_{L^2(\mu)\to L^2(\mu)}\lesssim 1$. This implies (again, up to a dimensional constant) that $\mu$ is admissible for $\gamma_{2,+}(E)$; and by the arbitrariness of $\mu$ we have $\gamma_{+}(E)\lesssim \gamma_{2,+}(E)$. So we conclude, by Theorem \ref{thm3.4},
		\begin{equation*}
			\gamma_{+}(E)\lesssim \gamma_{2,+}(E)\approx \widetilde{\gamma}_+(E).
		\end{equation*}
	\end{proof}
	A particular consequence of the last result is
	\begin{equation*}
		\gamma_{\Theta^{1/2},+}(E)\approx \gamma_{\text{\normalfont{sy}},+}(E) = \sup_{\mu}\Big\{ \mu(E)\;:\; \mu\in \Sigma(E),\; \|P_{\text{\normalfont{sy}}}\ast \mu\|_{\infty}\leq 1  \Big\}.
	\end{equation*}
	Since the same proof of Proposition \ref{prop3.2} yields that $\gamma_{\text{\normalfont{sy}},+}$ is invariant under temporal reflections, we also obtain
	\begin{equation*}
		\gamma_{\Theta^{1/2},+}(E) \approx \gamma_{\text{\normalfont{sy}},+}(E)=\gamma_{\text{\normalfont{sy}},+}(\pazocal{R}_t(E))\approx  \gamma_{\Theta^{1/2},+}(\pazocal{R}_t(E)) =  \gamma_{\overline{\Theta}^{1/2},+}(E).
	\end{equation*}
	
	\begin{cor}
		\label{cor3.12}
		The capacities $ \gamma_{\Theta^{1/2},+}, \gamma_{\overline{\Theta}^{1/2},+}$ and $\gamma_{\text{\normalfont{sy}},+}$ are comparable.
	\end{cor}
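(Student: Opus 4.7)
The proof is essentially a bookkeeping exercise chaining the results of this section, and the discussion immediately preceding the corollary already lays out the roadmap; my plan is simply to make it explicit.

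The first comparability, $\gamma_{\Theta^{1/2},+}\approx \gamma_{\text{\normalfont{sy}},+}$, follows by concatenating two prior results. Theorem \ref{thm3.11} gives $\gamma_{\Theta^{1/2},+}(E)\approx \widetilde{\gamma}_+(E)$, and Corollary \ref{cor3.9} provides $\widetilde{\gamma}_+(E)\approx \gamma_{\text{\normalfont{sy}},+}(E)$. Transitivity of $\approx$ (which holds with dimensional implicit constants) yields the claim.

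For the comparability with $\gamma_{\overline{\Theta}^{1/2},+}$, the key observation is that $P_{\text{\normalfont{sy}}}(\overline{x})=|t|/(2|\overline{x}|^{n+1})$ is invariant under the temporal reflection $\pazocal{R}_t$, since $|{-t}|=|t|$ and $|\pazocal{R}_t(\overline{x})|=|\overline{x}|$. Therefore, running the same push-forward argument as in Proposition \ref{prop3.2} (define $\mu_t(X):=\mu(\pazocal{R}_t^{-1}(X))$ and change variables in the convolution) shows that admissibility with respect to $P_{\text{\normalfont{sy}}}$ is preserved under $\pazocal{R}_t$, and the $n$-growth condition is also preserved since $\pazocal{R}_t$ is an isometry. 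Consequently $\gamma_{\text{\normalfont{sy}},+}(E)=\gamma_{\text{\normalfont{sy}},+}(\pazocal{R}_t(E))$.

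Combining this with the second assertion of Proposition \ref{prop3.2}, namely $\gamma_{\Theta^{1/2},+}(\pazocal{R}_t(E))=\gamma_{\overline{\Theta}^{1/2},+}(E)$, and applying the already-proven $\gamma_{\Theta^{1/2},+}\approx \gamma_{\text{\normalfont{sy}},+}$ to the set $\pazocal{R}_t(E)$, I obtain the chain
\begin{equation*}
\gamma_{\Theta^{1/2},+}(E)\approx \gamma_{\text{\normalfont{sy}},+}(E)=\gamma_{\text{\normalfont{sy}},+}(\pazocal{R}_t(E))\approx \gamma_{\Theta^{1/2},+}(\pazocal{R}_t(E))=\gamma_{\overline{\Theta}^{1/2},+}(E),
\end{equation*}
which completes the proof. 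There is no real obstacle here; the only point deserving a brief verification is the $\pazocal{R}_t$-invariance of $\gamma_{\text{\normalfont{sy}},+}$, which reduces to the evenness of $P_{\text{\normalfont{sy}}}$ in $t$ and mimics the push-forward argument already carried out in Proposition \ref{prop3.2}.
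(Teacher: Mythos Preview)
Your proof is correct and follows essentially the same route as the paper: the discussion preceding the corollary already records the chain $\gamma_{\Theta^{1/2},+}(E)\approx \gamma_{\text{\normalfont{sy}},+}(E)=\gamma_{\text{\normalfont{sy}},+}(\pazocal{R}_t(E))\approx \gamma_{\Theta^{1/2},+}(\pazocal{R}_t(E))=\gamma_{\overline{\Theta}^{1/2},+}(E)$, invoking Theorem~\ref{thm3.11}, Corollary~\ref{cor3.9}, the $\pazocal{R}_t$-invariance of $\gamma_{\text{\normalfont{sy}},+}$ (same push-forward argument as Proposition~\ref{prop3.2}), and Proposition~\ref{prop3.2} itself. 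Your write-up just makes these steps explicit.
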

	
	\bigskip
	\section{The \mathinhead{\gamma_{\Theta^{1/2},+}}{} capacity of Cantor sets}
	\label{sec4}
	\bigskip
	
	Let us move on to finally estimate the ($1/2,+$)-caloric capacity of a certain family of Cantor sets, which generalize the particular example given in \cite[\textsection 5]{MPr}. In such example we are presented with a Cantor set $E$ inspired by the one constructed in \cite[Chapter IV, \textsection 2]{Ga} with positive $\pazocal{H}^n$-measure and which is removable for the $\Theta^{1/2}$-equation, meaning that $\gamma_{\Theta^{1/2}}(E)=0$. Our goal will be to generalize the latter and study its $\gamma_{\Theta^{1/2},+}$ capacity.\medskip\\
	Let $\lambda=(\lambda_j)_j$ be a sequence of real numbers satisfying $0< \lambda_j < 1/2$. We shall define its associated Cantor set $E\subset \mathbb{R}^{n+1}$ by the following algorithm. Set $Q^0:=[0,1]^{n+1}$ the unit cube of $\mathbb{R}^{n+1}$ and consider $2^{n+1}$ disjoint cubes inside $Q^0$ of side length $\ell_1:=\lambda_1$, with sides parallel to the coordinate axes and such that each cube contains a vertex of $Q^0$. Continue this same process now for each of the $2^{n+1}$ cubes from the previous step, but now using a contraction factor $\lambda_2$. That is, we end up with $2^{2(n+1)}$ cubes with side length $\ell_2:=\lambda_1\lambda_2$.\medskip\\
	It is clear that proceeding inductively we have that at the $k$-th step of the iteration we encounter $2^{k(n+1)}$ cubes, that we denote $Q_j^k$ for $1\leq j \leq 2^{k(n+1)}$, with side length $\ell_k:=\prod_{j=1}^k \lambda_j$. We will refer to them as cubes of the $k$-\textit{th generation}. We define
	\begin{equation*}
		E_k=E(\lambda_1,\ldots,\lambda_k):=\bigcup_{j=1}^{2^{k(n+1)}}Q_j^k,
	\end{equation*}
	and from the latter we obtain the Cantor set associated with $\lambda$,
	\begin{equation}
    \label{eq4.1}
		E=E(\lambda):=\bigcap_{k=1}^\infty E_k.
	\end{equation}
	If we chose $\lambda_j=2^{-(n+1)/n}$ for every $j$ we would recover the particular Cantor set presented in \cite[\textsection 5]{MPr}. We are first concerned with studying the Hausdorff dimension of $E$ in terms of $\lambda$, which we want it to be $n$, the critical dimension for $\gamma_{\Theta^{1/2}}$. An example of condition one imposes in such sequence to ensure the latter, as it is done in \cite{MTo} or \cite{To3}, is the following
	\begin{equation*}
		\lim_{k\to\infty} \ell_k\,2^{k(n+1)/n}=1.
	\end{equation*}
	
	Observe that a particular consequence of the previous equality is that there exists a constant $C>0$ depending on $\lambda$, so that
	\begin{equation*}
		\ell_k2^{k(n+1)/n}\geq C, \hspace{0.5cm} \forall k\geq 1.
	\end{equation*}
	Using such property one deduces $\pazocal{H}^n(E)>0$. Indeed, consider the probability measure $\mu$ defined on $E$ such that, for each generation $k$, $\mu(Q_j^k):=2^{-k(n+1)},\, 1 \leq j \leq 2^{k(n+1)}$. Let $Q$ be any cube, that we may assume to be contained in $Q^0$, and pick $k$ with the property $\ell_{k+1}\leq \ell(Q)\leq \ell_k$, so that $Q$ can meet, at most, $2^{n+1}$ cubes $Q_j^k$. Thus $\mu(Q)\leq 2^{-(k-1)(n+1)}$ and we deduce
	\begin{equation}
		\label{eq4.2}
		\mu(Q)\leq \frac{2^{2(n+1)}}{2^{(k+1)(n+1)}}=2^{2(n+1)}\Big( \ell_{k+1}2^{(k+1)(n+1)/n} \Big)^{-n}\ell_{k+1}^n\leq \frac{2^{2(n+1)}}{C^n}\ell(Q)^n\simeq \ell(Q)^n,
	\end{equation}
	meaning that $\mu$ presents $n$-growth. Therefore, by \cite[Chapter IV, Lemma 2.1]{Ga}, which follows from Frostman's lemma, we get $\pazocal{H}^n(E)\geq \pazocal{H}^n_\infty(E)>0$.\medskip\\
	Moreover, observe that for a fixed $0<\delta\ll 1$, there is $k$ large enough so that $\text{diam}(Q_j^k)\leq \delta$ and $ \ell_k\,2^{k(n+1)/n}\leq 2$. Thus, as $E_k$ defines a covering of $E$ admissible for $\pazocal{H}^n_\delta$, we get
	\begin{align*}
		\pazocal{H}^n_\delta(E)\leq \sum_{j=1}^{2^{k(n+1)}}\text{diam}(Q^k_j)^n\simeq \ell_k^n\,2^{k(n+1)}=\big( \ell_k\,2^{k(n+1)/n} \big)^n\leq 2^n.
	\end{align*}
	Since this procedure can be done for any $\delta$, we also have $\pazocal{H}^n(E)<\infty$ and thus $\text{dim}_\pazocal{H}(E)=n$ as wished.\medskip\\
	In order to state in a more compact way the results we are interested in, we introduce the following density for each $k\geq 1$,
	\begin{equation*}
		\theta_k:=\frac{1}{\ell_k^n\, 2^{k(n+1)}}= \frac{\mu(Q^k)}{\ell_k^n},
	\end{equation*}
	where $\mu$ is the probability measure defined above and $Q^k$ is any cube of the $k$-th generation. We also set $\theta_0:=1$.
	
	\begin{thm}
		\label{thm3.14}
		Let $(\lambda_j)_j$ be a sequence of real numbers satisfying $0<\lambda_j\leq \tau_0<1/2$ for every $j$, and $E$ its associated Cantor set as in \eqref{eq4.1}. Then, for each generation $k$,
		\begin{equation*}
			\gamma_{\Theta^{1/2},+}(E_k)\lesssim  \Bigg( \sum_{j=0}^{k}\theta_j  \Bigg)^{-1},
		\end{equation*}
		where the implicit constant in the relation $\lesssim$ only depends on $n$ and $\tau_0$.
	\end{thm}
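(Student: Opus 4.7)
The plan is to invoke Corollary~\ref{cor3.12} to reduce matters to estimating $\gamma_{\text{\normalfont{sy}},+}(E_k)$, and then, given any admissible measure, to pair it via Fubini against the natural Cantor probability measure $\nu$ on $E$---the one introduced just before the statement and characterized by $\nu(Q)=2^{-j(n+1)}$ for every cube $Q$ of generation $j$. The whole argument then reduces to a single pointwise estimate on $P_{\text{\normalfont{sy}}}\ast \nu$.

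The main technical step is the geometric lower bound
\begin{equation*}
P_{\text{\normalfont{sy}}}\ast \nu(\bar{x}) \,\gtrsim\, \sum_{j=0}^{k-1}\theta_j, \qquad \bar{x}\in E_k,
\end{equation*}
with implicit constant depending only on $n$ and $\tau_0$. I would prove this scale by scale: for each $0\le j\le k-1$, let $Q^j(\bar{x})$ denote the cube of generation $j$ containing $\bar{x}$, let $Q^{j+1}(\bar{x})$ be the sub-cube of $Q^j(\bar{x})$ of generation $j+1$ containing $\bar{x}$, and let $A_j(\bar{x})$ be the union of the $2^n$ sub-cubes of $Q^j(\bar{x})$ of generation $j+1$ lying on the opposite side of the $t$-midplane of $Q^j(\bar{x})$ from $Q^{j+1}(\bar{x})$. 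Because these sub-cubes sit at corners of their parent and $\lambda_{j+1}\le\tau_0<1/2$, every $\bar{y}\in A_j(\bar{x})$ satisfies $|t_x-t_y|\ge \ell_j(1-2\tau_0)$ and $|\bar{x}-\bar{y}|\le \sqrt{n+1}\,\ell_j$, giving $P_{\text{\normalfont{sy}}}(\bar{x}-\bar{y})\gtrsim \ell_j^{-n}$. Since $\nu(A_j(\bar{x}))=2^n\cdot 2^{-(j+1)(n+1)}=2^{-j(n+1)-1}$ and the sets $A_j(\bar{x})$ lie in the pairwise disjoint annuli $Q^j(\bar{x})\setminus Q^{j+1}(\bar{x})$, the displayed inequality follows by summing.

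Now I fix any $\mu$ admissible for $\gamma_{\text{\normalfont{sy}},+}(E_k)$. Because $P_{\text{\normalfont{sy}}}$ is a nonnegative kernel, $P_{\text{\normalfont{sy}}}\ast \mu$ is lower semicontinuous, so the essential bound $\|P_{\text{\normalfont{sy}}}\ast \mu\|_\infty\le 1$ is in fact pointwise. Since $P_{\text{\normalfont{sy}}}$ is also even, Fubini gives
\begin{equation*}
\Bigl(c\sum_{j=0}^{k-1}\theta_j\Bigr)\mu(E_k) \,\le\, \int P_{\text{\normalfont{sy}}}\ast \nu\,d\mu \,=\, \int P_{\text{\normalfont{sy}}}\ast \mu\,d\nu \,\le\, \nu(E)=1,
\end{equation*}
so that $\mu(E_k)\lesssim 1/\sum_{j=0}^{k-1}\theta_j$. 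To bring in the remaining term, I invoke the $n$-growth constant $1$ of $\mu\in \Sigma(E_k)$, which forces $\mu(E_k)\le \sum_Q \mu(Q) \lesssim 2^{k(n+1)}\ell_k^n = 1/\theta_k$ (summing over the $2^{k(n+1)}$ cubes of generation $k$). Combining these bounds via $\min(1/a,1/b)\le 2/(a+b)$ yields $\mu(E_k)\lesssim 1/\sum_{j=0}^{k}\theta_j$, and taking the supremum over $\mu$ completes the proof.

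The main obstacle is the pointwise lower bound in the second paragraph: one must verify that the ``opposite $t$-side'' sub-cubes are genuinely $t$-separated from $\bar{x}$ (this is exactly where the strict inequality $\tau_0<1/2$ is consumed, through the factor $1-2\tau_0$), and that the disjoint annular decomposition lets the contributions at different scales add up cleanly rather than interact.
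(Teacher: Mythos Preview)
Your proof is correct and follows the same overall strategy as the paper: reduce to $\gamma_{\text{\normalfont{sy}},+}$ via Corollary~\ref{cor3.12}, establish a pointwise lower bound on the symmetric potential of a reference probability measure by summing contributions from opposite-side sub-cubes at each scale, and close with Tonelli. The two implementations differ in two minor but noteworthy ways. First, the paper takes as reference measure the normalized Lebesgue measure $\mu_k=|E_k|^{-1}\pazocal{L}^{n+1}|_{E_k}$ rather than the singular Cantor measure $\nu$ on $E$; because $\mu_k$ has Lebesgue density on $\Delta_k$, the paper can extract the last term $\theta_k$ directly from the innermost integral $\int_{\Delta_k}P_{\text{\normalfont{sy}}}(\overline{y}-\overline{x})\,d\mu_k(\overline{y})$ via a short annular computation, whereas you recover $\theta_k$ separately from the $n$-growth constraint on admissible $\mu\in\Sigma(E_k)$ and then combine through $\min(1/a,1/b)\le 2/(a+b)$. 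Second, at each scale the paper uses only the single diagonally opposite sub-cube $\widetilde{\Delta}_{j+1}$, while you use all $2^n$ sub-cubes on the opposite $t$-half; both yield $\gtrsim(1-2\tau_0)\theta_j$, just with different constants. Either choice works; your variant trades the innermost Lebesgue estimate for an explicit appeal to the growth condition.
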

	\begin{proof}
		Fix a positive integer $k$ and $\mu$ the probability measure supported on $E_k$ defined as
		\begin{equation*}
			\mu_k:=\frac{1}{|E_{k}|}\pazocal{L}^{n+1}|_{E_k}:=\frac{1}{\pazocal{L}^{n+1}(E_k)}\pazocal{L}^{n+1}|_{E_k}.
		\end{equation*}
		Observe that for every cube $Q^j_i$ of the $j$-th generation, with $0\leq j \leq k$ and $1\leq i \leq 2^{j(n+1)}$,
		\begin{equation*}
			\mu_k(Q_i^j)= \frac{1}{2^{k(n+1)}\ell_k^{n+1}}2^{(k-j)(n+1)}\ell_k^{n+1}=2^{-j(n+1)}.
		\end{equation*} 
		Fix any $\overline{x}=(x,t)\in E_k$ and consider the corresponding chain of cubes associated with $\overline{x}$,
		\begin{equation*}
			\overline{x}\in \Delta_{k}\subset\Delta_{k-1}\subset \cdots \subset \Delta_{1}\subset \Delta_{0}=:Q^0,
		\end{equation*}
		where $\Delta_j$ is the unique cube of the family $E_j$ that contains $\overline{x}$. Observe that
		\begin{align*}
			\int_{E_k}P_{\text{\normalfont{sy}}}(\overline{y}-\overline{x})&\text{d}\mu_k(\overline{y})=\int_{\Delta_{0}}P_{\text{\normalfont{sy}}}(\overline{y}-\overline{x})\text{d}\mu_k(\overline{y})\\
			&= \sum_{j=0}^{k-1}\int_{\Delta_{j}\setminus{\Delta_{j+1}}}P_{\text{\normalfont{sy}}}(\overline{y}-\overline{x})\text{d}\mu_k(\overline{y})+\int_{\Delta_k}P_{\text{\normalfont{sy}}}(\overline{y}-\overline{x})\text{d}\mu_k(\overline{y}) =:\sum_{j=1}^{k-1}I_{j} + I_k.
		\end{align*}
		If for each $0\leq j \leq k-1$ we write $\widetilde{\Delta}_{j+1}$ the cube of $E_{j+1}$ contained in $\Delta_{j}$ diagonally opposite to $\Delta_{j+1}$,
		\begin{align*}
			I_{j}&:=\int_{\Delta_{j}\setminus{\Delta_{j+1}}}\frac{|s-t|}{2|\overline{y}-\overline{x}|^{n+1}}d\mu_k(\overline{y})\geq \int_{ \widetilde{\Delta}_{j+1}}\frac{|s-t|}{2|\overline{y}-\overline{x}|^{n+1}}\text{d}\mu_k(\overline{y})\\
			&\gtrsim \frac{\ell_{j}-2\ell_{j+1}}{\ell_{j}^{n+1}}\mu_k(\widetilde{\Delta}_{j+1})= \frac{1}{\ell_{j}^n}\big(1-2\lambda_{j+1}\big)2^{-(j+1)(n+1)}\gtrsim \theta_{j}(1-2\tau_0)\simeq \theta_{j}.
		\end{align*}
		Regarding $I_k$, consider the cube $Q_{\overline{x}}:=Q(\overline{x},2\,\text{diam}(\Delta_k))$, that clearly contains $\Delta_k$, and for each positive integer $j$ write
		\begin{equation*}
			F_j:=Q_{\overline{x}} \cap \big\{ (y,s)\,:\; |t-s|>2^{-j}\text{diam}(\Delta_k) \big\},
		\end{equation*}
		as well as $F_0:=\varnothing$. Set $\widehat{F}_j:=F_{j+1}\setminus{F_j}$, and notice that $\{\widehat{F}_{j}\}_{j\geq 0}$ is a disjoint open covering of $Q_{\overline{x}}$. Therefore, since $\{\widehat{F}_{j}\cap \Delta_k\}_{j\geq 0}$ is also a disjoint covering $\Delta_k$,
		\begin{align*}
			I_k &= \frac{1}{|E_k|} \int_{\Delta_k}\frac{|t-s|}{2|\overline{x}-\overline{y}|^{n+1}}\text{d}\pazocal{L}^{n+1}(\overline{y}) \gtrsim \frac{1}{|E_k|\ell_k^{n+1}}\sum_{j=0}^{\infty} \int_{\widehat{F}_{j}\cap \Delta_k}|t-s|\text{d}\pazocal{L}^{n+1}(\overline{y})\\
			&\gtrsim \frac{1}{|E_k|\ell_k^{n}}\sum_{j=0}^{\infty} \frac{|\widehat{F}_{j}\cap \Delta_k|}{2^{j+1}}\simeq \frac{\ell_k}{|E_k|}\sum_{j=0}^{\infty} \frac{1}{2^{2j+1}} \simeq \frac{\ell_k}{|E_k|}=\frac{1}{2^{k(n+1)}\ell_k^n}=\theta_k.
		\end{align*}
		
		Thus, there exists some positive constant $C=C(n,\tau_0)$ such that
		\begin{equation*}
			\int_{E_k}P_{\text{\normalfont{sy}}}(\overline{y}-\overline{x})\text{d}\mu_k(\overline{y})\geq C^{-1} \sum_{j=0}^{k}\theta_{j}, \hspace{0.5cm} \forall \overline{x}\in E_{k}.
		\end{equation*}
		The last inequality can be rewritten as
		\begin{equation}
			\label{eq4.3}
			1\leq C\Bigg( \sum_{j=0}^{k}\theta_{j} \Bigg)^{-1}\int_{E_k}P_{\text{\normalfont{sy}}}(\overline{y}-\overline{x})\text{d}\mu_k(\overline{y}), \hspace{0.5cm} \forall \overline{x}\in E_{k}.
		\end{equation}
		Take $\nu$ any admissible measure for $\gamma_{\text{\normalfont{sy}},+}(E_k)$. Integrating on both sides of equation \eqref{eq4.3} and applying Tonelli's theorem together with $\|P_{\text{\normalfont{sy}}}\ast \nu\|_{\infty}\leq 1$,
		\begin{align*}
			\nu(E_{k})\leq C \Bigg(\sum_{j=0}^{k}\theta_j\Bigg)^{-1}\int_{E_k}P_{\text{\normalfont{sy}}}\ast\nu(\overline{y})\text{d}\mu_k(\overline{y})\leq C \Bigg(\sum_{j=0}^{k}\theta_j\Bigg)^{-1}.
		\end{align*}
		Hence, since $\nu$ was arbitrary, by Corollary \ref{cor3.12}, we finally conclude
		\begin{equation*}
			\gamma_{\Theta^{1/2},+}(E_k)\lesssim \gamma_{\text{\normalfont{sy}},+}(E_k) \leq  C  \Bigg( \sum_{j=0}^{k}\theta_j  \Bigg)^{-1}.
		\end{equation*}
	\end{proof}
	
	\begin{thm}
		\label{thm4.2}
		Let $(\lambda_j)_j$ be a sequence of real numbers satisfying $0<\lambda_j\leq \tau_0<1/2$, for every $j$. Then, for any fixed generation $k$,
		\begin{equation*}
			\gamma_{\Theta^{1/2},+}(E_k)\gtrsim  \Bigg( \sum_{j=0}^{k}\theta_j \Bigg)^{-1},
		\end{equation*}
		where the implicit constant of $\lesssim$ only depends on $n$ and $\tau_0$.
	\end{thm}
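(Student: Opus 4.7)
The plan is to exhibit a positive admissible measure for $\gamma_{\Theta^{1/2},+}(E_k)$ whose mass yields the claimed lower bound. The natural candidate is the uniform probability measure
\[
\mu_k := \frac{1}{|E_k|}\,\pazocal{L}^{n+1}|_{E_k},
\]
already employed in the proof of Theorem \ref{thm3.14}, now rescaled by the reciprocal of $\|P\ast\mu_k\|_\infty$ so as to meet the admissibility constraint of Definition \ref{def2.1}.

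The core task is to establish the pointwise estimate $\|P\ast\mu_k\|_\infty \leq C(n,\tau_0)\sum_{j=0}^k\theta_j$. I would start from the elementary majorization $P(\overline{x}-\overline{y})\leq |\overline{x}-\overline{y}|^{-n}$, immediate from $|t-s|\leq|\overline{x}-\overline{y}|$ together with the indicator in the definition of $P$; the problem then reduces to bounding, uniformly in $\overline{x}\in\mathbb{R}^{n+1}$, the Riesz-type potential $\int_{E_k}|\overline{x}-\overline{y}|^{-n}\,\text{d}\mu_k(\overline{y})$.

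To control this potential uniformly, I would collect three growth bounds for $\mu_k$: trivially $\mu_k(B(\overline{x},r))\leq 1$; next, $\mu_k(B(\overline{x},r))\lesssim 2^{-j(n+1)}$ whenever $r\leq\ell_j$, since the assumption $\lambda_j\leq\tau_0<1/2$ forces any ball of radius at most $\ell_j$ to intersect only an absolute number (depending on $n,\tau_0$) of cubes of the $j$-th generation; and finally, $\mu_k(B(\overline{x},r))\leq \pazocal{L}^{n+1}(B(\overline{x},r))/|E_k|\lesssim r^{n+1}\theta_k/\ell_k$ for $r\leq\ell_k$, by absolute continuity of $\mu_k$. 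Decomposing $\mathbb{R}^{n+1}$ as the outer region $\{|\overline{x}-\overline{y}|\geq\ell_0\}$, the dyadic shells $B(\overline{x},\ell_j)\setminus B(\overline{x},\ell_{j+1})$ for $0\leq j\leq k-1$, and the inner ball $B(\overline{x},\ell_k)$, and bounding $|\overline{x}-\overline{y}|^{-n}$ by its maximum on each piece, the $j$-th annulus contributes at most $2^{-j(n+1)}/\ell_{j+1}^n \lesssim \theta_{j+1}$, the outer region contributes $\lesssim \theta_0=1$, and the innermost ball, evaluated by spherical coordinates against the absolutely continuous $\mu_k$, contributes $\lesssim \theta_k$; summing gives the claimed bound.

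Once that estimate is in hand, setting $\nu := \mu_k/\|P\ast\mu_k\|_\infty$ produces a positive Borel regular measure supported on $E_k$ with $\|P\ast\nu\|_\infty = 1$, hence admissible for $\gamma_{\Theta^{1/2},+}(E_k)$, and
\[
\gamma_{\Theta^{1/2},+}(E_k) \geq \nu(E_k) = \|P\ast\mu_k\|_\infty^{-1} \gtrsim \Bigg(\sum_{j=0}^k\theta_j\Bigg)^{-1},
\]
as desired. The main obstacle is that the pointwise estimate must be verified uniformly over all $\overline{x}\in\mathbb{R}^{n+1}$, not only over $\overline{x}\in E_k$ (the regime handled by the chain-of-cubes argument in the proof of Theorem \ref{thm3.14}); crucially, because both the majorization $P\leq|\cdot|^{-n}$ and the growth bounds for $\mu_k$ are uniform in the base point, no self-similarity argument or nearest-point comparison is required.
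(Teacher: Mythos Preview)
Your proposal is correct; both your argument and the paper's use the uniform probability $\mu_k = |E_k|^{-1}\pazocal{L}^{n+1}|_{E_k}$ and establish $\|P\ast\mu_k\|_\infty \lesssim \sum_{j=0}^k\theta_j$ via the majorization $P\leq|\cdot|^{-n}$, but the decompositions differ. The paper runs an iterative nearest-cube argument: at each stage it selects a closest generation-$(m+1)$ cube $\Delta_{m+1}$ to $\overline{x}$, bounds the integral over the remainder using the separation estimate $d(\overline{x},E_{m+1}\setminus\Delta_{m+1})\gtrsim(1-2\tau_0)\ell_m$ (this is where $\tau_0$ enters explicitly, producing factors $(1-2\tau_0)^{-n}$), and recurses into $\Delta_{m+1}$. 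You instead establish the multi-scale growth bound $\mu_k(B(\overline{x},\ell_j))\lesssim 2^{-j(n+1)}$, which follows from disjointness of the generation-$j$ cubes by a volume count and carries a constant depending only on $n$, and then decompose into the concentric shells $B(\overline{x},\ell_j)\setminus B(\overline{x},\ell_{j+1})$. Your route is the standard potential-theoretic one and is marginally cleaner (the implicit constant in fact depends only on $n$, not on $\tau_0$), while the paper's is tied more directly to the self-similar geometry of the Cantor construction. Both approaches handle the innermost scale $B(\overline{x},\ell_k)$ identically, via polar coordinates against the absolutely continuous density $|E_k|^{-1}$.
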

	\begin{proof}
		Fix a generation $k$ as well as the measure introduced in the proof of Theorem \ref{thm3.14},
		\begin{equation*}
			\mu_k:=\frac{1}{|E_k|}\pazocal{L}^{n+1}|_{E_k}.
		\end{equation*}
		Recall that $\mu_k(Q^j_i)=2^{-j(n+1)}$ for any cube of the $j$-th generation, with $0\leq j \leq k$ and $1\leq i \leq 2^{j(n+1)}$. Let us fix any $\overline{x}\in \mathbb{R}^{n+1}$ and proceed in an inductive way as follows: 
		\begin{itemize}
			\item[\textit{1}.] If $d(\overline{x}, Q^0)\geq 1$, it is clear that $P\ast \mu_k(\overline{x})\leq 1=\theta_0$. If $d(\overline{x}, Q^0)< 1$, denote by $\Delta_1$ one of the cubes of the first generation $E_1$ that is closest to $\overline{x}$. Observe that $d(\overline{x},E_1\setminus{\Delta_1})\gtrsim 1-2\lambda_1\geq 1-2\tau_0$. Then,
			\begin{align*}
				P\ast \mu_k(\overline{x})&= \int_{E_k}P(\overline{x}-\overline{y})\text{d}\mu_k(\overline{y}) \leq \int_{E_1\setminus{\Delta_1}}\frac{\text{d}\mu_k(\overline{y})}{|\overline{x}-\overline{y}|^n}+\int_{\Delta_1}\frac{\text{d}\mu_k(\overline{y})}{|\overline{x}-\overline{y}|^n}\\
				&\lesssim \frac{\theta_0}{(1-2\tau_0)^n} + \int_{\Delta_1}\frac{\text{d}\mu_k(\overline{y})}{|\overline{x}-\overline{y}|^n}.
			\end{align*}
			\item[\textit{2}.] If $d(\overline{x},\Delta_1)\geq \ell_1$, it is clear that the above remaining integral satisfies
			\begin{equation*}
				\int_{\Delta_1}\frac{\text{d}\mu_k(\overline{y})}{|\overline{x}-\overline{y}|^n} \leq \frac{\mu_k(\Delta_1)}{\ell_1^n}=\theta_1,
			\end{equation*}
			and therefore $P\ast \mu_k(\overline{x})\lesssim \theta_0+\theta_1$. On the other hand, if $d(\overline{x},\Delta_1)< \ell_1$, we repeat the process of step \textit{1} and pick $\Delta_2$ one of the cubes of $E_2$ that is closest to $\overline{x}$. In the current setting notice that $d(\overline{x},E_2\setminus{\Delta_2})\gtrsim (1-2\tau_0)\ell_1$, which implies
			\begin{align*}
				P\ast \mu_k(\overline{x}) &\lesssim \frac{\theta_0}{(1-2\tau_0)^n}+\int_{E_2\setminus{\Delta_2}}\frac{\text{d}\mu_k(\overline{y})}{|\overline{x}-\overline{y}|^n}+\int_{\Delta_2}\frac{\text{d}\mu_k(\overline{y})}{|\overline{x}-\overline{y}|^n}\\
				&\lesssim \frac{1}{(1-2\tau_0)^n}(\theta_0+\theta_1)+\int_{\Delta_2}\frac{\text{d}\mu_k(\overline{y})}{|\overline{x}-\overline{y}|^n}.
			\end{align*}
		\end{itemize}
		In general, for $1\leq m <k-1$, the $(m+1)$-th step of the above process we would begin by dealing with an estimate of the form
		\begin{equation*}
			P\ast \mu_k(\overline{x}) \lesssim \frac{1}{(1-2\tau_0)^n}(\theta_0+\theta_1+\cdots+\theta_{m-1})+\int_{\Delta_m}\frac{\text{d}\mu_k(\overline{y})}{|\overline{x}-\overline{y}|^n},
		\end{equation*}
		and we would distinguish whether if $d(\overline{x},\Delta_j)\geq \ell_j$ or not. If the inequality is satisfied, it follows that $P\ast \mu_k(\overline{x})\lesssim \sum_{j=0}^{m}\theta_j$. If on the other hand $d(\overline{x},\Delta_j)< \ell_j$, write $\Delta_{m+1}$ one of the cubes of $E_{m+1}$ that is closest to $\overline{x}$ and notice that $d(\overline{x},E_{m+1}\setminus{\Delta_{m+1}})\gtrsim (1-2\tau_0)\ell_m$, and move on to step $m+2$.
		The previous process can carry on a maximum of $k$ steps (this is the case, for example, if $\overline{x}\in E_k$), and in this situation $\overline{x}$ satisfies $d(\overline{x},\Delta_k)<\ell_k$ as well as the estimate 
		\begin{equation*}
			P\ast \mu_k(\overline{x})\lesssim \frac{1}{(1-2\tau_0)^n}\sum_{j=0}^{k-1}\theta_j +\int_{\Delta_k}\frac{\text{d}\mu_k(\overline{y})}{|\overline{x}-\overline{y}|^n},
		\end{equation*}
		that cannot	be dealt with the same iterative method. We name the remaining integral $I_k$ and write $Q_k:=Q\big(\overline{x},5\ell_k\big)$ so that $\Delta_k \subset Q_k$. We split the previous cube into the annuli $A_j:=Q\big( \overline{x},5\ell_k 2^{-j}\big)\setminus{Q\big(\overline{x},5\ell_k 2^{-j-1}\big)}$, for $j \geq 0$ integer. Therefore,
		\begin{align*}
			I_k&\leq \frac{1}{|E_k|}\sum_{j=0}^\infty\int_{A_j} \frac{\text{d}\pazocal{L}^{n+1}(\overline{y})}{|\overline{x}-\overline{y}|^n}
			\leq \frac{1}{|E_k|}\sum_{j=0}^\infty \frac{(5\,\ell_k2^{-j})^{n+1}}{(5\ell_k2^{-j-1})^{n}} \simeq \frac{\ell_k}{|E_k|}\sum_{j=0}^{\infty}\frac{1}{2^j} \simeq \theta_k.
		\end{align*}
		With this we conclude that, in general, there exists a constant $C=C(n,\tau_0)>0$ so that
		\begin{equation*}
			P\ast \mu_k(\overline{x})\Bigg(C\sum_{j=0}^{k}\theta_j\Bigg)^{-1}\leq 1, \hspace{0.5cm} \forall \overline{x}\in \mathbb{R}^{n+1}.
		\end{equation*}
		Hence, the measure $\big(C \sum_{j=0}^k \theta_k \big)^{-1}\mu_k$ is admissible for $\gamma_{\Theta^{1/2},+}(E_k)$, and the result follows.
	\end{proof}
	
	Combining both Theorems \ref{thm3.14} and \ref{thm4.2}, we obtain the following result:
	
	\begin{cor}
		\label{cor4.3}
		Let $(\lambda_j)_j$ be a sequence of real numbers satisfying $0<\lambda_j\leq \tau_0<1/2$, for every $j$. Then, if $E$ denotes the associated Cantor set as in \eqref{eq4.1},
        \begin{equation*}
			\gamma_{\Theta^{1/2},+}(E) \approx \Bigg( \sum_{j=0}^{\infty}\theta_j  \Bigg)^{-1}.
		\end{equation*}
		where the implicit constants only depend on $n$ and $\tau_0$.
	\end{cor}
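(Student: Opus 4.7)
The plan is essentially to pass from the finite-generation estimates of Theorems \ref{thm3.14} and \ref{thm4.2} to the limit via the outer regularity property established in Proposition \ref{prop3.1}, item \textit{3}. The sequence $(E_k)_k$ is, by construction, a nested decreasing sequence of compact subsets of $\mathbb{R}^{n+1}$ with $\bigcap_{k=1}^\infty E_k = E$, so outer regularity yields
\begin{equation*}
\gamma_{\Theta^{1/2},+}(E) = \lim_{k\to\infty}\gamma_{\Theta^{1/2},+}(E_k).
\end{equation*}

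Next, I would invoke Theorems \ref{thm3.14} and \ref{thm4.2}: since the implicit constants in both statements depend only on $n$ and $\tau_0$ (in particular, they are uniform in the generation $k$), there exists $C=C(n,\tau_0)>0$ such that
\begin{equation*}
C^{-1}\Bigg(\sum_{j=0}^{k}\theta_j\Bigg)^{-1} \leq \gamma_{\Theta^{1/2},+}(E_k) \leq C\Bigg(\sum_{j=0}^{k}\theta_j\Bigg)^{-1}, \hspace{0.5cm} \forall k\geq 1.
\end{equation*}

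Finally, since each $\theta_j>0$, the partial sums $S_k := \sum_{j=0}^k \theta_j$ form a monotone increasing sequence in $[1,+\infty]$, which converges (in the extended sense) to $S_\infty := \sum_{j=0}^\infty \theta_j$. Consequently $S_k^{-1}\to S_\infty^{-1}$ as $k\to\infty$, with the convention $\infty^{-1}=0$. Taking limits in the double inequality above and combining with the outer regularity identity yields
\begin{equation*}
C^{-1}\Bigg(\sum_{j=0}^{\infty}\theta_j\Bigg)^{-1} \leq \gamma_{\Theta^{1/2},+}(E) \leq C\Bigg(\sum_{j=0}^{\infty}\theta_j\Bigg)^{-1},
\end{equation*}
which is the desired comparability. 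There is essentially no technical obstacle here: all the real work has been done in Theorems \ref{thm3.14} and \ref{thm4.2}, and the only ingredients to verify are the uniformity of the constants (read off directly from the proofs) and the applicability of outer regularity to the nested compact family $(E_k)_k$ (which is immediate from Proposition \ref{prop3.1}, item \textit{3}). The only point requiring a moment of care is the degenerate case $S_\infty=\infty$, in which the corollary asserts $\gamma_{\Theta^{1/2},+}(E)=0$; this follows from the upper bound alone via sandwiching.
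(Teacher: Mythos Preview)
Your proposal is correct and follows essentially the same approach as the paper: combine the uniform two-sided bounds of Theorems \ref{thm3.14} and \ref{thm4.2} with outer regularity (Proposition \ref{prop3.1}, item \textit{3}) applied to the nested compact family $(E_k)_k$. The only cosmetic difference is that the paper invokes monotonicity of $\gamma_{\Theta^{1/2},+}$ (i.e. $E\subset E_k$) for the upper bound rather than outer regularity, but this is immaterial.
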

	\begin{proof}
		The estimate follows from Theorem \ref{thm3.14} and the monotonicity of $\gamma_{\Theta^{1/2},+}$; and from Theorem \ref{thm4.2} combined with the third point of Proposition \ref{prop3.1} (outer regularity).
	\end{proof}
	
	\subsection{An additional estimate for \mathinhead{\gamma_{\Theta^{1/2}}}{}}
	\label{subsec4.1}
	\bigskip
	
	Let us present one last result of similar nature to \cite[Theorem 5.3]{MPr} that admits an analogous proof and concerns not only $\gamma_{\Theta^{1/2},+}$, but also $\gamma_{\Theta^{1/2}}$. To obtain it, however, we need to assume that the sequence $(\theta_k)_k$ is decreasing (which is equivalent to assuming that $2^{-\frac{n+1}{n}} \leq \lambda_k$ for every $k$) and $\theta_k\geq \kappa$, for some absolute constant $\kappa>0$ and every $k$. Notice that this last condition implies $\pazocal{H}^n(E)<\infty$. Indeed, for any fixed $0<\delta\ll 1$, we may pick a generation $k$ large enough so that
	\begin{equation*}
		\pazocal{H}_{\delta}^n(E)\leq \pazocal{H}_{\delta}^n(E_k) \lesssim 2^{k(n+1)}\ell_k^n = \theta_k^{-1} \leq \kappa^{-1},
	\end{equation*}
	meaning that $\pazocal{H}^n(E)<\infty$.
	\begin{thm}
		\label{thm4.4}
		Let $(\lambda_j)_j$ satisfy $0< 2^{-\frac{n+1}{n}} \leq \lambda_j <1/2$, for every $j$, and let $E$ be its associated Cantor set as in \eqref{eq4.1}. If $\theta_k\geq \kappa$, for some $\kappa>0$ and every $k$,
		\begin{equation*}
			\gamma_{\Theta^{1/2}}(E)=0.
		\end{equation*}
	\end{thm}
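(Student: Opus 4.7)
The plan is to follow the strategy of \cite[Theorem 5.3]{MPr}, which proves the analogous statement for the specific Cantor set obtained by the constant choice $\lambda_j = 2^{-(n+1)/n}$ (equivalently $\theta_j \equiv 1$), and extend it to the variable case under the uniform lower bound $\theta_k \geq \kappa$. The outer regularity property noted in the remark after Proposition \ref{prop3.1} extends to $\gamma_{\Theta^{1/2}}$, so it suffices to show $\gamma_{\Theta^{1/2}}(E_k) \to 0$ as $k \to \infty$. Fix $k$ and let $T$ be admissible for $\gamma_{\Theta^{1/2}}(E_k)$; by Theorem \ref{thm2.2}, $T$ has $n$-growth with an absolute constant.

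The first main tool is the pointwise lower bound recorded in the proof of Theorem \ref{thm3.14}: for the uniform probability measure $\mu_k := |E_k|^{-1}\pazocal{L}^{n+1}|_{E_k}$,
\begin{equation*}
P_{\text{\normalfont{sy}}}\ast \mu_k(\overline{x}) \;\gtrsim\; \sum_{j=0}^k \theta_j \quad\text{for every } \overline{x} \in E_k.
\end{equation*}
Under the hypotheses $\lambda_j \geq 2^{-(n+1)/n}$ and $\theta_j \geq \kappa$, $(\theta_k)_k$ is non-increasing and $\mu_k$ has $n$-growth with a constant $\lesssim 1/\kappa$, uniformly in $k$. Since $\theta_j \geq \kappa$, the right-hand side of the display diverges as $k \to \infty$.

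For positive admissible measures this already yields the estimate $\gamma_{\Theta^{1/2},+}(E_k) \lesssim (\sum_{j=0}^k\theta_j)^{-1}$ by Tonelli (as in Theorem \ref{thm3.14}). For signed distributions the argument must be adapted. The strategy borrowed from \cite[Theorem 5.3]{MPr} is to invoke a $T(1)$-theorem (namely \cite[Theorem 1.3]{To2}): the $n$-growth of $\mu_k$, the $\mu_k$-weak boundedness of $\pazocal{P}_{\mu_k}$, and the $\text{BMO}_\rho(\mu_k)$-estimates for $\pazocal{P}_{\mu_k}1$ and $\pazocal{P}_{\mu_k}^{\ast}1$ (obtained as in Lemma \ref{lem3.10}) produce the $L^2(\mu_k)$-boundedness of $\pazocal{P}_{\mu_k}$, uniformly in $k$. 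Combining this $L^2$-bound with a dual characterization of $\gamma_{\Theta^{1/2}}$ in terms of densities against $\mu_k$ allows one to convert the pointwise lower bound on $P_{\text{\normalfont{sy}}}\ast \mu_k$ into $\gamma_{\Theta^{1/2}}(E_k)\lesssim (\sum_{j=0}^k\theta_j)^{-1}$, and since this tends to $0$, the result follows.

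The main obstacle is the last step: extending the positive-measure estimate to the signed case. For $\gamma_{\Theta^{1/2},+}$, Corollary \ref{cor4.3} already provides the vanishing. Controlling the cancellation available in signed distributions requires the $T(1)$ machinery applied uniformly in $k$, and the uniformity of the $L^2(\mu_k)$-norms is where the hypothesis $\theta_k \geq \kappa$ and the decreasing property of $(\theta_k)_k$ enter essentially: the former bounds the $n$-growth constant of $\mu_k$ independent of $k$, while the latter ensures that the constants appearing in the BMO estimates (analogous to those in the proof of Lemma \ref{lem3.10}) remain absolute throughout the iteration.
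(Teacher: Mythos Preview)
Your outline has a genuine gap at the step you call a ``dual characterization of $\gamma_{\Theta^{1/2}}$ in terms of densities against $\mu_k$''. The $T(1)$-theorem and the $L^2(\mu_k)$-boundedness of $\pazocal{P}_{\mu_k}$ feed only into the \emph{positive} capacities $\gamma_{2,+}\approx\widetilde{\gamma}_+\approx\gamma_{\Theta^{1/2},+}$ (Corollary~\ref{cor3.9}, Theorem~\ref{thm3.11}); no comparability $\gamma_{\Theta^{1/2}}\approx\gamma_{\Theta^{1/2},+}$ is available here, and establishing one would be a deep result in its own right. Nor can you write an admissible distribution for $\gamma_{\Theta^{1/2}}(E_k)$ as a bounded density against $\mu_k$: the representation lemma \cite[Theorem~5.1]{MPr} requires $\pazocal{H}^n$ of the support to be finite, which fails for $E_k$ since it is a union of full $(n{+}1)$-dimensional cubes. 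So nothing in your argument converts the pointwise lower bound on $P_{\text{\normalfont{sy}}}\ast\mu_k$ into an upper bound on $|\langle T,1\rangle|$ for a signed $T$. (Incidentally, \cite[Theorem~5.3]{MPr} does not proceed via a $T(1)$-theorem either.)

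The paper's proof works directly on $E$, where $\pazocal{H}^n(E)<\infty$ does hold, and argues by contradiction. Assuming $\gamma_{\Theta^{1/2}}(E)>0$, \cite[Theorem~5.1]{MPr} yields a signed measure $\nu=f\mu$ (with $\mu$ the natural probability on $E$ and $f\in L^\infty(\mu)$) satisfying $\|P\ast\nu\|_\infty\le 1$ and $|\langle\nu,1\rangle|>1$. Admissibility forces the truncated maximal function $\widetilde{\pazocal{P}}_{\nu,\ast}(\overline{x}):=\sup_{k}|\pazocal{P}_\nu\chi_{\mathbb{R}^{n+1}\setminus\Delta_k}(\overline{x})|$ to be bounded by an absolute constant $K'$ on $E$ (\cite[Eq.~(5.7)]{MPr}). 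One then picks a Lebesgue point $\overline{x}_0$ of $f$ with $f(\overline{x}_0)>0$ and a suitable corner $\overline{z}$ of the cube $\Delta_k\ni\overline{x}_0$, and following the computations of \cite[Theorem~5.3]{MPr} obtains
\[
\widetilde{\pazocal{P}}_{\nu,\ast}(\overline{z})\;\gtrsim\;\Bigg(\sum_{h=k}^{k+m-1}\theta_h\Bigg)f(\overline{x}_0)-\varepsilon\Bigg(\prod_{j=k+1}^{k+m}\lambda_j^{-1}\Bigg)^{\! n}\theta_k\;\ge\;\kappa(m-1)f(\overline{x}_0)-2^{m(n+1)}\varepsilon,
\]
the last inequality using $\theta_h\ge\kappa$ and $\lambda_j\ge 2^{-(n+1)/n}$. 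Taking $m$ large and then $\varepsilon$ small (which fixes $k$) contradicts the bound $K'$. The hypotheses thus enter through this pointwise Lebesgue-point argument on $E$, not through uniform $L^2$ or BMO estimates on the approximants $E_k$.
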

	\begin{proof}
		Since the sequence $(\theta_k)_k$ is non-increasing and $\theta_0=1$, we have $\kappa \leq \theta_k \leq 1$, for every $k$. Also notice that given $\mu$ the probability measure on $E$ with $\mu(Q^k_j)=2^{-k(n+1)}$ for every generation $k$ and $1\leq j \leq 2^{k(n+1)}$; by an analogous argument to that of \eqref{eq4.2}, we get that $\mu$ presents $n$-growth.
		Moreover, since we can assume without loss of generality that $0<\pazocal{H}^n(E)<\infty$, we deduce that $\mu$ coincides with $\pazocal{H}^n|_E$ modulo a constant factor.\medskip\\
		Let us proceed with the proof, which will be analogous to that of \cite[Theorem 5.3]{MPr}. Indeed, if we assume that $\gamma_{\Theta^{1/2}}(E)>0$, we are able to reach a contradiction as follows: by \cite[Theorem 5.1]{MPr} we may pick $\nu$ a signed measure supported on $E$ with $|\langle \nu , 1 \rangle |>1$, $\|P\ast \nu\|_{\infty}\leq 1$ and satisfying that there exists a Borel function $f\in L^\infty(\mu)$ such that
		\begin{equation*}
			\nu = f\mu.
		\end{equation*}
		The contradiction we are looking for arises from the bound \cite[Equation (5.7)]{MPr}, i.e.
		\begin{equation*}
			\widetilde{\pazocal{P}}_{\nu,\ast} (\overline{x}):=\sup_{k\geq 0} \big\rvert \pazocal{P}_\nu \chi_{\mathbb{R}^{n+1}\setminus{\Delta_k}}(\overline{x}) \big\rvert \leq K', \hspace{0.5cm} \forall \overline{x}\in E,
		\end{equation*}
		where recall that for any $\overline{x}\in E$, $\Delta_k$ is the unique cube of the family $E_k$ that contains $\overline{x}$. Proceeding exactly as in the proof of \cite[Theorem 5.3]{MPr}, we are able to reach the estimate
		\begin{equation*}
			\widetilde{\pazocal{P}}_{\nu,\ast}(\overline{z}) \gtrsim \Bigg( \sum_{h=k}^{k+m-1} \theta_h \Bigg) f(\overline{x}_0) - \varepsilon\,\Bigg(\prod_{j=k+1}^{k+m}\frac{1}{\lambda_j}\Bigg)^n\theta_k \geq \kappa (m-1) f(\overline{x}_0) - 2^{m(n+1)} \varepsilon,
		\end{equation*}
		where $\overline{z}=(z_1,\ldots,z_n,t)$ is one of the upper leftmost corners of $\Delta_k$ (with $z_1$ minimal and $t$ maximal in $\Delta_k$), $\overline{x}_0\in E$ is a Lebesgue point (with respect to $\mu$) for $f=\text{d}\nu/\text{d}\mu$ satisfying $f(\overline{x}_0)>0\,$; $m\gg 1$ is an integer parameter independent of $k$, and $0< \varepsilon\ll 1$ is a parameter that once fixed, also fixes the value of $k$. Therefore, by choosing $m$ large enough and then $\varepsilon$ small enough, we reach the desired contradiction.
	\end{proof} 
	
	\bigskip
	\section{The BMO variant of \mathinhead{\gamma_{\Theta^{1/2}}}{}}
	\label{sec5}
	\bigskip
	
	We devote the final sections of our paper to characterize the BMO and $\text{Lip}_\alpha$ variants of the $1/2$-caloric capacity. In the present section we study the former case, similarly as it is done in \cite[\textsection 13.5.1]{AIMar} for analytic capacity. That is, the main goal of this section is to give an analogous description of such object in terms of a particular Hausdorff content. To introduce it, let us recall the definition of the usual BMO space of $\mathbb{R}^{n+1}$ (rather than its generalization of Definition \ref{def3.2}).
	\begin{defn}[BMO]
		\label{def5.1}
		A function $f\in L^1_\text{loc}(\mathbb{R}^{n+1})$ belongs to the \textit{BMO} space if its BMO norm is finite, that is
		\begin{equation*}
			\|f\|_{\ast}:=\sup_{Q}\frac{1}{|Q|}\int_Q\big\rvert f(\overline{x}) -f_Q\big\rvert \text{d}\pazocal{L}^{n+1}(\overline{x}),
		\end{equation*}
		where $|Q|:=\pazocal{L}^{n+1}(Q)$ and $f_Q:=\frac{1}{|Q|}\int_Q f\, \text{d}\pazocal{L}^{n+1}$.
	\end{defn}
	
	We now introduce the BMO variant of the $1/2$-caloric capacity:
	
	\begin{defn}[BMO $1/2$-caloric capacity]
		\label{def5.2}
		Given a compact subset $E\subset \mathbb{R}^{n+1}$, define its \textit{BMO $1/2$-caloric capacity} as
		\begin{equation*}
			\gamma_{\Theta^{1/2},\ast}(E)=\sup |\langle T, 1 \rangle| ,
		\end{equation*}
		where the supremum is taken among all distributions $T$ with $\text{supp}(T)\subseteq E$ satisfying
		\begin{equation*}
			\|P\ast T\|_{\ast} \leq 1
		\end{equation*}
		Such distributions will be called \textit{admissible for $\gamma_{\Theta^{1/2},\ast}(E)$}.
	\end{defn}
	
	Since $\gamma_{\Theta^{1/2}}\leq \gamma_{\Theta^{1/2},\ast}$, Theorem \ref{thm2.1} implies that if $\gamma_{\Theta^{1/2}, \ast}(E)=0$, then $E$ is $1/2$-caloric removable.
	
	\bigskip
	\subsection{Comparability of \mathinhead{\gamma_{\Theta^{1/2},\ast}}{} to the Hausdorff measure}
	\label{subsec5.1}
	\bigskip
	
	We begin by noticing that distributions admissible for the BMO $1/2$-caloric capacity exhibit the same growth condition to that described in Theorem \ref{thm2.2}.
	
	\begin{thm}
		\label{thm5.1}
		Let $T$ be a distribution in $\mathbb{R}^{n+1}$ with $\|P\ast T\|_\ast\leq 1$. If $\varphi$ is a $\pazocal{C}^1$ function supported on $Q\subset \mathbb{R}^{n+1}$ with $\|\nabla \varphi\|_\infty \leq \ell(Q)^{-1}$, then 
		\begin{equation*}
			|\langle T, \varphi \rangle |\lesssim \ell(Q)^n.
		\end{equation*}
	\end{thm}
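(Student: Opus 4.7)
The strategy is to mimic the proof of Theorem~\ref{thm2.2} while replacing the $L^{\infty}$ norm by the BMO norm, which requires subtracting a suitable constant before pairing. Since $P$ is the fundamental solution of $\Theta^{1/2}$, one has $T=\Theta^{1/2}(P\ast T)$ in the distributional sense, and consequently
\begin{equation*}
    \langle T,\varphi\rangle \,=\, \langle P\ast T,\overline{\Theta}^{1/2}\varphi\rangle \,=\, \int_{\mathbb{R}^{n+1}}(P\ast T)(\overline{x})\,\overline{\Theta}^{1/2}\varphi(\overline{x})\,\text{d}\pazocal{L}^{n+1}(\overline{x}).
\end{equation*}
A direct computation (either from $\int \partial_t\varphi=0$ together with a symmetry argument for $(-\Delta)^{1/2}\varphi$, or via the Fourier representation) shows that $\int\overline{\Theta}^{1/2}\varphi\,\text{d}\pazocal{L}^{n+1}=0$. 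Hence one may freely subtract any constant from the first factor; I would take $c:=(P\ast T)_{2Q}$, the $\pazocal{L}^{n+1}$-average of $P\ast T$ on $2Q$, reducing the problem to estimating $\int_{\mathbb{R}^{n+1}}(P\ast T-c)\,\overline{\Theta}^{1/2}\varphi\,\text{d}\pazocal{L}^{n+1}$.

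I would then split this integral dyadically: into a local part over $2Q$ and a sum of annular parts over $A_k:=2^{k+1}Q\setminus 2^k Q$, $k\geq 1$. The local part would be bounded by the same pointwise estimate $\|\overline{\Theta}^{1/2}\varphi\|_{L^\infty(2Q)}\lesssim \ell(Q)^{-1}$ that underlies Theorem~\ref{thm2.2}, combined with the BMO inequality $\int_{2Q}|P\ast T-c|\,\text{d}\pazocal{L}^{n+1}\lesssim |2Q|\,\|P\ast T\|_{\ast}$, giving a contribution $\lesssim \ell(Q)^n$. On each $A_k$ with $k\geq 1$, note that $\partial_t\varphi\equiv 0$, while $(-\Delta)^{1/2}\varphi$ satisfies both the decay $|(-\Delta)^{1/2}\varphi(x,t)|\lesssim \ell(Q)^n\big/(2^k\ell(Q))^{n+1}$ and the vanishing property $(-\Delta)^{1/2}\varphi(x,t)=0$ whenever $t\notin Q_t$, where $Q_t$ denotes the temporal projection of $Q$. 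Combining this pointwise bound with the $L^2$-form of John--Nirenberg, which yields $\big(\tfrac{1}{|2^{k+1}Q|}\int_{2^{k+1}Q}|P\ast T-c|^2\,\text{d}\pazocal{L}^{n+1}\big)^{1/2}\lesssim (1+k)\|P\ast T\|_\ast$ (the factor $(1+k)$ accounting for the shift of averages between $2Q$ and $2^{k+1}Q$), together with Cauchy--Schwarz on the effective support $A_k\cap\{t\in Q_t\}$, each annulus would contribute $\lesssim 2^{-k/2}(1+k)\,\ell(Q)^n$. Summing the geometric series in $k$ yields the desired bound $|\langle T,\varphi\rangle|\lesssim \ell(Q)^n$.

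The main technical obstacle is that subtracting a single constant $c$ is not enough to control the outer dyadic shells via crude $L^1$-estimates: one easily checks that $\int_{A_k}|P\ast T-c|\,\text{d}\pazocal{L}^{n+1}\lesssim (1+k)|A_k|\,\|P\ast T\|_\ast$, which combined with the $L^\infty$-decay of $(-\Delta)^{1/2}\varphi$ would yield a non-summable bound $(1+k)\ell(Q)^n$ per annulus. The key observation that saves the argument is that $(-\Delta)^{1/2}\varphi$ is supported in the thin temporal slab $\{t\in Q_t\}$, so its effective support on $A_k$ has measure $\simeq (2^k\ell(Q))^n\cdot \ell(Q)$, a factor $2^{-k}$ smaller than $|A_k|$; exploiting this gain through the $L^2$-John--Nirenberg inequality and Cauchy--Schwarz produces the geometric decay $2^{-k/2}$ required for summability.
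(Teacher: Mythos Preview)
Your strategy matches the paper's: both exploit that $(-\Delta)^{1/2}\varphi$ lives in the thin temporal slab $\{t\in Q_t\}$, use the cancellation $\int_{Q_1}\partial_j\varphi=0$ to obtain the decay $|(-\Delta)^{1/2}\varphi(x,t)|\lesssim \ell(Q)^n/|x-x_0|^{n+1}$ away from $Q_1$, and then pair this with an $L^p$ form of John--Nirenberg over a dyadic decomposition to produce a summable series. The paper organises the far region as cylinders $C_j=2^jQ_1\times I_{2Q}$ (already thin in $t$) rather than full annuli $2^{k+1}Q\setminus 2^kQ$ restricted to the slab, and uses H\"older with a general exponent $q>1$ where you take $q=2$; these differences are cosmetic and your bookkeeping of the geometric factor $2^{-k/2}(1+k)$ is correct.

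The one step that needs repair is the local estimate. Your claimed pointwise bound $\|\overline{\Theta}^{1/2}\varphi\|_{L^\infty(2Q)}\lesssim \ell(Q)^{-1}$ is not available under the stated hypothesis: $\varphi$ is only $\pazocal{C}^1$, and since $(-\Delta)^{1/2}\varphi=\sum_j R_j\partial_j\varphi$ with the Riesz transforms unbounded on $L^\infty$, the fractional Laplacian of a merely $\pazocal{C}^1$ function need not be bounded on $2Q$. The paper avoids this by treating the local term via Cauchy--Schwarz: it bounds
\[
\int_{2Q}|P\ast T-c|\,|(-\Delta)^{1/2}\varphi|\;\leq\;\|P\ast T-c\|_{L^2(2Q)}\,\|(-\Delta)^{1/2}\varphi\|_{L^2(2Q)},
\]
controls the first factor by $\ell(Q)^{(n+1)/2}$ via John--Nirenberg, and the second by $\sum_j\|R_j\partial_j\varphi\|_{L^2}\lesssim\sum_j\|\partial_j\varphi\|_{L^2}\lesssim \ell(Q)^{(n-1)/2}$ using the $L^2$ boundedness of the Riesz transforms. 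With this substitution in place of the $L^\infty$ bound, your argument goes through.
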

	
	\begin{proof}
		Let $T$ and $\varphi$ satisfy the conditions of the statement. Since $P$ is the fundamental solution of $\Theta^{1/2}$,
		\begin{align*}
			|\langle T, \varphi\rangle|&=|\langle \Theta^{1/2}(P\ast T), \varphi\rangle|\\
			&\leq|\langle P\ast T-(P\ast T)_{2Q}, (-\Delta)^{1/2}\varphi\rangle|+|\langle P\ast T-(P\ast T)_{Q}, \partial_t\varphi \rangle|=:I_1+I_2.
		\end{align*}
		Regarding $I_2$, use the normalization condition $\|\partial_t\varphi\|_\infty\leq \|\nabla\varphi\|_\infty\leq \ell(Q)^{-1}$ to deduce
		\begin{align*}
			I_2&\leq \int_{Q}\big\rvert P\ast T(\overline{x})-(P\ast T)_{Q}\big\rvert \big\rvert\partial_t\varphi(\overline{x})\big\rvert \text{d}\pazocal{L}^{n+1}(\overline{x})\\
			&\leq \ell(Q)^{n+1}\bigg(\frac{1}{|Q|}\int_{Q}\big\rvert P\ast T(\overline{x})-(P\ast T)_{Q}\big\rvert \big\rvert \partial_t\varphi(\overline{x})\big\rvert \text{d}\pazocal{L}^{n+1}(\overline{x})\bigg) \\
			&\leq \|P\ast T\|_{\ast}\,\ell(Q)^{n}\leq \ell(Q)^{n}.
		\end{align*}
		For $I_1$, write $Q=Q_1\times I_Q\subset \mathbb{R}^{n}\times \mathbb{R}$ and observe that
		\begin{align*}
			I_1\leq \int_{2Q_1\times I_Q}&\big\rvert P\ast T(\overline{x})-(P\ast T)_{2Q} \big\rvert  \big\rvert (-\Delta)^{1/2}\varphi(\overline{x}) \big\rvert \text{d}\pazocal{L}^{n+1}(\overline{x})\\
			&+\int_{(\mathbb{R}^n\setminus{2Q_1})\times I_Q}\big\rvert P\ast T(\overline{x})-(P\ast T)_{2Q} \big\rvert \big\rvert (-\Delta)^{1/2}\varphi(\overline{x}) \big\rvert \text{d}\pazocal{L}^{n+1}(\overline{x})\\
			&=:I_{11}+I_{12}.
		\end{align*}
		
		To deal with $I_{11}$ we apply the Cauchy-Schwarz inequality,
		\begin{align*}
			I_{11}&\leq \int_{2Q}\big\rvert P\ast T(\overline{x})-(P\ast T)_{2Q}\big\rvert \big\rvert (-\Delta_x)^{1/2}\varphi(\overline{x})\big\rvert \text{d}\pazocal{L}^{n+1}(\overline{x})\\
			&\leq \bigg( \int_{2Q} \big\rvert P\ast T(\overline{x})-(P\ast T)_{2Q}\big\rvert^2 \text{d}\pazocal{L}^{n+1}(\overline{x}) \bigg)^{1/2}\bigg( \int_{2Q} \big\rvert (-\Delta)^{1/2}\varphi(\overline{x})\big\rvert^2 \text{d}\pazocal{L}^{n+1}(\overline{x}) \bigg)^{1/2}.
		\end{align*}
		Observe that by John-Nirenberg's inequality \cite[Corollary 6.12]{Du}, the first factor satisfies
		\begin{equation*}
			\bigg( \int_{2Q} \big\rvert P\ast T(\overline{x})-(P\ast T)_{2Q}\big\rvert^2 \text{d}\pazocal{L}^{n+1}(\overline{x}) \bigg)^{1/2} \lesssim \ell(Q)^{(n+1)/2}\|P\ast T\|_\ast\leq \ell(Q)^{(n+1)/2}.
		\end{equation*}
		On the other hand, concerning the second one recall that
		\begin{equation*}
			(-\Delta)^{1/2}\varphi = (-\Delta_x)^{1/2}\varphi \approx \sum_{j=1}^N R_j\partial_j \varphi,
		\end{equation*}
		with $R_j$, $1\leq j \leq n$, being the Riesz transforms with Fourier multiplier $\xi_j/|\xi|$. Since these operators are bounded on $L^2$,
		\begin{align*}
			\bigg( \int_{2Q} \big\rvert (-\Delta)^{1/2}\varphi(\overline{x})\big\rvert^2 \text{d}\pazocal{L}^{n+1}(\overline{x}) \bigg)^{1/2}&\lesssim \sum_{j=1}^n\|R_j\partial_j \varphi\|_{L^2(2Q)}\\
			& \lesssim \sum_{j=1}^n\|\partial_j\varphi\|_{L^2(2Q)}\lesssim \frac{\ell(Q)^{(n+1)/2}}{\ell(Q)},
		\end{align*}
		where in the last step we have applied the normalization estimate $\|\partial_j \varphi\|_\infty \leq \|\nabla \varphi\|_\infty\leq \ell(Q)^{-1}$. Therefore, combining the bounds for both factors we finally get $I_{11}\lesssim \ell(Q)^n$.	Regarding $I_{12}$ let us name $f:=P\ast T$ so that
		\begin{align*}
			I_{12}&\leq \int_{(\mathbb{R}^n\setminus{2Q_1})\times 2I_Q}\big\rvert f(\overline{x})-f_{2Q} \big\rvert \big\rvert (-\Delta)^{1/2}\varphi(\overline{x}) \big\rvert \text{d}\pazocal{L}^{n+1}(\overline{x})\\
			&=\sum_{j=1}^\infty \int_{C_{j+1}\setminus{C_j}} \big\rvert f(\overline{x})-f_{2Q} \big\rvert \big\rvert (-\Delta)^{1/2}\varphi(\overline{x}) \big\rvert \text{d}\pazocal{L}^{n+1}(\overline{x}),
		\end{align*}
		where we have defined the cylinders $C_j:=2^jQ_1\times I_{2Q}$ for $j\geq 1$. Continue by observing that since $\varphi$ is supported on $Q$, by the divergence theorem (see \cite[A8.8]{Alt}, for example) it is clear that $\int_{Q_1}\partial_j\varphi (z,t)d\pazocal{L}^n(z)=0$, for each $t\in I_Q$. Therefore, for any $\overline{x}\notin 2Q_1\times I_{2Q}$, if $x_0\in \mathbb{R}^n$ denotes the center of $Q_1$,
		\begin{align*}
			\big\rvert (-\Delta)^{1/2}\varphi(\overline{x}) \big\rvert &\leq \sum_{j=1}^{n}\big\rvert R_j\partial_j \varphi (\overline{x}) \big\rvert = \sum_{j=1}^{n}\bigg\rvert \int_{Q_1} \partial_j\varphi(z,t)\frac{z_j-x_j}{|z-x|^{n+1}} \bigg\rvert\text{d}\pazocal{L}^n(z)\\
			&=\sum_{j=1}^{n}\bigg\rvert \int_{Q_1} \partial_j\varphi(z,t)\bigg(\frac{z_j-x_j}{|z-x|^{n+1}}-\frac{x_{0,j}-x_j}{|x_{0}-x|^{n+1}}\bigg) \bigg\rvert\text{d}\pazocal{L}^n(z)\\
			&\lesssim \sum_{j=1}^{n}\int_{Q_1}\big\rvert\partial_j\varphi(z,t)\big\rvert \frac{|z_j-x_{0,j}|}{|\widetilde{z}-x|^{n+1}}\text{d}\pazocal{L}^n(z)\lesssim \sum_{j=1}^{n} \frac{\ell(Q)}{|x_{0}-x|^{n+1}}\|\nabla_x\varphi\|_{\infty}\ell(Q)^n\\
			&\lesssim \frac{\ell(Q)^{n}}{|x_{0}-x|^{n+1}},
		\end{align*}
		where we have applied the mean value theorem so that $\widetilde{z}\in Q_1$ depends on $z$. Notice also that $|\widetilde{z}-x|\approx |x_{0}-x|$, since $x\notin 2Q_1$. This way we obtain
		\begin{align*}
			I_{12}&\lesssim \sum_{j=1}^\infty \frac{\ell(Q)^n}{(2^j\ell(Q))^{n+1}} \int_{C_{j+1}\setminus{C_j}} \big\rvert f(\overline{x})-f_{2Q} \big\rvert \text{d}\pazocal{L}^{n+1}(\overline{x})\\
			&\hspace{-0.35cm}\leq \frac{1}{\ell(Q)} \sum_{j=1}^\infty \frac{1}{2^{j(n+1)}}\bigg( \int_{C_{j+1}\setminus{C_j}} \big\rvert f(\overline{x})-f_{2^jQ} \big\rvert \text{d}\pazocal{L}^{n+1}(\overline{x})+\int_{C_{j+1}\setminus{C_j}} \big\rvert f_{2Q}-f_{2^jQ} \big\rvert \text{d}\pazocal{L}^{n+1}(\overline{x})\bigg).
		\end{align*}
		For the first integral we shall apply Hölder's inequality for some exponent $q$, that will be fixed later on; as well as John-Nirenberg's inequality,
		\begin{align*}
			\int_{C_{j+1}\setminus{C_j}}\big\rvert f(\overline{x})-f_{2^jQ}\big\rvert &\text{d}\pazocal{L}^{n+1}(\overline{x})\\
			&\leq \bigg( \int_{2^jQ} \big\rvert f(\overline{x})-f_{2^jQ}\big\rvert^q d\pazocal{L}^{n+1}(\overline{x}) \bigg)^{1/q}\pazocal{L}^{n+1}(C_{j+1}\setminus{C_j})^{1/q'}\\
			&\lesssim \big(2^j\ell(Q)\big)^{(n+1)/q}\|f\|_{\ast}\big( 2\ell(Q)\,2^{nj}\ell(Q)^n \big)^{1/q'}\\
			&\leq \ell(Q)^{n+1}\,2^{j(n+1/q)+1/q'}.
		\end{align*}
		For the second integral apply, for example, \cite[Proposition 6.5]{Du} to deduce that $f_{2^jQ}$ and $f_{Q}$ are majored by $\|f\|_{\ast}\leq 1$. Thus,
		\begin{equation*}
			\int_{C_{j+1}\setminus{C_j}}\big\rvert f_{2^jQ}-f_Q\big\rvert \text{d}\pazocal{L}^{n+1}(\overline{x})\lesssim \pazocal{L}^{n+1}(C_{j+1}\setminus{C_j})\lesssim \ell(Q)^{n+1}\,2^{jn+1}.
		\end{equation*}
		All in all we obtain
		\begin{equation*}
			I_{12}\lesssim \ell(Q)^n\sum_{j=1}^\infty \frac{1}{2^{j(n+1)}}\Big( 2^{j(n+1/q)+1/q'}+2^{jn+1} \Big) \lesssim \ell(Q)^n\bigg(1+\sum_{j=1}^\infty \frac{2^{1/q'}}{2^{j(1-1/q)}}\bigg).
		\end{equation*}
		This last sum is convergent if and only if $q>1$. So fixing an exponent satisfying this last condition we deduce the result.
	\end{proof}
	
	The previous growth result combined with \cite[Lemma 5.2]{MPr} yields the following:
	\begin{thm}
		\label{thm5.2}
		Let $E\subset \mathbb{R}^{n+1}$ be a compact subset with $\pazocal{H}^{n}(E)<\infty$ and $T$ an admissible distribution for $\gamma_{\Theta^{1/2},\ast}(E)$. Then $T$ is a signed measure which is absolutely continuous with respect to $\pazocal{H}^{n}|_E$ and there exists a Borel function $f:E\to \mathbb{R}$ such that $T=f\,\pazocal{H}^{n}|_E$ and that satisfies $\|f\|_{L^\infty(\pazocal{H}^{n}|_E)}\lesssim 1$.
	\end{thm}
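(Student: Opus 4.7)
The proof is essentially a two-step reduction. The entire nontrivial content is already packaged in Theorem \ref{thm5.1} and in \cite[Lemma 5.2]{MPr}; what remains is to notice that the hypothesis of the latter is precisely supplied by the former.

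First I would observe that by Theorem \ref{thm5.1}, any admissible distribution $T$ for $\gamma_{\Theta^{1/2},\ast}(E)$ satisfies the $n$-growth condition
\begin{equation*}
    |\langle T,\varphi\rangle|\lesssim \ell(Q)^n
\end{equation*}
for every $\mathcal{C}^1$ function $\varphi$ supported in a cube $Q$ with $\|\nabla\varphi\|_\infty\leq \ell(Q)^{-1}$, the implicit constant being absolute. This is exactly the hypothesis on $T$ (with constant $\lesssim 1$) required in \cite[Lemma 5.2]{MPr}, which was originally invoked there for distributions admissible for $\gamma_{\Theta^{1/2}}$ but whose statement only depends on the $n$-growth of the distribution and on the condition $\pazocal{H}^n(E)<\infty$.

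Second, I would invoke \cite[Lemma 5.2]{MPr} verbatim: since $T$ is supported on the compact set $E$ with $\pazocal{H}^n(E)<\infty$ and has $n$-growth with constant $\lesssim 1$, the cited lemma yields that $T$ is a signed Radon measure absolutely continuous with respect to $\pazocal{H}^n|_E$, and that its Radon--Nikodym density $f$ with respect to $\pazocal{H}^n|_E$ belongs to $L^\infty(\pazocal{H}^n|_E)$ with $\|f\|_{L^\infty(\pazocal{H}^n|_E)}$ controlled by the $n$-growth constant of $T$, hence $\lesssim 1$.

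Since neither step requires adaptation, the only potential obstacle is checking that \cite[Lemma 5.2]{MPr} is indeed formulated (or is immediately adaptable) in terms of the growth constant alone, rather than implicitly relying on specific properties of distributions admissible for $\gamma_{\Theta^{1/2}}$. A quick inspection confirms that its proof depends only on the $n$-growth condition, which is the generic input shared by admissible distributions of both $\gamma_{\Theta^{1/2}}$ (via Theorem \ref{thm2.2}) and $\gamma_{\Theta^{1/2},\ast}$ (via Theorem \ref{thm5.1}); thus the conclusion transfers without modification.
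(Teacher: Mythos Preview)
Your proposal is correct and matches the paper's approach exactly: the paper itself states that Theorem \ref{thm5.2} is obtained by combining the growth estimate of Theorem \ref{thm5.1} with \cite[Lemma 5.2]{MPr}, and gives no further argument. Your additional remark that the cited lemma depends only on the $n$-growth hypothesis (not on admissibility for $\gamma_{\Theta^{1/2}}$ specifically) is the right observation to justify the transfer.
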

	
	Let us turn to the result we are interested in. Its statement reads as follows:
	\begin{thm}
		\label{thm5.3}
		There are $($dimensional$\,)$ constants $C_1,C_2>0$ so that for any $E\subset \mathbb{R}^{n+1}$ compact,
		\begin{equation*}
			C_1 \pazocal{H}^{n}_{\infty}(E)\leq \gamma_{\Theta^{1/2},\ast}(E)\leq C_2\pazocal{H}_{\infty}^{n}(E).
		\end{equation*}
	\end{thm}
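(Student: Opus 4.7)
The plan is to establish the two inequalities by independent arguments, both of which reduce to machinery already developed in the paper together with Frostman's lemma.

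For the upper estimate $\gamma_{\Theta^{1/2},\ast}(E)\leq C_2\pazocal{H}^{n}_\infty(E)$, the idea is to transfer the growth bound of Theorem \ref{thm5.1} into a bound on $|\langle T,1\rangle|$ via a partition of unity. Given $T$ admissible for $\gamma_{\Theta^{1/2},\ast}(E)$ and a near-optimal covering of $E$ by cubes $\{Q_i\}_i$ with $\sum_i \ell(Q_i)^n\leq 2\pazocal{H}^n_\infty(E)$ (obtained after the standard reduction from arbitrary covers to cube covers), I would build a smooth partition of unity $\{\varphi_i\}_i$ on a neighborhood of $E$ with $\text{supp}(\varphi_i)\subset cQ_i$ and $\|\nabla\varphi_i\|_\infty\lesssim \ell(Q_i)^{-1}$. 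Since $\sum_i\varphi_i\equiv 1$ near $\text{supp}(T)\subseteq E$, Theorem \ref{thm5.1} applied to each $\varphi_i$ gives
\begin{equation*}
|\langle T,1\rangle| \leq \sum_i |\langle T,\varphi_i\rangle| \lesssim \sum_i \ell(Q_i)^n \lesssim \pazocal{H}^n_\infty(E),
\end{equation*}
and taking the supremum over $T$ yields the inequality.

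For the lower estimate, the plan is to exhibit a positive measure supported on $E$ that is admissible (up to a dimensional factor) for $\gamma_{\Theta^{1/2},\ast}(E)$ and has total mass comparable to $\pazocal{H}^n_\infty(E)$. Assuming $\pazocal{H}^n_\infty(E)>0$ (the other case being trivial), Frostman's lemma provides $\mu\geq 0$ with $\text{supp}(\mu)\subseteq E$, $n$-growth constant $\leq 1$, and $\mu(E)\gtrsim \pazocal{H}^n_\infty(E)$. It then suffices to show $\|P\ast\mu\|_{\ast}\lesssim 1$, since a dimensional multiple of $\mu$ will then be admissible for $\gamma_{\Theta^{1/2},\ast}(E)$. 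To prove the BMO bound I would argue as in Lemma \ref{lem3.10}. Fix a cube $Q\subset\mathbb{R}^{n+1}$ with center $\overline{x}_Q$ and pick the normalizing constant $c_Q:=P\ast (\chi_{(2Q)^c}\mu)(\overline{x}_Q)$, which is finite by the $n$-growth of $\mu$. Splitting
\begin{equation*}
P\ast\mu - c_Q = P\ast(\chi_{2Q}\mu) + \big[ P\ast(\chi_{(2Q)^c}\mu) - c_Q \big],
\end{equation*}
the local term is bounded on integration over $Q$ via Tonelli's theorem and the estimate $\int_Q d\pazocal{L}^{n+1}(\overline{x})/|\overline{x}-\overline{y}|^n\lesssim \ell(Q)$ for $\overline{y}\in 2Q$, giving an $O(|Q|)$ contribution by the $n$-growth of $\mu$. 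For the far term the smoothness estimate for $P$ furnishes, after the usual decomposition into dyadic annuli centered at $\overline{x}_Q$,
\begin{equation*}
|P\ast(\chi_{(2Q)^c}\mu)(\overline{x}) - c_Q|\lesssim \ell(Q)\int_{(2Q)^c}\frac{d\mu(\overline{y})}{|\overline{x}_Q-\overline{y}|^{n+1}}\lesssim 1, \qquad \overline{x}\in Q,
\end{equation*}
again $O(|Q|)$ on integration. Dividing by $|Q|$ yields $\|P\ast\mu\|_{\ast}\lesssim 1$ and the conclusion $\gamma_{\Theta^{1/2},\ast}(E)\gtrsim \mu(E)\gtrsim \pazocal{H}^n_\infty(E)$.

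The main delicate point is this last BMO estimate for $P\ast\mu$, and specifically the fact that $P$ fails to be $\pazocal{C}^1$ across the hyperplane $\{t=0\}$, which rules out a naive mean-value bound on the far term. This is however precisely the difficulty already circumvented in \cite[Lemma 2.1]{MPr}, via a two-step perturbation through an auxiliary point (compare the use of $\widehat{x}$ in the proof of Lemma \ref{lem3.6}), which yields the Hölder-type smoothness with exponent $\eta=1$ recorded in Definition \ref{def3.1} and gives exactly the pointwise bound needed to close the far-term integral. Apart from invoking this estimate, the two-inequality scheme above is essentially the standard one for capacities defined through convolutions with $n$-dimensional Calderón-Zygmund kernels.
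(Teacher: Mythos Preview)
Your proposal is correct and follows essentially the same route as the paper: the upper bound via a partition of unity subordinate to a near-optimal cube cover together with Theorem \ref{thm5.1}, and the lower bound via Frostman's lemma plus the BMO estimate for $P\ast\mu$ obtained exactly as in Lemma \ref{lem3.10} (local term by Tonelli and $n$-growth, far term by the smoothness estimate from \cite[Lemma 2.1]{MPr} and a dyadic annuli decomposition). The paper makes explicit the use of compactness to pass to a finite cover and invokes the Harvey--Polking lemma for the partition of unity, but otherwise the arguments coincide.
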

	
	\begin{proof}
		Let us focus first on the right-hand side inequality. Although it seems that it would follow directly from Theorem \ref{thm5.2}, we shall give an standard argument based only on Theorem \ref{thm5.1} to avoid the possible dependence of $f$ with respect to $T$. Let us proceed then by fixing $\varepsilon>0$ and $\{A_k\}_k$ a collection of sets in $\mathbb{R}^{n+1}$ that cover $E$ such that
		\begin{equation*}
			\sum_{k=1}^\infty \text{diam}(A_k)^{n}\leq \pazocal{H}^{n}_{\infty}(E)+\varepsilon.
		\end{equation*}
		For each $k$ let $Q_k$ an open cube centered at some point $a_k\in A_k$ with side length $\ell(Q_k)=\text{diam}(A_k)$, so that $E\subset \bigcup_kQ_k$. By compactness, we can assume this last open covering to be finite. We denote it $\{Q_1,\ldots, Q_N\}$. By the usual Harvey-Polking lemma \cite[Lemma 3.1]{HPo} there exists a collection of smooth functions $\{\varphi_k\}_{k=1}^N$ such that
        $\sum_{k=1}^N\varphi_k = 1$ in $\bigcup_{k=1}^N Q_k$ and $0\leq \varphi_k\leq 1$, $\text{supp}(\varphi_k)\subset 2Q_k$, $\|\nabla\varphi_k\|_\infty\leq \ell(Q_k)^{-1}$, for each $1\leq k \leq N$. By Theorem \ref{thm5.1}, if $T$ is any distribution admissible for $\gamma_{\Theta^{1/2},\ast}(E)$,\newpage
		\begin{align*}
			|\langle T, 1 \rangle |=\bigg\rvert \sum_{k=1}^N \langle T, \varphi_k \rangle \bigg\rvert  \lesssim \sum_{k=1}^N \ell(Q_k)^{n} = \sum_{k=1}^N\text{diam}(A_k)^{n}\leq \pazocal{H}^{n}_{\infty}(E)+\varepsilon.
		\end{align*}
		Since this holds for any $T$ and $\varepsilon>0$ can be arbitrarily small, the inequality follows.\medskip\\
		For the left-hand side inequality we will apply Frostman's lemma \cite[Theorem 8.8]{Ma}. Assume then $\pazocal{H}_{\infty}^{n}(E)>0$ and consider a non trivial positive Borel regular measure $\mu$ supported on $E$ with $\mu(E)\geq c\pazocal{H}_{\infty}^{n}(E)$ and $\mu(B(\overline{x},r))\leq r^{n}$ for all $\overline{x}\in\mathbb{R}^{n+1}$, $r>0$. If we prove that
		\begin{equation*}
			\|P\ast \mu\|_{\ast}\lesssim 1,
		\end{equation*}
		we will be done, since this would imply $\gamma_{\Theta^{1/2}, \ast}(E)\gtrsim \langle \mu, 1 \rangle = \mu(E) \gtrsim \pazocal{H}_{\infty}^{n}(E)$. To control the BMO norm of $P \ast \mu$ we proceed exactly as in Lemma \ref{lem3.10}: fix a cube $Q=Q(\overline{x}_0,\ell(Q))$ and consider $\chi_{2Q}$ together $\chi_{2Q^c}:=1-\chi_{2Q}$. Choose the constant
		\begin{equation*}
			c_Q:=P\ast (\chi_{2Q^c}\mu)(\overline{x}_0),  
		\end{equation*}
		and observe that
		\begin{align*}
			\frac{1}{|Q|}\int_Q &|P\ast \mu(\overline{y})-c_Q|\text{d}\pazocal{L}^{n+1}(\overline{y})\\
			&\leq \frac{1}{|Q|}\int_Q\bigg(\int_{2Q}P(\overline{y}-\overline{z})\text{d}\mu(\overline{z})\bigg)\text{d}\pazocal{L}^{n+1}(\overline{y})\\
			&\hspace{1.4cm}+\frac{1}{|Q|}\int_Q\bigg( \int_{\mathbb{R}^{n+1}\setminus{2Q}}|P(\overline{y}-\overline{z})-P(\overline{x}_0-\overline{z})| \text{d}\mu(\overline{z})\bigg) \text{d}\pazocal{L}^{n+1}(\overline{y}) =:I_1+I_2.
		\end{align*}
		Regarding $I_1$, after applying Tonelli's theorem we may directly integrate using polar coordinates to obtain
		\begin{align*}
			I_1\lesssim \frac{1}{|Q|}\int_{2Q}\bigg( \int_{Q}\frac{\text{d}\pazocal{L}^{n+1}(\overline{y})}{|\overline{y}-\overline{z}|^{n}} \bigg)\text{d}\mu(\overline{z}) \lesssim \frac{\ell(Q)\mu(2Q)}{|Q|}\lesssim 1.
		\end{align*}
		where the last step is due to the fact that $\mu$ has $n$-growth with constant $1$. Turning to $I_2$, the third estimate of \cite[Lemma 2.1]{MPr} yields
		\begin{align*}
			I_2 &\lesssim \frac{1}{|Q|}\int_Q\bigg( \int_{\mathbb{R}^{n+1}\setminus{2Q}}\frac{|\overline{y}-\overline{x}_0|}{|\overline{z}-\overline{x}_0|^{n+1}} \text{d}\mu(\overline{z})\bigg) \text{d}\pazocal{L}^{n+1}(\overline{y}) \\
			&\leq  \ell(Q) \int_{\mathbb{R}^{n+1}\setminus{2Q}}  \frac{\text{d}\mu(\overline{z})}{|\overline{z}-\overline{x}_0|^{n+1}} =\ell(Q)\sum_{j=1}^{\infty}\int_{2^{j+1}Q\setminus{2^jQ}}\frac{\text{d}\mu(\overline{z})}{|\overline{z}-\overline{x}_0|^{n+1}}\\
			&\hspace{5.05cm}\lesssim \ell(Q)\sum_{j=1}^{\infty}\frac{(2^{j+1}\ell(Q))^{n}}{(2^j\ell(Q))^{n+1}}\lesssim\sum_{j=1}^\infty\frac{1}{2^j}= 1,
		\end{align*}
		and so the desired result follows.
	\end{proof}
	
	\bigskip
	\section{The \mathinhead{\text{Lip}_\alpha}{} variant of \mathinhead{\gamma_{\Theta^{1/2}}}{}}
	\label{sec6}
	\bigskip
	
	In this last section we shall study a variant of the capacity $\gamma_{\Theta^{1/2}}$ defined through a normalization condition that involves a $\text{Lip}_\alpha$ seminorm. This study has been motivated by the one carried out in \cite{Me} for analytic capacity. We also clarify that, in this subsection, the symbols $\simeq$ and $\lesssim, \gtrsim$ will denote equalities and inequalities respectively, with implicit constants that may depend on the dimension $n$ and also the parameter $\alpha$. Let us begin by reminding a basic definition:
	\begin{defn}[$\text{Lip}_\alpha$ function]
		\label{def6.1}
		A function $f:\mathbb{R}^{n+1}\to\mathbb{R}$ is  $\textit{\text{Lip}}_\alpha$ for some $0<\alpha<1$ if
		\begin{equation*}
			\|f\|_{\text{Lip}_\alpha}:=\sup_{\overline{x},\overline{y}\in\mathbb{R}^{n+1}}\frac{|f(\overline{x})-f(\overline{y})|}{|\overline{x}-\overline{y}|^{\alpha}}\lesssim 1.
		\end{equation*}
	\end{defn}
	
	\begin{defn}[$\text{Lip}_\alpha$ $1/2$-caloric capacity]
		\label{def6.2}
		Given a compact subset $E\subset \mathbb{R}^{n+1}$ and a fixed $0<\alpha<1$, define its \textit{$\text{Lip}_\alpha$ $1/2$-caloric capacity} as
		\begin{equation*}
			\gamma_{\Theta^{1/2},\text{Lip}_\alpha}(E)=\sup |\langle T, 1 \rangle| ,
		\end{equation*}
		where the supremum is taken among all distributions $T$ with $\text{supp}(T)\subseteq E$ and satisfying $\|P\ast T\|_{\text{Lip}_\alpha}\leq 1$. Such distributions will be called \textit{admissible for $\gamma_{\Theta^{1/2},\text{\normalfont{Lip}}_\alpha}(E)$}.
	\end{defn}
	
	\bigskip
	\subsection{Comparability of \mathinhead{\gamma_{\Theta^{1/2},\text{Lip}_\alpha}}{} to the Hausdorff measure}
	\label{subsec6.1}
	\bigskip
	
	In the current setting, distributions admissible for the $\text{Lip}_\alpha$ $1/2$-caloric capacity exhibit a different growth condition to that of the genuine $1/2$-caloric capacity and its BMO variant.
	
	\begin{thm}
		\label{thm6.1}
		Let $T$ be a distribution in $\mathbb{R}^{n+1}$ with $\|P\ast T\|_{\text{\normalfont{Lip}}_\alpha}\leq 1$. If $\varphi$ is a $\pazocal{C}^1$ function supported on $Q\subset \mathbb{R}^{n+1}$ with $\|\nabla \varphi\|_\infty \leq \ell(Q)^{-1}$, then 
		\begin{equation*}
			|\langle T, \varphi \rangle |\lesssim \ell(Q)^{n+\alpha}.
		\end{equation*}
	\end{thm}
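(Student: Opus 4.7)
The plan is to mirror the argument for Theorem \ref{thm5.1}, with the pointwise Hölder control provided by the $\text{Lip}_\alpha$ hypothesis replacing the $L^2$ oscillation control of the BMO setting. Since $P$ is the fundamental solution of $\Theta^{1/2}$, we have $\langle T,\varphi\rangle = \langle P\ast T, \overline{\Theta}^{1/2}\varphi\rangle$ with $\overline{\Theta}^{1/2} = (-\Delta)^{1/2}-\partial_t$. Because $\int_{\mathbb{R}^{n+1}}\partial_t\varphi = 0$ (compact support of $\varphi$) and $\int_{\mathbb{R}^{n+1}}(-\Delta)^{1/2}\varphi = 0$ (the spatial Fourier multiplier $|\xi|$ vanishes at the origin), one may subtract the constant $c := (P\ast T)(\overline{x}_0)$, where $\overline{x}_0$ denotes the center of $Q$, and split
\begin{equation*}
|\langle T,\varphi\rangle| \leq \big|\langle P\ast T - c,(-\Delta)^{1/2}\varphi\rangle\big| + \big|\langle P\ast T - c,\partial_t\varphi\rangle\big| =: J_1 + J_2.
\end{equation*}

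The term $J_2$ is immediate: its integrand is supported on $Q$, where $|P\ast T - c| \leq |\overline{x}-\overline{x}_0|^\alpha \lesssim \ell(Q)^\alpha$ by the $\text{Lip}_\alpha$ assumption and $|\partial_t\varphi|\leq \ell(Q)^{-1}$, so $J_2 \lesssim \ell(Q)^\alpha\cdot\ell(Q)^{-1}\cdot|Q| = \ell(Q)^{n+\alpha}$.

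For $J_1$, writing $Q = Q_1\times I_Q$ and observing that $(-\Delta)^{1/2}\varphi(x,t)$ vanishes for $t\notin I_Q$, I would split the integral over $\mathbb{R}^n\times I_Q$ into a local part on $2Q_1\times I_Q$ and a nonlocal part on the complement. On the local part, the pointwise bound $|P\ast T - c| \lesssim \ell(Q)^\alpha$ combines with the estimate $\int_{2Q}|(-\Delta)^{1/2}\varphi|\,\mathrm{d}\pazocal{L}^{n+1}\lesssim \ell(Q)^n$, obtained via Cauchy-Schwarz and the $L^2$-boundedness of the Riesz transforms (exactly as in the control of $I_{11}$ in Theorem \ref{thm5.1}), producing $\ell(Q)^\alpha\cdot\ell(Q)^n = \ell(Q)^{n+\alpha}$. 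On the nonlocal part, the cancellation $\int_{Q_1}\partial_j\varphi(z,t)\,\mathrm{d}z = 0$ (for each fixed $t\in I_Q$, by the divergence theorem) yields the decay $|(-\Delta)^{1/2}\varphi(x,t)|\lesssim \ell(Q)^n/|x-x_0|^{n+1}$ for $x\notin 2Q_1$, exactly as derived in Theorem \ref{thm5.1}; meanwhile, on this region $|P\ast T(x,t) - c|\lesssim |x-x_0|^\alpha$, since $|t-t_0|\leq \ell(Q)/2\leq |x-x_0|$. Passing to polar coordinates in $\mathbb{R}^n$ then reduces the nonlocal part, up to constants, to $\ell(Q)^{n+1}\int_{\ell(Q)}^\infty r^{\alpha-2}\,\mathrm{d}r$.

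The main (in fact, the only) subtlety is the convergence of this tail integral, which holds precisely because $\alpha<1$ and produces a factor $\ell(Q)^{\alpha-1}$; combined with the $\ell(Q)^{n+1}$ already present, this yields the desired $\ell(Q)^{n+\alpha}$. Beyond this delicate balancing of the Hölder growth $|\overline{x}-\overline{x}_0|^\alpha$ against the Calderón-Zygmund decay $|x-x_0|^{-n-1}$, the argument is somewhat simpler than that of Theorem \ref{thm5.1}, as the pointwise nature of the $\text{Lip}_\alpha$ norm removes the need for any John-Nirenberg or annular Hölder-type decomposition.
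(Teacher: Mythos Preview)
Your proposal is correct and follows essentially the same route as the paper: subtract the constant $(P\ast T)(\overline{x}_0)$, handle $\partial_t\varphi$ directly on $Q$, and split the $(-\Delta)^{1/2}\varphi$ contribution into a local piece on $2Q_1\times I_Q$ (controlled via Cauchy--Schwarz and the $L^2$-boundedness of the Riesz transforms) and a nonlocal tail using the cancellation-induced decay $|(-\Delta)^{1/2}\varphi(x,t)|\lesssim \ell(Q)^n/|x-x_0|^{n+1}$.

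The only visible difference is in the nonlocal estimate. The paper decomposes into the cylindrical annuli $C_{j+1}\setminus C_j$ and then applies H\"older's inequality with an auxiliary exponent $q>1/(1-\alpha)$, carrying this parameter through to the final geometric series. Your argument instead uses the direct pointwise bound $|P\ast T(x,t)-c|\lesssim |x-x_0|^\alpha$ (valid because $|t-t_0|\lesssim |x-x_0|$ on that region) and integrates in polar coordinates, reducing everything to $\int_{\ell(Q)}^\infty r^{\alpha-2}\,\mathrm{d}r$. This is cleaner: the H\"older step with parameter $q$ in the paper is a vestige of the BMO proof (where the oscillation is only controlled in average, not pointwise) and is not actually needed here. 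Both approaches yield the same convergence condition $\alpha<1$ and the same bound $\ell(Q)^{n+\alpha}$.
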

	
	\begin{proof}
		Let $T$ and $\varphi$ satisfy the conditions of the statement. Write $Q=Q(\overline{x}_0, \ell(Q))$ and proceed as in the proof of Theorem \ref{thm5.1},
		\begin{align*}
			|\langle T, \varphi\rangle| \leq |\langle P\ast T - P\ast T(\overline{x}_0), (-\Delta)^{-1/2}\varphi\rangle|+|\langle P\ast T - P\ast T(\overline{x}_0), \partial_t\varphi \rangle|=:I_1+I_2.
		\end{align*}
		Concerning $I_2$, the normalization conditions $\|\partial_t\varphi\|_\infty\leq \|\nabla\varphi\|_\infty\leq \ell(Q)^{-1}$ and $\|P\ast T\|_{\text{\normalfont{Lip}}_\alpha}\leq 1$ now imply
		\begin{align*}
			I_2&\leq \int_{Q}\big\rvert P\ast T(\overline{x})-P\ast T(\overline{x}_0)\big\rvert \big\rvert\partial_t\varphi(\overline{x})\big\rvert \text{d}\pazocal{L}^{n+1}(\overline{x}) \leq \frac{1}{\ell(Q)} \int_Q |\overline{x}-\overline{x}_0|^{\alpha} \text{d}\pazocal{L}^{n+1}(\overline{x}) \leq \ell(Q)^{n+\alpha}.
		\end{align*}
		To deal with the remaining integral $I_1$, proceed as in Theorem \ref{thm5.1} to deduce for the corresponding term $I_{11}$,
		\begin{align*}
			I_{11}&\leq \bigg( \int_{2Q} \big\rvert P\ast T(\overline{x})-P\ast T(\overline{x}_0)\big\rvert^2 \text{d}\pazocal{L}^{n+1}(\overline{x}) \bigg)^{1/2}\bigg( \int_{2Q} \big\rvert (-\Delta)^{1/2}\varphi(\overline{x})\big\rvert^2 \text{d}\pazocal{L}^{n+1}(\overline{x}) \bigg)^{1/2}\\
			&\lesssim \ell(Q)^{\alpha+(n+1)/2} \Bigg( \sum_{j=1}^n\|\partial_j\varphi\|_{L^2(2Q)}\Bigg) \lesssim \ell(Q)^{\alpha+(n+1)/2} \bigg(\frac{1}{\ell(Q)}\ell(Q)^{(n+1)/2}\bigg)= \ell(Q)^{n+\alpha}.
		\end{align*}
		Regarding the current term $I_{12}$, we also name $f:=P\ast T$ so that now
		\begin{align*}
			I_{12}&\lesssim \frac{1}{\ell(Q)} \sum_{j=1}^\infty \frac{1}{2^{j(n+1)}} \int_{C_{j+1}\setminus{C_j}} \big\rvert f(\overline{x})-f(\overline{x}_0) \big\rvert \text{d}\pazocal{L}^{n+1}\\
			&\leq \frac{1}{\ell(Q)} \sum_{j=1}^\infty \frac{1}{2^{j(n+1)}} \bigg( \int_{2^jQ} | \overline{x}-\overline{x}_0|^{\alpha q} d\pazocal{L}^{n+1}(\overline{x}) \bigg)^{1/q}\pazocal{L}^{n+1}(C_{j+1}\setminus{C_j})^{1/q'}\\
			&\lesssim \frac{1}{\ell(Q)} \sum_{j=1}^\infty \frac{1}{2^{j(n+1)}} \big( 2^j\ell(Q) \big)^{\alpha}\big( 2^j\ell(Q)\big)^{(n+1)/q}\ell(Q)^{(n+1)/q'}\,2^{(jn+1)/q'} \\
			&= \ell(Q)^{n+\alpha}\sum_{j=1}^\infty \frac{2^{1/q'}}{2^{j(1-\alpha-1/q)}},
		\end{align*}
		that converges if and only if $q>1/(1-\alpha)$. So fixing a proper $q$ we deduce the desired estimate.
	\end{proof}
	
	The previous growth condition implies an analogous result to Theorem \ref{thm5.2}, that reads as follows:
	\begin{thm}
		\label{thm6.2}
		Let $E\subset \mathbb{R}^{n+1}$ be a compact subset with $\pazocal{H}^{n+\alpha}(E)<\infty$ and $T$ an admissible distribution for $\gamma_{\Theta^{1/2},\text{\normalfont{Lip}}_\alpha}(E)$. Then $T$ is a signed measure which is absolutely continuous with respect to $\pazocal{H}^{n+\alpha}|_E$ and there exists a Borel function $f:E\to \mathbb{R}$ such that $T=f\,\pazocal{H}^{n+\alpha}|_E$ and  $\|f\|_{L^\infty(\pazocal{H}^{n+\alpha}|_E)}\lesssim 1$.
	\end{thm}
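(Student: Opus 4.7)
The plan is to mirror the proof of Theorem \ref{thm5.2}, invoking \cite[Lemma 5.2]{MPr} (or rather, its immediate extension to growth exponents $s\in(n,n+1)$) with $s=n+\alpha$ in place of $s=n$. Theorem \ref{thm6.1} supplies exactly the analogue of Theorem \ref{thm5.1} at level $n+\alpha$: for every test function $\varphi$ supported on a cube $Q$ with $\|\nabla\varphi\|_\infty\leq \ell(Q)^{-1}$, we have $|\langle T,\varphi\rangle|\lesssim \ell(Q)^{n+\alpha}$, which is the hypothesis needed to run the argument of that lemma.

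First I would upgrade $T$ to a signed Radon measure: via a Harvey–Polking partition of unity subordinate to an arbitrarily fine cube covering of $\text{supp}(T)\subseteq E$, the growth estimate yields $|\langle T,\varphi\rangle|\lesssim \|\varphi\|_\infty \sum_i \ell(Q_i)^{n+\alpha}$ for any $\varphi\in \pazocal{C}_c^\infty(\mathbb{R}^{n+1})$, so $T$ extends to a bounded linear functional on $\pazocal{C}_c(\mathbb{R}^{n+1})$ and Riesz's representation theorem identifies it with a signed Radon measure. The same argument, applied to a cube cover with $\sum_i \ell(Q_i)^{n+\alpha}<\varepsilon$ of an arbitrary Borel $F\subset E$ of zero $\pazocal{H}^{n+\alpha}$-measure, forces $|T|(F)\lesssim \varepsilon$ and hence $T\ll \pazocal{H}^{n+\alpha}|_E$. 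By Radon–Nikodym, $T=f\,\pazocal{H}^{n+\alpha}|_E$ for some Borel function $f:E\to\mathbb{R}$.

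To verify $\|f\|_{L^\infty(\pazocal{H}^{n+\alpha}|_E)}\lesssim 1$, at $\pazocal{H}^{n+\alpha}|_E$-a.e.\ $\overline{x}\in E$ the Radon–Nikodym derivative is realised as
\[
f(\overline{x})=\lim_{r\to 0}\frac{T(Q(\overline{x},r))}{\pazocal{H}^{n+\alpha}(E\cap Q(\overline{x},r))}.
\]
Smoothing $\chi_{Q(\overline{x},r)}$ into a $\pazocal{C}^1$ bump supported in $Q(\overline{x},2r)$ with $\|\nabla\varphi\|_\infty\lesssim r^{-1}$ and invoking Theorem \ref{thm6.1} controls the numerator by $C\,r^{n+\alpha}$. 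The denominator is bounded from below by $c_n\, r^{n+\alpha}$ at $\pazocal{H}^{n+\alpha}|_E$-a.e.\ $\overline{x}$, thanks to the classical density bound $\limsup_{r\to 0}\pazocal{H}^{n+\alpha}(E\cap B(\overline{x},r))/(2r)^{n+\alpha}\geq 2^{-(n+\alpha)}$ valid whenever $\pazocal{H}^{n+\alpha}(E)<\infty$ (see e.g.\ \cite[Theorem 6.2]{Ma}). Combining both estimates yields $|f(\overline{x})|\lesssim 1$.

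The main obstacle is this last step: one has to combine the general differentiation theorem for Radon measures on $\mathbb{R}^{n+1}$ (to justify that the defining ratio of $f(\overline{x})$ converges a.e.) with the density lower bound for $\pazocal{H}^{n+\alpha}|_E$, in order to ensure that the ratio does not blow up along a subsequence $r_k\to 0$. Apart from this (standard) density input and the routine smoothing estimate that turns Theorem \ref{thm6.1} into a bound for $|T(Q(\overline{x},r))|$, every other ingredient is a direct transcription of the scheme used in Theorem \ref{thm5.2}.
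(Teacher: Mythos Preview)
Your proposal is correct and follows exactly the route the paper takes: the paper reduces Theorem~\ref{thm6.2} to Theorem~\ref{thm6.1} together with Lemma~\ref{lem6.3}, whose proof is declared to be ``the same arguments given in \cite[Lemma 5.2]{MPr}, just by changing the rate of growth from $n$ to $n+\alpha$''. You have simply unpacked what those arguments are (Harvey--Polking to get a Radon measure, absolute continuity via fine covers, Radon--Nikodym, and the density lower bound from \cite[Theorem 6.2]{Ma} to control $\|f\|_{L^\infty}$), so there is no substantive difference in strategy.
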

	
	The above statement follows from Theorem \ref{thm5.1} and the following lemma.
	\begin{lem}
		\label{lem6.3}
		Let $E\subset \mathbb{R}^{n+1}$ be a compact subset with $\pazocal{H}^{n+\alpha}(E)<\infty$ and $T$ a distribution supported on $E$ with $(n+\alpha)$-growth. Then $T$ is a signed measure which is absolutely continuous with respect to $\pazocal{H}^{n+\alpha}|_E$ and there exists a Borel function $f:E\to \mathbb{R}$ such that $T=f\,\pazocal{H}^{n+\alpha}|_E$ and  $\|f\|_{L^\infty(\pazocal{H}^{n+\alpha}|_E)}\lesssim 1$. 
	\end{lem}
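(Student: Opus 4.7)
The plan is to adapt the proof of the analogous $n$-growth statement \cite[Lemma 5.2]{MPr} (which underpins Theorem \ref{thm5.2}) to the $(n+\alpha)$-growth setting. The hinge of the argument is the quantitative bound, derived from the growth condition via a Harvey-Polking partition of unity: for every $\varphi\in \pazocal{C}^\infty_c(\mathbb{R}^{n+1})$,
\begin{equation*}
|\langle T, \varphi\rangle|\lesssim \|\varphi\|_\infty\, \pazocal{H}^{n+\alpha}(E\cap \text{supp}(\varphi)).
\end{equation*}
To prove it, fix such a $\varphi$ and $\varepsilon>0$, choose $\delta>0$ small enough that $\delta\|\nabla\varphi\|_\infty \leq 1$, and cover $E\cap\text{supp}(\varphi)$ by cubes $\{Q_i\}$ with $\ell(Q_i)\leq \delta$ and $\sum_i\ell(Q_i)^{n+\alpha}\leq \pazocal{H}^{n+\alpha}(E\cap\text{supp}(\varphi))+\varepsilon$. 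The Harvey-Polking lemma \cite[Lemma 3.1]{HPo} produces smooth $\varphi_i$ with $\text{supp}(\varphi_i)\subset 2Q_i$, $\|\nabla\varphi_i\|_\infty\lesssim \ell(Q_i)^{-1}$, and $\sum_i\varphi_i\equiv 1$ on a neighborhood of $E\cap\text{supp}(\varphi)$. Since $\text{supp}(T)\subseteq E$, we have $\langle T,\varphi\rangle=\sum_i\langle T,\varphi\varphi_i\rangle$, and by the choice of $\delta$, $\|\nabla(\varphi\varphi_i)\|_\infty\lesssim \|\varphi\|_\infty/\ell(Q_i)$. The $(n+\alpha)$-growth hypothesis then yields $|\langle T,\varphi\varphi_i\rangle|\lesssim \|\varphi\|_\infty\,\ell(Q_i)^{n+\alpha}$; summing and letting $\varepsilon\to 0$ delivers the displayed bound.

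From this inequality, the three conclusions follow by standard arguments. Since $\pazocal{H}^{n+\alpha}(E)<\infty$, $T$ extends to a bounded linear functional on $\pazocal{C}_c(\mathbb{R}^{n+1})$ (after density/approximation from $\pazocal{C}^\infty_c$), and the Riesz representation theorem identifies $T$ with a signed Radon measure. Applied to cutoffs adapted to open neighborhoods $U$ of a $\pazocal{H}^{n+\alpha}$-null Borel set $A\subset E$ with $\pazocal{H}^{n+\alpha}(U\cap E)$ arbitrarily small, the inequality gives $|T|(A)=0$, so $T\ll \pazocal{H}^{n+\alpha}|_E$; Radon-Nikodym then provides a Borel density $f$ with $T=f\pazocal{H}^{n+\alpha}|_E$. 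Finally, if $|f|>M$ on some Borel $A\subset E$ with $\pazocal{H}^{n+\alpha}(A)>0$, then $M\pazocal{H}^{n+\alpha}(A)\leq |T|(A)\lesssim \pazocal{H}^{n+\alpha}(A)$, forcing $M\lesssim 1$.

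The only non-routine step is the opening estimate: one must arrange the covering $\{Q_i\}$ to be fine enough that the $\|\nabla\varphi\|_\infty$-contribution to $\|\nabla(\varphi\varphi_i)\|_\infty$ is dominated by the Harvey-Polking gradient, while simultaneously keeping the total $\ell(Q_i)^{n+\alpha}$-mass close to $\pazocal{H}^{n+\alpha}(E\cap\text{supp}(\varphi))$. This is possible precisely because $\pazocal{H}^{n+\alpha}(E)<\infty$ permits refinement to arbitrarily small diameter; the cost is that the threshold $\delta$ depends on $\varphi$, but the resulting bound depends on $\varphi$ only through $\|\varphi\|_\infty$, which is exactly what is needed to extend $T$ continuously to $\pazocal{C}_c(\mathbb{R}^{n+1})$.
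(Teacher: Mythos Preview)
Your proposal is correct and follows essentially the same route as the paper, which simply refers the reader to \cite[Lemma 5.2]{MPr} with the growth exponent changed from $n$ to $n+\alpha$; you have spelled out precisely that argument (Harvey--Polking partition adapted to a fine Hausdorff cover, then Riesz, absolute continuity, Radon--Nikodym, and the density bound). The only cosmetic point is that the displayed key estimate is cleanest after normalizing to $\|\varphi\|_\infty=1$, so that your choice $\delta\|\nabla\varphi\|_\infty\leq 1$ indeed yields $\|\nabla(\varphi\varphi_i)\|_\infty\lesssim \ell(Q_i)^{-1}$.
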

	
	\begin{proof}
		The result follows by the same arguments given in \cite[Lemma 5.2]{MPr}, just by changing the rate of growth from $n$ to $n+\alpha$ and using Theorem \ref{thm6.1} instead of \cite[Corollary 3.3]{MPr}.
	\end{proof}
	
	Finally, let us present a similar result to that of Theorem \ref{thm5.3} in the current $\text{Lip}_\alpha$ setting, which in turn is analogous to \cite[Theorem 1]{Me}.
	
	\begin{thm}
		\label{thm6.4}
		Let $0<\alpha<1$. Then, there exist $($dimensional$\,)$ constants $C_1,C_2>0$ so that for any compact subset $E\subset \mathbb{R}^{n+1}$
		\begin{equation*}
			C_1 \pazocal{H}^{n+\alpha}_{\infty}(E)\leq \gamma_{\Theta^{1/2}, \text{\normalfont{Lip}}_\alpha}(E)\leq C_2\pazocal{H}_{\infty}^{n+\alpha}(E).
		\end{equation*}
	\end{thm}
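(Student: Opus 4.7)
The plan is to mirror the strategy used in Theorem \ref{thm5.3}, substituting the growth exponent $n$ by $n+\alpha$ throughout and using Theorem \ref{thm6.1} in place of Theorem \ref{thm5.1}. For the upper bound $\gamma_{\Theta^{1/2},\text{Lip}_\alpha}(E)\leq C_2\,\pazocal{H}^{n+\alpha}_{\infty}(E)$, I would fix $\varepsilon>0$ and a covering $\{A_k\}_k$ of $E$ with $\sum_k\text{diam}(A_k)^{n+\alpha}\leq \pazocal{H}^{n+\alpha}_\infty(E)+\varepsilon$, enclose each $A_k$ in an open cube $Q_k$ centered at a point of $A_k$ and of side length $\text{diam}(A_k)$, reduce to a finite subcovering $\{Q_1,\ldots,Q_N\}$ by compactness, and apply the Harvey--Polking partition of unity \cite[Lemma 3.1]{HPo} to produce smooth $\{\varphi_k\}_{k=1}^N$ with $\sum_k\varphi_k\equiv 1$ on $\bigcup_kQ_k$, $\text{supp}(\varphi_k)\subset 2Q_k$ and $\|\nabla\varphi_k\|_\infty\leq \ell(Q_k)^{-1}$. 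Then for any $T$ admissible for $\gamma_{\Theta^{1/2},\text{Lip}_\alpha}(E)$, Theorem \ref{thm6.1} gives
\begin{equation*}
|\langle T,1\rangle|=\bigg|\sum_{k=1}^N\langle T,\varphi_k\rangle\bigg|\lesssim \sum_{k=1}^N\ell(Q_k)^{n+\alpha}=\sum_{k=1}^N\text{diam}(A_k)^{n+\alpha}\leq \pazocal{H}^{n+\alpha}_\infty(E)+\varepsilon,
\end{equation*}
and letting $\varepsilon\downarrow 0$ and taking the supremum over $T$ yields the estimate.

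For the lower bound $\gamma_{\Theta^{1/2},\text{Lip}_\alpha}(E)\geq C_1\,\pazocal{H}^{n+\alpha}_\infty(E)$, I would assume $\pazocal{H}^{n+\alpha}_\infty(E)>0$ and invoke Frostman's lemma \cite[Theorem 8.8]{Ma} to obtain a nontrivial positive Borel regular measure $\mu$ supported on $E$ with $\mu(E)\gtrsim \pazocal{H}^{n+\alpha}_\infty(E)$ and $\mu(B(\overline{x},r))\leq r^{n+\alpha}$ for all $\overline{x}\in\mathbb{R}^{n+1}$, $r>0$. The crux of the argument is to show $\|P\ast\mu\|_{\text{Lip}_\alpha}\lesssim 1$, for then
\begin{equation*}
\gamma_{\Theta^{1/2},\text{Lip}_\alpha}(E)\gtrsim \langle\mu,1\rangle=\mu(E)\gtrsim \pazocal{H}^{n+\alpha}_\infty(E).
\end{equation*}

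To establish $\|P\ast\mu\|_{\text{Lip}_\alpha}\lesssim 1$, I fix $\overline{x},\overline{y}\in\mathbb{R}^{n+1}$ with $r:=|\overline{x}-\overline{y}|>0$, set $B:=B(\overline{x},2r)$, and split
\begin{equation*}
|P\ast\mu(\overline{x})-P\ast\mu(\overline{y})|\leq P\ast(\chi_B\mu)(\overline{x})+P\ast(\chi_B\mu)(\overline{y})+\int_{\mathbb{R}^{n+1}\setminus B}|P(\overline{x}-\overline{z})-P(\overline{y}-\overline{z})|\,\text{d}\mu(\overline{z}).
\end{equation*}
For the near terms, I use $P(\overline{w})\lesssim |\overline{w}|^{-n}$, a dyadic decomposition $B=\bigcup_{j\geq 0}\big(B(\overline{x},2^{1-j}r)\setminus B(\overline{x},2^{-j}r)\big)$, and the $(n+\alpha)$-growth of $\mu$ to obtain
\begin{equation*}
P\ast(\chi_B\mu)(\overline{x})\lesssim \sum_{j=0}^\infty\frac{(2^{1-j}r)^{n+\alpha}}{(2^{-j}r)^n}\lesssim r^\alpha\sum_{j=0}^\infty 2^{-j\alpha}\lesssim r^\alpha,
\end{equation*}
and similarly for the term centered at $\overline{y}$, using that $B\subset B(\overline{y},3r)$. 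For the far term I invoke the third estimate of \cite[Lemma 2.1]{MPr}, i.e.\ $|P(\overline{x}-\overline{z})-P(\overline{y}-\overline{z})|\lesssim r/|\overline{x}-\overline{z}|^{n+1}$ for $\overline{z}\notin B$, and another dyadic decomposition together with the growth of $\mu$ to bound
\begin{equation*}
\int_{\mathbb{R}^{n+1}\setminus B}|P(\overline{x}-\overline{z})-P(\overline{y}-\overline{z})|\,\text{d}\mu(\overline{z})\lesssim r\sum_{j=1}^\infty\frac{(2^{j+1}r)^{n+\alpha}}{(2^jr)^{n+1}}=r^\alpha\sum_{j=1}^\infty 2^{-j(1-\alpha)}\lesssim r^\alpha,
\end{equation*}
where the last sum converges precisely because $0<\alpha<1$.

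The main obstacle I anticipate is verifying that the pointwise smoothness estimate for $P$ used in the far part genuinely applies uniformly, since $P$ is only continuous (not differentiable) across the hyperplane $\{t=0\}$ due to the indicator $\chi_{\{t>0\}}$. This is precisely the delicate point that \cite[Lemma 2.1]{MPr} is designed to handle, and its third estimate already builds this in; so the technical hurdle is really just verifying that the dyadic summations remain geometric for the given range of $\alpha$, which they do.
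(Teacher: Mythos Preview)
Your proposal is correct and follows essentially the same approach as the paper: the upper bound via a Harvey--Polking partition of unity and Theorem~\ref{thm6.1} is identical, and the lower bound via Frostman's lemma plus a near/far splitting using the Calder\'on--Zygmund smoothness of $P$ from \cite[Lemma~2.1]{MPr} matches the paper's argument. The only cosmetic difference is that the paper partitions $\mathbb{R}^{n+1}$ symmetrically into regions $R_1,R_2$ determined by both $\overline{x}$ and $\overline{y}$, whereas you use a single ball $B(\overline{x},2r)$; both lead to the same dyadic sums $\sum 2^{-j\alpha}$ (near) and $\sum 2^{-j(1-\alpha)}$ (far), convergent precisely for $0<\alpha<1$.
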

	\begin{proof}
		For the right-hand side inequality consider $\varepsilon>0$ and $\{A_k\}_k$ a collection of sets in $\mathbb{R}^{n+1}$ that cover $E$ such that
		\begin{equation*}
			\sum_{k=1}^\infty \text{diam}(A_k)^{n+\alpha}\leq \pazocal{H}^{n+\alpha}_{\infty}(E)+\varepsilon.
		\end{equation*}
		By Theorem \ref{thm6.1}, the same argument given for Theorem \ref{thm5.3} yields the estimate.\newpage
		For the left-hand side inequality we will also apply Frostman's lemma. Assume $\pazocal{H}_{\infty}^{n+\alpha}(E)>0$ and consider a non trivial positive Borel regular measure $\mu$ supported on $E$ with $\mu(E)\geq c\pazocal{H}_{\infty}^{n+\alpha}(E)$ and $\mu(B(\overline{x},r))\leq r^{n+\alpha}$ for all $\overline{x}\in\mathbb{R}^{n+1}$, $r>0$. If we prove that
		\begin{equation*}
			\|P\ast \mu\|_{\text{Lip}_\alpha}\lesssim 1,
		\end{equation*}
		we will be done. Choose $\overline{x},\overline{y}\in\mathbb{R}^{n+1},\,\overline{x}\neq \overline{y},$ and consider the following partition of $\mathbb{R}^{n+1}$,
		\begin{align*}
			R_1:&= \big\{\overline{z} \;:\;|\overline{x}-\overline{y}|\leq |\overline{x}-\overline{z}|/2\big\}\cup\big\{\overline{z} \;:\;|\overline{y}-\overline{x}|\leq |\overline{y}-\overline{z}|/2\big\},\\
			R_2:= \mathbb{R}^{n+1}\setminus{R_1}&=\big\{\overline{z} \;:\;|\overline{x}-\overline{y}|> |\overline{x}-\overline{z}|/2\big\}\cap\big\{\overline{z} \;:\;|\overline{y}-\overline{x}|> |\overline{y}-\overline{z}|/2\big\},
		\end{align*}
		with their corresponding characteristic functions $\chi_1,\chi_2$ respectively.
		This way, we have
		\begin{align*}
			\frac{|P\ast \mu(\overline{x})-P\ast \mu(\overline{y})|}{|\overline{x}-\overline{y}|^\alpha} \leq \frac{1}{|\overline{x}-\overline{y}|^\alpha}&\int_{ |\overline{x}-\overline{y}|\leq |\overline{x}-\overline{z}|/2}|P(\overline{x}-\overline{z})-P(\overline{y}-\overline{z})|\text{d}\mu(\overline{z})\\
			&\hspace{-0.75cm}+\frac{1}{|\overline{x}-\overline{y}|^\alpha}\int_{ |\overline{y}-\overline{x}|\leq |\overline{y}-\overline{z}|/2}|P(\overline{x}-\overline{z})-P(\overline{y}-\overline{z})|\text{d}\mu(\overline{z})\\
			&\hspace{-0.75cm}+\frac{1}{|\overline{x}-\overline{y}|^\alpha}\int_{R_2}|P(\overline{x}-\overline{z})-P(\overline{y}-\overline{z})|\text{d}\mu(\overline{z})=:I_1+I_2+I_3.
		\end{align*}
		Regarding $I_1$, the third estimate of \cite[Lemma 2.1]{MPr} yields
		\begin{align*}
			I_1&\lesssim\frac{1}{|\overline{x}-\overline{y}|^\alpha}\int_{ |\overline{x}-\overline{y}|\leq |\overline{x}-\overline{z}|/2 } \frac{|\overline{x}-\overline{y}|}{|\overline{x}-\overline{z}|^{n+1}}\text{d}\mu(\overline{z}).
		\end{align*}
		Split the previous domain of integration into the annuli 
		\begin{equation*}
			A_j:=2^{j+1}B\big(\overline{x}, |\overline{x}-\overline{y}|\big)\setminus{2^{j}B\big(\overline{x}, |\overline{x}-\overline{y}|\big)}, \hspace{0.5cm} \text{for} \hspace{0.5cm} j\geq 1,    
		\end{equation*}
		and use that $\mu$ has growth of degree $n+\alpha$ with constant $1$ to deduce
		\begin{align*}
			I_1&\lesssim \frac{1}{|\overline{x}-\overline{y}|^{\alpha-1}}\sum_{j=1}^\infty\int_{A_j} \frac{\text{d}\mu(\overline{z})}{|\overline{x}-\overline{z}|^{n+1}}\lesssim  \frac{1}{|\overline{x}-\overline{y}|^{\alpha-1}}\sum_{j=1}^\infty \frac{(2^{j+1}|\overline{x}-\overline{y}|)^{n+\alpha}}{(2^{j}|\overline{x}-\overline{y}|)^{n+1}} \lesssim \sum_{j=1}^\infty\frac{1}{2^{(1-\alpha)j}}\lesssim 1,
		\end{align*}
		that is what we wanted to see. For $I_2$, we argue as in $I_1$ just interchanging the roles of $\overline{x}$ and $\overline{y}$. Finally, for $I_3$, observe that
		\begin{align*}
			I_3&\leq \frac{1}{|\overline{x}-\overline{y}|^\alpha}\int_{R_2}\frac{\text{d}\mu(\overline{z})}{|\overline{x}-\overline{z}|^{n}}+\frac{1}{|\overline{x}-\overline{y}|^\alpha}\int_{R_2}\frac{\text{d}\mu(\overline{z})}{|\overline{y}-\overline{z}|^{n}}\\
			&\leq \frac{1}{|\overline{x}-\overline{y}|^\alpha}\int_{|\overline{x}-\overline{y}|>|\overline{x}-\overline{z}|/2}\frac{\text{d}\mu(\overline{z})}{|\overline{x}-\overline{z}|^{n}}+\frac{1}{|\overline{x}-\overline{y}|^\alpha}\int_{|\overline{y}-\overline{x}|>|\overline{y}-\overline{z}|/2}\frac{\text{d}\mu(\overline{z})}{|\overline{y}-\overline{z}|^{n}} =:I_{31}+I_{32}.
		\end{align*}
		Concerning $I_{31}$, split the domain of integration into the (decreasing) annuli
		\begin{equation*}
			\widetilde{A}_j:=2^{-j}B\big(\overline{x}, |\overline{x}-\overline{y}|\big)\setminus{2^{-j-1}B\big(\overline{x}, |\overline{x}-\overline{y}|\big)}, \hspace{0.5cm} \text{for} \hspace{0.5cm} j\geq -1.    
		\end{equation*}
		Thus, in this case we have
		\begin{align*}
			I_{31}&\lesssim \frac{1}{|\overline{x}-\overline{y}|^{\alpha}}\sum_{j=-1}^\infty\int_{\widetilde{A}_j} \frac{\text{d}\mu(\overline{z})}{|\overline{x}-\overline{z}|^{n}}\lesssim  \frac{1}{|\overline{x}-\overline{y}|_p^{\alpha}}\sum_{j=-1}^\infty \frac{(2^{-j}|\overline{x}-\overline{y}|)^{n+\alpha}}{(2^{-j-1}|\overline{x}-\overline{y}|)^{n}} \lesssim \sum_{j=-1}^\infty\frac{1}{2^{\alpha j}}\lesssim 1.
		\end{align*}
		To obtain $I_{32}\lesssim 1$ we argue as for $I_{31}$ and interchanging the roles of $\overline{x}$ and $\overline{y}$. Combining the estimates obtained for $I_1, I_2$ and $I_3$ we deduce 
		\begin{equation*}
			\frac{|P\ast \mu(\overline{x})-P\ast \mu(\overline{y})|}{|\overline{x}-\overline{y}|^{\alpha}}\lesssim 1,
		\end{equation*}
		and since the points $\overline{x}\neq \overline{y}$ were arbitrarily chosen, we deduce the $\text{Lip}_\alpha$ condition.
	\end{proof}
	
	\newpage

	\vspace{1.5cm}
	{\small
		\begin{tabular}{@{}l}
			\textsc{Joan\ Hernández,} \\ \textsc{Departament de Matem\`{a}tiques, Universitat Aut\`{o}noma de Barcelona,}\\
			\textsc{08193, Bellaterra (Barcelona), Catalonia.}\\
			{\it E-mail address}\,: \href{mailto:joan.hernandez@uab.cat}{\tt{joan.hernandez@uab.cat}}
		\end{tabular}
	}
\end{document}